\documentclass[9pt]{article}
\usepackage{mathrsfs}
\usepackage{amsthm}
\usepackage{amssymb}
\usepackage{amsmath}
\usepackage{graphicx}
\usepackage{color}
\usepackage{amsfonts}
\usepackage{float}
\usepackage{cite}
\usepackage [latin1]{inputenc}
\usepackage[text={140mm,210mm},left=35mm,vmarginratio=1:1]{geometry}
\newtheorem{theorem}{Theorem}[section]

\newtheorem{lemma}[theorem]{Lemma}
\newtheorem{proposition}[theorem]{Proposition}

\newtheorem{corollary}[theorem]{Corollary}

\numberwithin{equation}{section}
\normalsize

\begin{document}
\title{\textbf{Invariance principles for additive functionals of voter models}}

\author{Xiaofeng Xue \thanks{\textbf{E-mail}: xfxue@bjtu.edu.cn \textbf{Address}: School of Mathematics and Statistics, Beijing Jiaotong University, Beijing 100044, China.}\\ Beijing Jiaotong University}

\date{}
\maketitle

\noindent {\bf Abstract:} In this paper, we prove an invariance principle for the additive functionals of a family of voter models, which include the nearest-neighbor cases on $d$-dimensional lattices for $d$ at least $5$ or on regular trees with degree at least $3$ and some long-range cases on lattices as special examples. The proof of our main result extends a martingale decomposition method, where the Kolmogorov backward equation and the duality relationship between the voter model and the coalescing random walk play the key roles.

\quad

\noindent {\bf Keywords:} voter model, additive functional, invariance principle, coalescing random walk, martingale decomposition.


\section{Introduction}\label{section one}

In this paper, we are concerned with the invariance principles, i.e., functional central limit theorems for the additive functionals of the voter models satisfying some common assumptions. We first review the definition of the voter model introduced in \cite{Clifford1973} and \cite{Holley1975} independently. We assume that $G$ is a countable set and $q: G\times G\rightarrow [0, +\infty)$ satisfies that
\[
\sup_{x\in G}\sum_{y\in G, y\neq x}q(x, y)<+\infty.
\]
The voter model $\{\eta_t\}_{t\geq 0}$ on $G$ with parameters $\{q(x,y)\}_{x, y\in G}$ is a continuous-time Markov process with state space $\{0, 1\}^G$ and generator $\mathcal{L}$ given by
\begin{equation}\label{equ 1.1 generator}
\mathcal{L}f(\eta)=\sum_{x\in G}\sum_{y\in G, y\neq x}q(x, y)\left(f(\eta^{x, y})-f(\eta)\right)
\end{equation}
for any $f: \{0, 1\}^G\rightarrow \mathbb{R}$ and $\eta\in \{0, 1\}^G$, where
\[
\eta^{x, y}(z)=
\begin{cases}
\eta(z) & \text{~if~}z\neq x,\\
\eta(y) & \text{~if~}z=x.
\end{cases}
\]
Intuitively speaking, in the voter model $1$ and $0$ are two opposite opinions of a topic. For each $x\in G$, $\eta(x)=1$ (resp. $\eta(x)=0$) means that $x$ is a supporter of $1$ (resp. $0$). For different $x, y\in G$, $x$ adopts the opinion of $y$ at rate $q(x, y)$. For a detailed survey of the basic properties of the voter model, readers could see Chapter 5 of \cite{Lig1985} and Part {\rm \uppercase\expandafter{\romannumeral2}} of \cite{Lig1999}.

For any $\eta\in \{0, 1\}^G$, we denote by $\mathbb{P}_\eta$ the probability measure of the voter model $\{\eta_t\}_{t\geq 0}$ starting from $\eta$. For any probability distribution $\mu$ on $\{0, 1\}^G$, we denote by $\mathbb{P}_\mu$ the probability measure of $\{\eta_t\}_{t\geq 0}$ with initial distribution $\mu$, i.e.,
\[
\mathbb{P}_\mu(\cdot)=\int_{\{0, 1\}^G}\mathbb{P}_\eta(\cdot)\mu(d\eta).
\]
We denote by $\mathbb{E}_\mu$ (resp. $\mathbb{E}_\eta$) the expectation with respect to $\mathbb{P}_\mu$ (resp. $\mathbb{P}_\eta$).

Now we introduce our assumptions of $q$ in this paper. We first assume that $q$ is symmetric, i.e.,
\begin{equation}\label{Assumption 1.2 symmetric}
q(x, y)=q(y, x)
\end{equation}
for all $x, y\in G$. Our further assumptions of $q$ are about the transition probabilities of the random walk on $G$ with transition rates $q$. We denote by $\{X_t\}_{t\geq 0}$ the continuous-time random walk on $G$ with generator $\mathcal{G}$ given by
\begin{equation}\label{equ generator of the random walk}
\mathcal{G}h(x)=\sum_{y\in G, y\neq x}q(x, y)\left(h(y)-h(x)\right)
\end{equation}
for any $h: G\rightarrow \mathbb{R}$ and $x\in G$. We denote by $\tilde{\mathbb{P}}$ the probability measure of $\{X_t\}_{t\geq 0}$ and by $\tilde{\mathbb{E}}$ the expectation with respect to $\tilde{\mathbb{P}}$. We denote by $\{p_t\}_{t\geq 0}$ the transition probabilities of $\{X_t\}_{t\geq 0}$, i.e.,
\[
p_t(x, y)=\tilde{\mathbb{P}}\left(X_t=y\Big|X_0=x\right)
\]
for any $x, y\in G$. We assume that $q$ makes
\begin{equation}\label{Assumption 1.4}
C_1:=\sup_{x, y\in G}\int_0^{+\infty}p_s(x, y)ds<+\infty,
\end{equation}
\begin{equation}\label{Assumption 1.5}
C_2:=\inf_{x\in G}\int_0^{+\infty}p_s(x, x)ds>0
\end{equation}
and
\begin{equation}\label{Assumption 1.6}
C_3:=\sup_{x, y\in G}\int_0^{+\infty}\int_0^{+\infty}p_{s_1+s_2}(x, y)ds_1ds_2<+\infty.
\end{equation}
Here we give some examples which satisfy Assumptions \eqref{Assumption 1.2 symmetric} and \eqref{Assumption 1.4}-\eqref{Assumption 1.6}.

\textbf{Example 1} \emph{Nearest-neighbor case on $\mathbb{Z}^d$ for $d\geq 5$}. When we let $G$ be the $d$-dimensional lattice $\mathbb{Z}^d$ for $d\geq 5$ and $q$ be defined as
\[
q(x, y)=1_{\{\|x-y\|_1=1\}}
\]
for any $x, y\in \mathbb{Z}^d$, where $\|x\|_1$ is the $l_1$-norm of $x$ and $1_A$ is the indicator function of the set $A$, our model reduces to the nearest-neighbor voter model on $\mathbb{Z}^d$ for $d\geq 5$. According to the local central limit theorem of the simple random walk on $\mathbb{Z}^d$ (see Theorem 2.1.3 in Chapter 2 of \cite{Lawler2010}), for any $x, y\in \mathbb{Z}^d$,
\[
p_t(x, y)\leq p_t(x, x)=\Theta(t^{-d/2}).
\]
Hence, Assumptions \eqref{Assumption 1.4}-\eqref{Assumption 1.6} all hold when $d\geq 5$.

\quad

\textbf{Example 2} \emph{Some long-range cases on $\mathbb{Z}^d$}. When we let $G$ be the $d$-dimensional lattice $\mathbb{Z}^d$ for $d\geq 1$ and $q$ be defined as
\[
q(x, y)=\|x-y\|_2^{-(d+\alpha)}
\]
for any $x, y\in \mathbb{Z}^d$, where $\|x\|_2$ is the $l_2$-norm of $x$ and $\alpha$ is a positive constant, our model reduces to the long-range voter model on $\mathbb{Z}^d$ introduced in \cite{Xue2025}. According to Lemma 1.1 of \cite{Xue2025}, which is the local central limit theorem of the long-range random walk on $\mathbb{Z}^d$, Assumptions \eqref{Assumption 1.4}-\eqref{Assumption 1.6} all hold when and only when
\[
(d, \alpha)\in \{1\}\times (0, 1/2)\bigcup \{2\}\times (0, 1)\bigcup \{3\}\times (0, 3/2) \bigcup \{4\}\times (0, 2]\bigcup \{5, 6, \ldots\}\times (0, +\infty).
\]

\quad

\textbf{Example 3} \emph{Nearest-neighbor case on regular trees}. We denote by $\mathcal{T}^d$ the regular tree where each vertex has degree $d+1$. For any $x, y\in \mathcal{T}^d$, we write $x\sim y$ when $y$ is one of $d+1$ neighbors of $x$. When we let $G=\mathcal{T}^d$ for $d\geq 2$ and $q$ be defined as
\[
q(x, y)=1_{\{x\sim y\}},
\]
our model reduces to the nearest-neighbor voter model on regular trees. By Equation (6) of \cite{Xue2026},
\[
p_t(x, y)\leq e^{-t(\sqrt{d}-1)^2}
\]
for any $x, y\in \mathcal{T}^d$ and hence Assumptions \eqref{Assumption 1.4}-\eqref{Assumption 1.6} all hold when $d\geq 2$.

\quad

Now we give the definition of the additive functional of the voter model. Throughout this paper, we are assume that $F: \{0, 1\}^G\rightarrow\mathbb{R}$ is a local function, i.e., there exist an integer $m\geq 1$, $x_1, x_2, \ldots, x_m\in G$ and $H: \{0, 1\}^m\rightarrow \mathbb{R}$ such that
\[
F(\eta)=H\left(\eta(x_1), \eta(x_2), \ldots, \eta(x_m)\right)
\]
for any $\eta\in \{0, 1\}^G$. When $\eta_0$ is distributed with probability measure $\mu$,  the centered additive functional process $\{\xi_t^F\}_{t\geq 0}$ is defined as
\begin{equation}\label{equ 1.7 additive functional}
\xi_t^F=\int_0^t\left(F(\eta_s)-\mathbb{E}_\mu F(\eta_s)\right)ds
\end{equation}
for any $t\geq 0$. When $F(\eta)=\eta(x)$ for some $x\in G$, $\xi_t^F$ reduces to the centered occupation time on $x$.

The investigations of the limit behaviors of the additive functionals of the voter models dates back to 1980s. In \cite{Cox1983}, Cox and Griffeath give the central limit theorem for the occupation time of the nearest-neighbor voter model on $\mathbb{Z}^d$ starting from a homogeneous Bernoulli product measure, which performs a dimension-dependent phase transition. Reference \cite{Cox1988} extends the main result in \cite{Cox1983} to cases where the initial distribution is a nontrivial invariant measure or the initial configuration $\eta_0$ is deterministic and has some special properties. Reference \cite{Xue2026b} extend the main result in \cite{Cox1983} to an invariance principle when $d\geq 3$ and gives a conjecture in the case $d=2$. Reference \cite{Xue2026c} further extend the main result in \cite{Cox1983} to the case where the initial Bernoulli product measure is not spatially homogeneous. Reference \cite{Xue2025} gives the invariance principles for the occupation times of the long-range voter models on $\mathbb{Z}^d$, which perform $(d, \alpha)$-dependent phase transitions. Reference \cite{Andjel1987} gives the law of large numbers for general additive functionals of the nearest-neighbor voter model on $\mathbb{Z}^d$ for $d\geq 3$ as an application of a pointwise ergodic theorem proved in the same paper. Reference \cite{Cox1988} extends the aforesaid law of large numbers to the case $d=2$. Inspired by above references, we investigate invariance principles of general additive functionals of the voter models. The main result of this paper shows that, under Assumptions \eqref{Assumption 1.2 symmetric} and \eqref{Assumption 1.4}-\eqref{Assumption 1.6}, the centered additive functional of the voter model, after proper scaling, converges weakly to a standard Brownian motion when $\eta_0$ is distributed with a homogeneous Bernoulli product measure or a nontrivial invariant measure. For a precise statement of our main result, see Section \ref{section two}.

\section{Main result} \label{section two}
In this section, we give our main result. For later use, we first introduce some notations and definitions. For given $p\in (0, 1)$, we denote by $\mu_p$ the Bernoulli product measure on $\{0, 1\}^G$ under which $\{\eta(x)\}_{x\in G}$ are independent and
\[
\mu_p\left(\eta: \eta(x)=1\right)=p
\]
for any $x\in G$. Theorem (5.8) of \cite{Holley1975} gives the following limit behavior of the voter model starting from $\mu_p$.

\begin{proposition}\label{Proposition 2.1}(Holley and Liggett, 1975). Let $p\in (0, 1)$. When $\eta_0$ is distributed with $\mu_p$, as $t\rightarrow +\infty$, $\eta_t$ converges weakly to an invariant distribution $\nu_p$ which satisfies that
\[
\mathbb{E}_{\nu_p}\eta_0(x)=p
\]
and
\[
\mathbb{E}_{\nu_p}\left(\eta_0(x)\eta_0(y)\right)=p\tilde{\mathbb{P}}\left(\tau_{x, y}<+\infty\right)+p^2\tilde{\mathbb{P}}\left(\tau_{x, y}=+\infty\right)
\]
for any $x, y\in G$, where
\[
\tau_{x, y}=\inf\{t\geq 0:~X_t^{x}=\hat{X}_t^y\}
\]
and $\{X_t^x\}_{t\geq 0}, \{\hat{X}_t^y\}_{t\geq 0}$ are independent copies of the random walk $X_t$ defined as in Section \ref{section one} with $X_0^x=x, \hat{X}_0^y=y$.
\end{proposition}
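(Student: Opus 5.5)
The plan is to prove the proposition through duality with coalescing random walks. Under the symmetry assumption \eqref{Assumption 1.2 symmetric}, the walk with rates $q$ started at $x$ has the same law as the walk with rates $q(x,\cdot)$, so there is no issue with reversing rates. Construct the voter model graphically. For each ordered pair $(x,y)$ we place a Poisson process of rate $q(x,y)$ of arrows, and at each arrow time $x$ copies $y$. Reading the arrows backward from time $t$ gives coalescing random walks $\{\hat X^{x,t}_s\}_{0\le s\le t}$, which jump from $z$ to $w$ at rate $q(z,w)$ and coalesce upon meeting. This yields the duality: for any finite $A\subseteq G$,
\[
\mathbb{P}_\eta\left(\eta_t\equiv 1 \text{ on } A\right)=\mathbb{P}\left(\eta\equiv 1 \text{ on } \{\hat X^{x,t}_t: x\in A\}\right).
\]
Integrating against $\mu_p$ gives $\mathbb{P}_{\mu_p}(\eta_t\equiv1\text{ on }A)=\mathbb{E}\,p^{|\zeta^A_t|}$, where $\zeta^A_t$ is the coalescing random walk system started from $A$.

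The next step is the limit $t\to\infty$. The number $|\zeta^A_t|$ is nonincreasing in $t$ and integer valued, so it converges almost surely to some $|\zeta^A_\infty|$. Hence $\mathbb{E}\,p^{|\zeta^A_t|}$ converges for every finite $A$ by bounded convergence. The functions $\eta\mapsto\prod_{x\in A}\eta(x)$ span a dense subalgebra of the continuous functions on the compact space $\{0,1\}^G$ by Stone--Weierstrass. So the laws of $\eta_t$ converge weakly to a measure $\nu_p$ characterized by $\nu_p(\eta\equiv1\text{ on }A)=\mathbb{E}\,p^{|\zeta^A_\infty|}$.

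To show $\nu_p$ is invariant, let $S_s$ denote the semigroup, which is Feller under the summability assumption on $q$. Then $\nu_pS_s=\lim_t \mu_pS_{t+s}=\nu_p$. In more detail, $\int S_sf\,d(\mu_pS_t)\to\int S_sf\,d\nu_p$ because $S_sf$ is continuous.

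The moments then follow. Take $A=\{x\}$: a single walker never coalesces, so $|\zeta^A|\equiv1$ and the mean is $p$. Take $A=\{x,y\}$ with $x\neq y$: before meeting, the two walkers move independently with rates $q$. So $|\zeta^A_\infty|=1$ exactly when two independent copies started at $x$ and $y$ ever meet, i.e.\ when $\tau_{x,y}<\infty$, and otherwise $|\zeta^A_\infty|=2$. This gives $p\,\tilde{\mathbb P}(\tau_{x,y}<\infty)+p^2\tilde{\mathbb P}(\tau_{x,y}=\infty)$. The case $x=y$ is consistent with this since $\tau_{x,x}=0$.

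The main obstacle is the rigorous construction of the graphical representation and the Feller property when $G$ is infinite and the rates are only uniformly summable. We must ensure that the backward paths from a finite set are well defined, i.e.\ finitely many jumps in finite time. This holds since the total jump rate out of each site is bounded by $\sup_x\sum_y q(x,y)$. I would take this from Liggett's construction (Chapter 5 of \cite{Lig1985}) rather than reprove it. The fact that coalescing walks make "independent until meeting" precise is a standard coupling, and I would state it briefly.
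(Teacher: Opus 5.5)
Your proposal is correct and follows essentially the paper's route: the paper, deferring to Holley--Liggett, derives $\mathbb{P}_{\mu_p}(\eta_t(x_i)=1\ \forall i)=\hat{\mathbb{E}}\,p^{{\rm card}\{\mathcal{X}_t^{x_1},\ldots,\mathcal{X}_t^{x_m}\}}$ from the duality in Proposition \ref{proposition 3.1 dual}, and gets the existence of $\nu_p$ from the fact that this cardinality is nonincreasing in $t$. You supply the steps the paper leaves implicit (coordinate products as a determining class, invariance via the Feller property, and the one- and two-point computations); your appeal to the symmetry of $q$ is harmless but not needed, since the backward walk from $x$ jumps to $y$ at rate $q(x,y)$, which is exactly the generator $\mathcal{G}$.
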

Proposition \ref{Proposition 2.1} is an application of the duality relationship between the voter model and the coalescing random walk, which we will recall later. As we have introduced in Section \ref{section one}, throughout this paper, we assume that $q$ satisfies \eqref{Assumption 1.2 symmetric} and makes \eqref{Assumption 1.4}-\eqref{Assumption 1.6} hold. Under Assumption \eqref{Assumption 1.2 symmetric}, we have that $p_t$ is symmetric. Then, under Assumption \eqref{Assumption 1.4},
\[
\tilde{\mathbb{E}}\left(\int_0^{+\infty}1_{\{X_t^x=\hat{X}_t^y\}}dt\right)=\int_0^{+\infty}p_{2t}(x, y)dt\leq \frac{C_1}{2}<+\infty
\]
by the Kolmogorov-forward equation. Consequently, according to the strong Markov property, there exist $x, y\in G$ such that $\tilde{\mathbb{P}}\left(\tau_{x, y}<+\infty\right)<1$ and hence $\nu_p$ is not a convex combination of $\delta_0$ and $\delta_1$ under Assumptions \eqref{Assumption 1.2 symmetric} and \eqref{Assumption 1.4}, where $\delta_a$ is the Dirac measure concentrated on the configuration where $\eta(x)=a$ for all $x\in G$.

Let $F:\{0, 1\}^G\rightarrow \mathbb{R}$ be the local function defined as in Section \ref{section one}. We have the following lemma about the asymptotic variance of the centered additive functional $\xi_t^F$.

\begin{lemma}\label{lemma asymptotic variance}
Let $p\in (0, 1)$, $F:\{0, 1\}^G\rightarrow \mathbb{R}$ be a local function, $\eta_0$ be distributed with $\nu_p$ and $\xi_t^F$ be defined as in \eqref{equ 1.7 additive functional} with $\mu=\nu_p$. Under Assumptions \eqref{Assumption 1.2 symmetric} and \eqref{Assumption 1.4}-\eqref{Assumption 1.6}, there exists $\sigma_F^2<+\infty$ such that
\[
\lim_{t\rightarrow+\infty}\frac{1}{t}{\rm Var}_{\nu_p}\left(\xi_t^F\right)=\sigma_F^2.
\]
\end{lemma}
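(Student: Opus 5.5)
Since $\nu_p$ is invariant, $\mathbb{E}_{\nu_p}F(\eta_s)$ does not depend on $s$. Writing $\bar F=F-\mathbb{E}_{\nu_p}F$, stationarity gives
\[
\frac{1}{t}{\rm Var}_{\nu_p}(\xi_t^F)=\frac{2}{t}\int_0^t (t-r)\,\rho(r)\,dr,\qquad \rho(r):={\rm Cov}_{\nu_p}\left(F(\eta_0),F(\eta_r)\right).
\]
It therefore suffices to prove $\int_0^{+\infty}|\rho(r)|dr<+\infty$. Then dominated convergence gives the limit $\sigma_F^2=2\int_0^{+\infty}\rho(r)dr<+\infty$. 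We may expand $H$ in the basis of products $\prod_{i\in A}\eta(x_i)$, $A\subseteq\{1,\ldots,m\}$. By bilinearity of the covariance, this reduces the problem to showing that for finite sets $A,B\subseteq G$ with $\eta_A:=\prod_{x\in A}\eta(x)$,
\[
\int_0^{+\infty}\left|{\rm Cov}_{\nu_p}\left(\eta_0^A,\eta_r^B\right)\right|dr<+\infty .
\]

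To estimate this covariance, I would use duality with coalescing random walks. Conditioning on $\eta_0$, we have $\mathbb{E}_{\eta}[\eta_r^B]=\mathbb{E}[\eta_{\zeta_r^B}]$, where $\zeta_r^B$ is the coalescing random walk system started from $B$. Also, $\nu_p$ is itself the limit of $\mu_p$ under the dynamics (Proposition \ref{Proposition 2.1}). Combining these, ${\rm Cov}_{\nu_p}(\eta_0^A,\eta_r^B)$ can be written as a limit as $u\to\infty$ of expressions $\mathbb{E}[p^{|\zeta_{u}^{A}\cup\zeta_{u+r}^{B}|}]-\mathbb{E}[p^{|\zeta_u^A|}]\mathbb{E}[p^{|\zeta_{u+r}^B|}]$. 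In the first expectation the two families evolve on the same graphical representation, so they are dependent. In the product the two families are independent and $p$ counts distinct particles. The covariance is then controlled by the probability that some walk started from $B$ meets, after time $r$ on its backward clock, some walk from the $A$-family. Equivalently, after running the $B$ walks for time $r$, at least one of them eventually coalesces with one of the $A$ walks. Comparing the coupled system with independent copies via a standard coupling, I expect a bound of the form
\[
\left|{\rm Cov}_{\nu_p}(\eta_0^A,\eta_r^B)\right|\le C\sum_{x\in A,\,y\in B}\int_r^{+\infty}\ p_{s}(x,y)\,ds\ \cdot\ (\text{const}),
\]
or more precisely a sum over pairs of $\tilde{\mathbb{P}}$ that independent walks from $x$ and $y$ meet after time $r$. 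Bounding the meeting probability by expected occupation, via the strong Markov property, gives a bound of the type $\int_r^{+\infty} p_{2s}(x,y)ds$ divided by $C_2$ (Assumption \eqref{Assumption 1.5}).

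Integrating in $r$ then gives $\int_0^\infty\int_r^\infty p_{2s}(x,y)\,ds\,dr\le C\,C_3<+\infty$ by Assumption \eqref{Assumption 1.6}. Assumption \eqref{Assumption 1.4} is used to control the intermediate hitting probabilities uniformly. The main obstacle is the coupling step: showing that the covariance is genuinely dominated by a \emph{late} meeting, after time $r$, between the two families. This requires a careful decomposition according to which pair meets first. We must also handle the fact that walks within the same family coalesce among themselves, which only reduces the particle count and can be dominated by a union bound over the finitely many pairs.
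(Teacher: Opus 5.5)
Your proposal is correct and follows essentially the paper's route: stationarity reduces the claim to $\int_0^{+\infty}|{\rm Cov}_{\nu_p}(F(\eta_r),F(\eta_0))|\,dr<+\infty$, and the paper's Lemma \ref{lemma 3.zero} gives exactly the bound you anticipate, $\frac{2\|H\|_\infty^2}{C_2}\sum_{i,j}\int_0^{+\infty}p_{s+r}(x_i,x_j)\,ds$, after which Assumption \eqref{Assumption 1.6} gives integrability in $r$. That lemma moves the lag $r$ onto the dual walks by the Markov property and couples the joint coalescing system with an independent copy up to the first cross-family meeting time, then takes a plain union bound over pairs, so the finer decomposition by which pair meets first is not needed.

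One small correction concerns your ``more precisely'' variant. Because the walks of one family are frozen during the first $r$ time units, the correct kernel is $\int_r^{+\infty}p_s(x,y)\,ds$, as in your first display. It is not $\int_r^{+\infty}p_{2s}(x,y)\,ds=\frac12\int_{2r}^{+\infty}p_s(x,y)\,ds$, which is smaller and fails as an upper bound in general (e.g.\ on regular trees), although either expression is integrable under \eqref{Assumption 1.6}.
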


The proof of Lemma \ref{lemma asymptotic variance} is given in Section \ref{section three}. Now we give the main result of this paper.

\begin{theorem}\label{theorem 2.3 main}
Let $T>0$, $p\in (0, 1)$, $F:\{0, 1\}^G\rightarrow \mathbb{R}$ be a local function, $\eta_0$ be distributed with $\mu_p$ or $\nu_p$ and $\xi_t^F$ be defined as in \eqref{equ 1.7 additive functional}. Under Assumptions \eqref{Assumption 1.2 symmetric} and \eqref{Assumption 1.4}-\eqref{Assumption 1.6}, the random process
$\left\{\frac{1}{\sqrt{N}}\xi_{tN}^F:~0\leq t\leq T\right\}$ converges weakly, with respect to the uniform topology of $C[0, T]$, to $\{\sigma_FB_t\}_{0\leq t\leq T}$ as $N\rightarrow+\infty$, where $\sigma_F^2$ is the same as that given in Lemma \ref{lemma asymptotic variance} and $\{B_t\}_{t\geq 0}$ is the standard Brownian motion starting from $0$.
\end{theorem}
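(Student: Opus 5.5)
We will prove Theorem \ref{theorem 2.3 main} by a martingale decomposition built from the Kolmogorov backward equation. Let $\mu\in\{\mu_p,\nu_p\}$ and write $S_t$ for the semigroup of the voter model. For $t\le TN$ we introduce the truncated corrector
\[
h_t(\eta)=\int_0^{t}\left(S_uF(\eta)-\mathbb{E}_\mu F(\eta_{u})\right)du ,
\]
or, more precisely, a time-dependent version $\phi_N(s,\eta)=\int_0^{TN-s}\big(S_uF(\eta)-\mathbb{E}_\mu F(\eta_{s+u})\big)du$. The backward equation $\partial_u S_uF=\mathcal{L}S_uF$ and Dynkin's formula for $\phi_N(s,\eta_s)$ then give
\[
\xi^F_{t}=\phi_N(0,\eta_0)-\phi_N(t,\eta_t)+M^N_t ,
\]
where $M^N$ is a martingale. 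Its quadratic variation is
\[
\langle M^N\rangle_t=\int_0^t\sum_{x,y}q(x,y)\big(\phi_N(s,\eta_s^{x,y})-\phi_N(s,\eta_s)\big)^2ds .
\]

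The key quantitative input is the duality with coalescing random walks. Write $F$ as a linear combination of products $\prod_{x\in A}\eta(x)$ with $A\subset\{x_1,\dots,x_m\}$. Then $S_u\prod_{x\in A}\eta(x)=\mathbb{E}\prod_{z\in \zeta^A_u}\eta(z)$, where $\zeta^A_u$ is the coalescing random walk system started from $A$. Consequently $\phi_N(s,\eta^{x,y})-\phi_N(s,\eta)$ is bounded by
\[
\int_0^\infty \mathbb{P}\left(x\in\zeta_u^A\right)du\le \sum_{a\in A}\int_0^\infty p_u(a,x)\,du ,
\]
which is uniformly bounded by $mC_1$ by \eqref{Assumption 1.4}. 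Summing the squares against $q(x,y)$, we use symmetry \eqref{Assumption 1.2 symmetric} and the identity $\sum_y q(x,y)\big(p_u(a,x)-p_u(a,y)\big)\cdots$, i.e.\ a Dirichlet-form computation. This reduces $\sum_{x,y}q(x,y)(\cdot)^2$ to double integrals $\int\int p_{s_1+s_2}(a,b)\,ds_1ds_2$, which are bounded by $C_3$ from \eqref{Assumption 1.6}. Hence $\mathbb{E}\langle M^N\rangle_{TN}=O(N)$ and the boundary terms satisfy $\sup_{t\le TN}|\phi_N(t,\eta_t)|/\sqrt N\to0$ in probability. For the supremum we will combine the $L^2$ bound with a discretization and the fact that $\phi_N$ is uniformly bounded in $L^2(\mu)$ by a constant independent of $N$. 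That constant is again controlled via duality, since covariances $\mathrm{Cov}_\mu(\eta(z),\eta(w))$ are bounded by coalescence probabilities, which are summable against $p$ by \eqref{Assumption 1.4}--\eqref{Assumption 1.6}.

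By the martingale functional CLT (jumps of $M^N/\sqrt N$ are $O(N^{-1/2})$ by the bound above), it then suffices to show
\[
\frac1N\langle M^N\rangle_{tN}\to\sigma_F^2t
\]
in probability for each $t$. The mean converges by Lemma \ref{lemma asymptotic variance}, since $\mathbb{E}\langle M^N\rangle_{tN}$ equals $\mathrm{Var}(\xi^F_{tN})$ up to $o(N)$ corrections. For $\mu_p$ we additionally use that $\eta_s\Rightarrow\nu_p$, with the same coalescing walk representation for the covariances, so that the limit constant is the same $\sigma_F^2$. The main obstacle is the variance, i.e.\ a law of large numbers for $\frac1N\int_0^{tN}G_N(s,\eta_s)\,ds$, where $G_N$ is the (essentially local, but with infinite range) quadratic-variation density. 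We plan to truncate $G_N$ to a finite window $\{x: \mathrm{dist}\le R\}$, with an error that is small uniformly in $N$ by the summability coming from $C_3$. We then apply the argument to the local function $G^{(R)}$ and bound $\mathrm{Var}\big(\int_0^{tN}G^{(R)}(\eta_s)ds\big)=O(N)=o(N^2)$. This variance bound again follows from duality: the covariance $\mathrm{Cov}(G^{(R)}(\eta_s),G^{(R)}(\eta_{s'}))$ is bounded by the probability that the coalescing walks from the two windows meet. That probability is at most $\sum\int p_{u}\,du$ tails, which is integrable in $|s-s'|$ by \eqref{Assumption 1.6} and the lower bound \eqref{Assumption 1.5}, which ensures non-degenerate normalization.
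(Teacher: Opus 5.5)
\textbf{Finite-dimensional part.} This part of your proposal is essentially the paper's argument. Your time-dependent corrector solves $(\partial_s+\mathcal{L})\phi_N=-(F-\mathbb{E}_\mu F(\eta_s))$ exactly; the paper instead uses $\int_0^N S(s)F\,ds$ and must additionally discard $N^{-1/2}\int_0^{tN}S(N)F(\eta_s)\,ds$. The law of large numbers for $\langle M^N\rangle$ via covariance decay is the argument of Lemma~\ref{lemma 3.5}. This all goes through, provided you use the bound $\mathrm{Var}(\phi_N)=o(N)$, obtained as in Lemma~\ref{lemma 3.4}, rather than the bound you claim.

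\textbf{The gap: convergence in $C[0,T]$.} You base this on $\sup_{t\le TN}|\phi_N(t,\eta_t)|/\sqrt N\to0$, derived from a uniform-in-$N$ bound on $\|\phi_N\|_{L^2(\mu)}$ plus a discretization. That uniform bound is false under \eqref{Assumption 1.4}--\eqref{Assumption 1.6}. Take translation-invariant $q$ on $\mathbb{Z}^d$ and $F(\eta)=\eta(0)-p$. Duality and Proposition~\ref{Proposition 2.1} give exactly
\[
\mathrm{Var}_{\nu_p}\Big(\int_0^L S_uF\,du\Big)=\frac{p(1-p)}{\int_0^\infty p_u(0,0)\,du}\int_0^L\!\!\int_0^L\!\!\int_0^\infty p_{s+s_1+s_2}(0,0)\,ds\,ds_1\,ds_2 .
\]
This triple integral is not controlled by \eqref{Assumption 1.6}. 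It grows like $L^{1/2}$ for the nearest-neighbour walk on $\mathbb{Z}^5$, and like $L^{3-1/\alpha}$ for the long-range walk on $\mathbb{Z}$ with $\alpha\in(2/5,1/2)$.

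With only $o(N)$ available, the discretization cannot close. A Chebyshev union bound over a grid of mesh $h$ costs about $T\,\mathrm{Var}(\phi_N)/(h\varepsilon^2)$, so you need $h\gg\mathrm{Var}(\phi_N)$. Between grid points you have $\phi_N(t,\eta_t)=\phi_N(0,\eta_0)+M^N_t-\xi^F_t$. There you may only use the trivial bound $|\xi^F_t-\xi^F_{t'}|\le 2\|F\|_\infty|t-t'|$, since anything sharper is exactly the tightness you are trying to prove. That bound requires $h\lesssim\varepsilon\sqrt N$. The two requirements are compatible only if $\mathrm{Var}(\phi_N)=o(\sqrt N)$, which fails in both examples above. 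Under that stronger condition your route does work, using mesh $\delta\sqrt N$ and a fourth-moment inequality for $M^N$, whose jumps and quadratic-variation rate are bounded. But that condition excludes cases covered by the theorem.

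\textbf{What is missing.} You need a genuine higher-moment estimate. The paper proves tightness directly from
\[
\mathbb{E}\big(\xi^F_{tN}-\xi^F_{sN}\big)^4\le C(t-s)^2N^2
\]
together with Kolmogorov's criterion, and establishing this bound is the bulk of its proof. The four-time moment is written via duality with four coupled coalescing systems, the $k$-th activated at dual time $s_1-s_k$. On the events where the systems stay separated or only one pair meets, centering and independence make the main part vanish (Lemmas~\ref{lemma 3.10} and \ref{lemma 3.11}). The events with two disjoint meetings or three linked systems are bounded through Lemmas~\ref{lemma 3.7} and \ref{lemma 3.8}; each meeting costs a factor $C_3/C_2$ after integrating out one time variable.

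Without such an estimate, or an analogue for the corrector such as $\sup_t\mathbb{E}_\mu\phi_N(t,\eta_t)^4=O(N)$, your argument yields only convergence of finite-dimensional distributions, i.e.\ Corollaries~\ref{corollary 3.6} and \ref{corollary 3.14}.
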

Note that the expression of $\xi_t^F$ in Theorem \ref{theorem 2.3 main} depends on the distribution of $\eta_0$. When $\eta_0$ is distributed with $\mu_p$, $\xi_t^F=\int_0^t\left(F(\eta_s)-\mathbb{E}_{\mu_p}F(\eta_s)\right)ds$. When $\eta_0$ is distributed with $\nu_p$, $\xi_t^F=\int_0^t\left(F(\eta_s)-\mathbb{E}_{\nu_p}F(\eta_0)\right)ds$ since $\nu_p$ is an invariant distribution.

As corollaries of Theorem \ref{theorem 2.3 main}, we obtain the invariance principle for the additive functional of the nearest-neighbor voter model starting from $\mu_p$ or $\nu_p$ on $\mathbb{Z}^d$ for $d\geq 5$ or $\mathcal{T}^d$ for $d\geq 2$. We also obtain the invariance principle for the additive functional of the long-range voter model on lattices starting from $\mu_p$ or $\nu_p$ with the parameter $(d, \alpha)$ in
\[
\{1\}\times (0, 1/2)\bigcup \{2\}\times (0, 1)\bigcup \{3\}\times (0, 3/2) \bigcup \{4\}\times (0, 2]\bigcup \{5, 6, \ldots\}\times (0, +\infty).
\]
There are still many cases which are not covered by Theorem \ref{theorem 2.3 main} such as the nearest-neighbor case on $\mathbb{Z}^d$ for $d\leq 4$. We will work on these cases as a further investigation.

In \cite{Kipnis1986}, Kipnis and Varadhan give a functional central limit theorem for the additive functionals of Markov processes starting from a reversible distribution. Note that $\nu_p$ is an invariant distribution of the voter model $\{\eta_t\}_{t\geq 0}$ but not a reversible distribution, so Theorem \ref{theorem 2.3 main} in the case where $\eta_0$ is distributed with $\nu_p$ is not a direct corollary of the Kipnis-Varadhan theorem given in \cite{Kipnis1986}.

We prove Theorem \ref{theorem 2.3 main} in Section \ref{section three}. Our proof utilizes a martingale decomposition strategy. Roughly speaking, we choose a proper function $f: [0, +\infty)\times\{0, 1\}^G\rightarrow \mathbb{R}$ such that
\[
\left(\partial_t+\mathcal{L}\right)f(t, \eta)=-\left(F(\eta)-\mathbb{E}F(\eta_t)\right)+\text{`error term'}.
\]
Then, according to the Dynkin's martingale formula, there exists a martingale which can be decomposed as the centered additive functional $\xi_t^F$ plus an error term. Consequently, to obtain the invariance principle of the additive functional, we only need to give the invariance principle of the aforesaid martingale and verify the tightness of the additive functional. A detailed survey of this martingale decomposition method is given in \cite{Komorowski2012}. To choose a proper $f$ to execute the above strategy, we introduce the following approach. Let $\{S(t)\}_{t\geq 0}$ be the Markov semigroup of $\{\eta_t\}_{t\geq 0}$. Since $S(0)$ is the identity map, we have
\[
S(M)F(\eta)-F(\eta)=\int_0^M \frac{d}{ds}S(s)F(\eta)ds,
\]
where $M$ is a large moment depending on $N$. According to the duality relationship between the voter model and the coalescing random walk, we can give a clear expression of $S(M)F(\eta)$ and show that this term is a small error when $M$ is sufficiently large. By Kolmogorov backward equation, $\frac{d}{ds}S(s)=\mathcal{L}S(s)$. Then, we choose a centered version of $\int_0^MS(s)Fds$ as our $f$. We think that the above approach could be widely utilized in investigations of the invariance principles for the additive functionals of interacting particle systems which have relatively clear expressions of their Markov semigroups. For the mathematical details of the proof of Theorem \ref{theorem 2.3 main}, see Section \ref{section three}.

\section{Proof of Theorem \ref{theorem 2.3 main}}\label{section three}
In this section, we prove Theorem \ref{theorem 2.3 main}. As we have introduced at the end of Section \ref{section two}, our proof utilizes a martingale decomposition approach. For later use, we first review the duality relationship between the voter model and the coalescing random walk. For given integer $m\geq 1$, $x_1, x_2, \ldots, x_m\in G$ and $t_1, t_2, \ldots, t_m\geq 0$, the coalescing random walk
$\left\{\left(\mathcal{X}_t^{t_1, x_1}, \mathcal{X}_t^{t_2, x_2}, \ldots, \mathcal{X}_t^{t_m, x_m}\right)\right\}_{t\geq 0}$ evolves as follows. For all $1\leq i\leq m$, $\mathcal{X}_s^{t_i, x_i}=x_i$ when $0\leq s\leq t_i$ and $\{\mathcal{X}_{t_i+t}^{t_i, x_i}\}_{t\geq 0}$ is a copy of $\{X_t\}_{t\geq 0}$ with generator \eqref{equ generator of the random walk} starting from $x_i$. We call $X_s^{t_i, x_i}$ `frozen' when $s\leq t_i$ or `active' when $s>t_i$. The joint
distribution of $\left(\mathcal{X}_\cdot^{t_1, x_1}, \mathcal{X}_\cdot^{t_2, x_2}, \ldots, \mathcal{X}_\cdot^{t_m, x_m}\right)$ is as follows. All active walks evolve independently until collision occurs. When two active walks collide with each other, they are coalesced into one active walk. When an active walk hits a frozen walk, they are not coalesced. Note that, when $x_i=x_j$ and $t_i=t_j$ for some $i, j$, $\{\mathcal{X}_t^{t_i, x_i}\}_{t\geq 0}$ and $\{\mathcal{X}_t^{t_j, x_j}\}_{t\geq 0}$ are coalesced as one walk from beginning.

We denote by $\hat{\mathbb{P}}$ the probability measure of the coalescing random walk and by $\hat{\mathbb{E}}$ the expectation with respect to $\hat{\mathbb{P}}$. For simplicity, we write $\mathcal{X}_t^{0, x}$ as $\mathcal{X}_t^x$. For $x_1, \ldots, x_m, y_1, \ldots, y_m\in G$ and $t\geq 0$, we denote by $\beta_t\left((x_1, \ldots, x_m), (y_1, \ldots, y_m)\right)$ the term
\[
\hat{\mathbb{P}}\left(\left(\mathcal{X}_t^{x_1}, \ldots, \mathcal{X}_t^{x_m}\right)=(y_1, \ldots, y_m)\right).
\]

The following duality relationship between the voter model and the coalescing random walk is independently given in \cite{Clifford1973} and \cite{Holley1975}.

\begin{proposition}\label{proposition 3.1 dual}(Clifford and Sudbury, 1973; Holley and Liggett, 1975).
For any integer $m\geq 1$, $x_1, x_2, \ldots, x_m\in G$, $0\leq t_1, t_2, \ldots, t_m\leq s$ and given $\eta\in \{0, 1\}^G$, we have
\begin{align}\label{equ 3.1 dual}
\mathbb{P}_\eta\left(\eta_{t_i}(x_i)=1\text{~for all~}1\leq i\leq m\right)
=\hat{\mathbb{P}}\left(\mathcal{X}_s^{s-t_i, x_i}\in \mathcal{A}(\eta)\text{~for all~}1\leq i\leq m\right),
\end{align}
where $A(\eta)=\{x\in G:~\eta(x)=1\}$.
\end{proposition}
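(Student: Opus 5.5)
We would prove Proposition \ref{proposition 3.1 dual} through the graphical (Harris) representation of the voter model, from which both sides of \eqref{equ 3.1 dual} can be read off the same Poisson clocks. For each ordered pair $x\neq y$ in $G$, let $\{N_{x,y}(u)\}_{u\geq 0}$ be independent Poisson processes with rates $q(x,y)$. At each event time $u$ of $N_{x,y}$ we draw an arrow meaning ``at time $u$, site $x$ adopts the opinion of $y$''. Starting from $\eta_0=\eta$, we define $\eta_u$ by applying these updates in time order. This process has generator \eqref{equ 1.1 generator}, and all $\eta_{t_i}$, $1\leq i\leq m$, are built on one probability space. Since $G$ may be infinite, well-definedness is not automatic. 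We would invoke the standard construction: because $\sup_x\sum_{y}q(x,y)<+\infty$, on a short time interval the set of sites influencing a given site is a.s.\ finite. If needed, one can instead cite the construction in Chapter 5 of \cite{Lig1985}.

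Next we trace ancestry backward. Fix $s\geq \max_i t_i$. For each $i$, define a path $\{Y^i_r\}_{0\leq r\leq s}$ in dual time $r=s-u$ as follows. For $r\in[0,s-t_i]$ we set $Y^i_r=x_i$; this is the frozen period. For $r>s-t_i$, the path moves backward in real time from $(x_i,t_i)$. When it sits at $z$ and meets an event of $N_{z,y}$ at real time $u=s-r$, it jumps to $y$. By construction $\eta_{t_i}(x_i)=\eta(Y^i_s)$ for every $i$ simultaneously. Therefore
\[
\{\eta_{t_i}(x_i)=1\ \forall i\}=\{Y^i_s\in\mathcal{A}(\eta)\ \forall i\},
\]
as events on the same space. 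It then suffices to show that the joint law of $(Y^1_\cdot,\ldots,Y^m_\cdot)$ equals that of $(\mathcal{X}^{s-t_1,x_1}_\cdot,\ldots,\mathcal{X}^{s-t_m,x_m}_\cdot)$ on $[0,s]$.

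To identify this law we use time reversal. Restricted to $[0,s]$, the family of Poisson processes reversed via $u\mapsto s-u$ is again a family of independent Poisson processes with the same rates. Hence a single active path jumps from $z$ to $y$ at rate $q(z,y)$, which is exactly the walk with generator \eqref{equ generator of the random walk}. Two active paths at distinct sites read disjoint families of clocks $\{N_{z,\cdot}\}$, so by the strong Markov property of the Poisson family they evolve independently until they meet. Once two active paths occupy the same site at the same dual time, they read the same clocks forever and so move together, which is coalescence. An active path that lands on the site of a frozen path does not merge with it, since the frozen path ignores the clocks until its activation time and the two generally separate. Paths activated at the same time and place coincide from the start. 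This reproduces exactly the rules defining the coalescing random walk, and taking probabilities gives \eqref{equ 3.1 dual}.

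The main obstacle is making the joint-law identification rigorous, in particular the independence-until-meeting and frozen-walk rules. We would handle this by induction over the successive stopping times at which paths activate or meet. At each such time we apply the strong Markov property of the reversed Poisson family, together with the fact that Poisson processes attached to disjoint site sets are independent. The infinite-volume construction is the secondary technical point. It is handled by the standard percolation argument on short time intervals, valid because $\sup_{x\in G}\sum_{y\neq x}q(x,y)<+\infty$.
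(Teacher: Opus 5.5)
Your proposal is correct and follows essentially the same route as the paper. The paper invokes Harris's graphical representation to get the coupling $\eta_{t_i}(x_i)=\eta_{s-u}(\mathcal{X}_u^{s-t_i,x_i})$ and reads \eqref{equ 3.1 dual} off it; you supply, correctly, the details it leaves implicit: time reversal of the Poisson clocks, independence of walks sitting on distinct sites, and why an active walk passing a frozen one does not merge with it.
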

According to the graphical method introduced in \cite{Har1978}, $\mathcal{X}_u^{s-t_i, x_i}$ can be considered as the owner of the opinion $\eta_{t_i}(x_i)$ at the moment $s-u$ for any $s-t_i\leq u\leq s$, i.e., in the sense of coupling,
\begin{equation}\label{equ 3.2 coupling}
\eta_{t_i}(x_i)=\eta_{s-u}(\mathcal{X}_u^{s-t_i, x_i}).
\end{equation}
Equation \eqref{equ 3.1 dual} follows from \eqref{equ 3.2 coupling} directly.

Note that Proposition \ref{Proposition 2.1} is a corollary of Proposition \ref{proposition 3.1 dual}. By Proposition \ref{proposition 3.1 dual},
\[
\mathbb{P}_{\mu_p}\left(\eta_{t}(x_i)=1\text{~for all~}1\leq i\leq m\right)
=\hat{\mathbb{E}}p^{{\rm card}\left\{\mathcal{X}_t^{x_1}, \mathcal{X}_t^{x_2}, \ldots, \mathcal{X}_t^{x_m}\right\}},
\]
where ${\rm card}(A)$ is the cardinality of the set $A$. Consequently, the existence of $\nu_p$ follows from the fact that ${\rm card}\left\{\mathcal{X}_t^{x_1}, \mathcal{X}_t^{x_2}, \ldots, \mathcal{X}_t^{x_m}\right\}$ is decreasing in $t$.

Since each local $F: \{0, 1\}^G\rightarrow \mathbb{R}$ is a linear combination of several indicator functions $1_{\{\eta(x)=1\text{~for all~}x\in B\}}$ for some $B\subseteq G$, we have the following equivalent statement of Proposition \ref{proposition 3.1 dual}, which is more convenient to utilize in our proof.
\begin{proposition}\label{proposition 3.2 dual equivalent}(Clifford and Sudbury, 1973; Holley and Liggett, 1975).
For any integer $m\geq 1$, $x_1, x_2, \ldots, x_m\in G$, $0\leq t_1, t_2, \ldots, t_m\leq s$, given $\eta\in \{0, 1\}^G$ and $H:\{0, 1\}^m\rightarrow \mathbb{R}$, we have
\begin{align}\label{equ 3.3 dual}
&\mathbb{E}_\eta H\left(\eta_{t_1}(x_1), \eta_{t_2}(x_2)\ldots, \eta_{t_m}(x_m)\right) \notag\\
&=\hat{\mathbb{E}}H\left(\eta(\mathcal{X}_s^{s-t_1, x_1}), \eta(\mathcal{X}_s^{s-t_2, x_2})\ldots, \eta(\mathcal{X}_s^{s-t_m, x_m})\right).
\end{align}
Consequently, for any probability distribution $\mu$ on $\{0, 1\}^G$,
\begin{align*}
&\mathbb{E}_{\mu} H\left(\eta_{t_1}(x_1), \eta_{t_2}(x_2)\ldots, \eta_{t_m}(x_m)\right) \notag\\
&=\hat{\mathbb{E}}\mathbb{E}_{\mu}H\left(\eta_0(\mathcal{X}_s^{s-t_1, x_1}), \eta_0(\mathcal{X}_s^{s-t_2, x_2})\ldots, \eta_0(\mathcal{X}_s^{s-t_m, x_m})\right).
\end{align*}
\end{proposition}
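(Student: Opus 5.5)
Proposition \ref{proposition 3.2 dual equivalent} is a linear consequence of Proposition \ref{proposition 3.1 dual}. The plan is to write the general $H$ as a linear combination of monomials, apply \eqref{equ 3.1 dual} to each monomial, and then integrate over $\mu$.

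\emph{Step 1: expansion of $H$.} Every $H:\{0,1\}^m\rightarrow\mathbb{R}$ has the multilinear representation
\[
H(a_1,\ldots,a_m)=\sum_{B\subseteq\{1,\ldots,m\}}c_B\prod_{i\in B}a_i
\]
for real coefficients $c_B$. These are given by M\"obius inversion,
\[
c_B=\sum_{D\subseteq B}(-1)^{|B|-|D|}H(1_D),
\]
where $1_D\in\{0,1\}^m$ is the indicator vector of $D$. Both sides of \eqref{equ 3.3 dual} are linear in $H$, so it suffices to prove \eqref{equ 3.3 dual} for each monomial $H_B(a)=\prod_{i\in B}a_i$. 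The case $B=\emptyset$ is trivial, since both sides equal $1$.

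\emph{Step 2: monomials.} Fix $B\neq\emptyset$. On the left side, $\mathbb{E}_\eta\prod_{i\in B}\eta_{t_i}(x_i)$ is the probability that $\eta_{t_i}(x_i)=1$ for all $i\in B$. Apply Proposition \ref{proposition 3.1 dual} to the subfamily $\{(x_i,t_i)\}_{i\in B}$ with the same $s$; the hypothesis $0\le t_i\le s$ still holds. This gives
\[
\hat{\mathbb{P}}\left(\mathcal{X}_s^{s-t_i,x_i}\in\mathcal{A}(\eta)\text{~for all~}i\in B\right)
=\hat{\mathbb{E}}\prod_{i\in B}\eta(\mathcal{X}_s^{s-t_i,x_i}).
\]

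One point needs care. Proposition \ref{proposition 3.1 dual} applied to the subfamily uses the coalescing system built only from the walks indexed by $B$, whereas the right side of \eqref{equ 3.3 dual} uses the full system of $m$ walks. I will show that the marginal law of the walks indexed by $B$ in the full system equals the law of the system built from $B$ alone. The cleanest argument uses the graphical (Harris) construction behind \eqref{equ 3.2 coupling}. In that construction, every walk $\mathcal{X}^{s-t_i,x_i}$ is a deterministic function of the same Poisson arrows together with its own starting data. Coalescence then occurs automatically: two active walks that meet follow the same arrows from that point on. The frozen-then-active rule is also respected, because a walk ignores the arrows before its own activation time. Since each walk's path does not depend on which other walks are present, the subfamily indexed by $B$ has the same law in both systems. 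Alternatively, one can skip the marginal question entirely by taking the expectation of \eqref{equ 3.2 coupling} directly, since that coupling holds simultaneously for all $i$.

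\emph{Step 3: summation and integration.} Summing the monomial identities with the coefficients $c_B$ gives \eqref{equ 3.3 dual}. For the second claim, integrate \eqref{equ 3.3 dual} against $\mu(d\eta)$. The integrand is bounded by $\max|H|$, so Fubini's theorem lets us exchange $\int\mu(d\eta)$ with $\hat{\mathbb{E}}$. The coalescing walks are independent of $\eta_0\sim\mu$, so this yields $\hat{\mathbb{E}}\mathbb{E}_\mu H(\eta_0(\mathcal{X}_s^{s-t_1,x_1}),\ldots,\eta_0(\mathcal{X}_s^{s-t_m,x_m}))$.

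The main obstacle is the consistency point in Step 2, namely that the sub-coalescing system has the correct marginal law. Everything else is routine linear algebra and Fubini.
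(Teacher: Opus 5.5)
Your proposal is correct and follows the paper's route: the paper also obtains Proposition \ref{proposition 3.2 dual equivalent} from Proposition \ref{proposition 3.1 dual} by writing the local function as a linear combination of indicators $1_{\{\eta(x)=1\text{ for all }x\in B\}}$ (your monomials) and then averaging over $\mu$. You additionally make explicit two points the paper leaves implicit: the consistency of the sub-coalescing system indexed by $B$ (or, more directly, taking expectations in the pathwise coupling \eqref{equ 3.2 coupling} with $u=s$), and the Fubini step for the $\mu$-average.
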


Let $F: \{0, 1\}^G\rightarrow\mathbb{R}$ be a local function. As we have introduced in Section \ref{section one}, there exist integer $m\geq 1$, $x_1, \ldots, x_m\in G$ and $H: \{0, 1\}^m\rightarrow\mathbb{R}$ such that
\[
F(\eta)=H\left(\eta(x_1), \ldots, \eta(x_m)\right)
\]
for any $\eta\in \{0, 1\}^G$. From now on, for simplicity, we write $H(\eta(x_1), \ldots, \eta(x_m))$ as
\[
H(\eta, x_1, \ldots, x_m).
\]
The following lemma is crucial for us to prove Lemma \ref{lemma asymptotic variance} and Theorem \ref{theorem 2.3 main}.

\begin{lemma}\label{lemma 3.zero}
For any integer $m\geq 1$, $H_1, H_2: \{0, 1\}^m\rightarrow \mathbb{R}$,
\[
x_1, x_2, \ldots, x_m, y_1, y_2, \ldots, y_m\in G
\]
and $t, r\geq 0$, we have
\begin{align*}
&\left|{\rm Cov}_{\mu_p}\left(H_1(\eta_{t+r}, x_1, \ldots, x_m), H_2(\eta_t, y_1, \ldots, y_m)\right)\right|\\
&\leq \frac{2\|H_1\|_\infty\|H_2\|_\infty}{C_2}\sum_{i=1}^m\sum_{j=1}^m\int_0^{+\infty}p_{s+r}(x_i, y_j)ds,
\end{align*}
where $C_2$ is defined as in Assumption \eqref{Assumption 1.5} and $\|H_1\|_\infty=\max_{\iota\in \{0, 1\}^m}|H_1(\iota)|$.
\end{lemma}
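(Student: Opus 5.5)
We would represent both observables through one coalescing random walk using Proposition \ref{proposition 3.2 dual equivalent}, and then reduce the covariance to the probability that a lineage started from some $x_i$ ever meets a lineage started from some $y_j$. Take $s=t+r$ in Proposition \ref{proposition 3.2 dual equivalent}. The opinions $\eta_{t+r}(x_i)$ are traced by the walks $\mathcal{X}^{0,x_i}_\cdot$, which are active from time $0$. The opinions $\eta_t(y_j)$ are traced by $\mathcal{X}^{r,y_j}_\cdot$, which stay frozen at $y_j$ during $[0,r]$ and are active afterwards. Both families are then read in $\eta_0\sim\mu_p$ at time $s$. If an $x$-walk sits on a frozen $y$-walk, nothing happens, since frozen walks do not coalesce. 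This matches the graphical picture: a backward lineage of $\eta_{t+r}(x_i)$ passing through $y_j$ at a forward time later than $t$ carries no information about $\eta_t(y_j)$. Hence dependence between the two families can only arise through collisions occurring after time $r$.

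\emph{Coupling step.} We build the coalescing system from two independent families. The first is a coalescing walk for the $x$-walks, with its own randomness; the second is an independent coalescing walk for the $y$-walks, started at time $r$. We take two independent configurations $\eta_0,\eta_0'$ with law $\mu_p$. Let $D$ be the event that some $x$-walk and some $y$-walk occupy the same site at some time $u\geq r$. Off $D$, the two families evolve exactly as in the joint coalescing system, and their positions at time $s$ are disjoint sets. By the product structure of $\mu_p$, evaluating $\eta_0$ on two disjoint random sets determined independently of $\eta_0$ has the same joint law as evaluating $\eta_0$ on one set and an independent copy $\eta_0'$ on the other. The quantity computed with $(\eta_0,\eta_0')$ factorizes into $\mathbb{E}H_1\cdot\mathbb{E}H_2$. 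The two computations differ only on $D$, so we expect
\[
\left|{\rm Cov}_{\mu_p}(\cdot,\cdot)\right|\leq 2\|H_1\|_\infty\|H_2\|_\infty\,\hat{\mathbb{P}}(D).
\]
Making this coupling precise is the step we expect to be the main obstacle. The natural choice is to let the joint system follow the independent families up to the first $x$--$y$ meeting and to couple the initial configurations on disjoint sites. Both the coupling itself and the claim that the joint system and the independent families coincide off $D$ must be justified.

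\emph{Meeting probability.} By a union bound, $\hat{\mathbb{P}}(D)\leq\sum_{i,j}\tilde{\mathbb{P}}(\text{$X^{x_i}$ and $\hat X^{y_j}$ meet after time } r)$. Here $X^{x_i}$ is a single independent walk from $x_i$, and $\hat X^{y_j}$ is an independent walk started from $y_j$ at time $r$. Coalescence inside each family does not affect a given lineage's marginal path, and before they meet the two lineages move independently. The expected time the pair spends together after $r$ is computed from the symmetry of $p_t$ and the Chapman--Kolmogorov equation:
\[
\int_0^{\infty}\sum_z p_{r+u}(x_i,z)p_u(y_j,z)\,du=\int_0^\infty p_{2u+r}(x_i,y_j)\,du .
\]
Let $\tau$ be the first meeting time after $r$ and apply the strong Markov property at $\tau$. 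From $\tau$ on, the difference of the two walks is a walk run at double speed started at a common point $z$. Its expected time at $0$ is $\int_0^\infty p_{2u}(z,z)\,du\geq C_2/2$, using Assumption \eqref{Assumption 1.5}. Hence the meeting probability is at most
\[
\frac{2}{C_2}\int_0^\infty p_{2u+r}(x_i,y_j)\,du=\frac{1}{C_2}\int_0^\infty p_{v+r}(x_i,y_j)\,dv.
\]
Combining this with the coupling bound gives the stated constant $2\|H_1\|_\infty\|H_2\|_\infty/C_2$.
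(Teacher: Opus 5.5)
Your proposal is correct and is essentially the paper's argument. It uses the same ingredients: coupling the joint coalescing system with two independent families up to the first cross-meeting, using the product structure of $\mu_p$ to factorize off that event, a union bound over pairs $(i,j)$, and the strong Markov/occupation-time bound with $C_2$. The only difference is bookkeeping: the paper first applies the Markov property at time $t$ and duality over time $r$, which reduces to an equal-time covariance $\Delta$ weighted by $\beta_r$, and then averages $\int_0^\infty p_s(z_i,y_j)\,ds$ against $p_r(x_i,z_i)$; you instead absorb the offset $r$ into the dual via frozen walks and compute the post-$r$ meeting probability directly. One small correction: on a general $G$ there is no ``difference of the two walks'', so you should cite the identity $\sum_w p_u(z,w)^2=p_{2u}(z,z)$ from symmetry instead.
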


\proof
By applying Proposition \ref{proposition 3.2 dual equivalent} with $t_1=t_2=\ldots=t_m=s=r$, we have
\[
\mathbb{E}_{\eta_0}H_1(\eta_r, x_1, \ldots, x_m)=\sum_{z_1, \ldots, z_m\in G}\beta_r\left((x_1, \ldots, x_m), (z_1, \ldots, z_m)\right)H_1(\eta_0, z_1, \ldots, z_m).
\]
Then, by Markov property, we have
\begin{align}\label{equ add 1}
&{\rm Cov}_{\mu_p}\left(H_1(\eta_{t+r}, x_1, \ldots, x_m), H_2(\eta_t, y_1, \ldots, y_m)\right)\notag\\
&=\sum_{z_1, \ldots, z_m\in G}\beta_r\left((x_1, \ldots, x_m), (z_1, \ldots, z_m)\right)
\Delta_{H_1H_2}(p, t, z_1, \ldots, z_m, y_1, \ldots, y_m),
\end{align}
where
\begin{align*}
\Delta_{H_1H_2}(p, t, z_1, \ldots, z_m, y_1, \ldots, y_m)
&=\mathbb{E}_{\mu_p}\left(H_1(\eta_t, z_1, \ldots, z_m)H_2(\eta_t, x_1, \ldots, x_m)\right)\\
&\text{\quad\quad}-\mathbb{E}_{\mu_p}H_1(\eta_t, z_1, \ldots, z_m)\mathbb{E}_{\mu_p}H_2(\eta_t, y_1, \ldots, y_m).
\end{align*}
Let
\[
\{\left(\mathcal{X}_t^{z_1}, \ldots, \mathcal{X}_t^{z_m}, \mathcal{X}_t^{y_1}, \ldots, \mathcal{X}_t^{y_m}\right)\}_{t\geq 0}
\]
be the (no freeze) coalescing random walk starting from $z_1, \ldots, z_m, y_1, \ldots, y_m$ and
\[
\left\{\left(\tilde{\mathcal{X}}_t^{y_1}, \ldots, \tilde{\mathcal{X}}_t^{y_m}\right)\right\}_{t\geq 0}
\]
be another coalescing random walk starting from $y_1, \ldots, y_m$ which is independent of
\[
\left\{\left(\mathcal{X}_t^{z_1}, \ldots, \mathcal{X}_t^{z_m}\right)\right\}_{t\geq 0}.
\]
According to the definition of the coalescing random walk, we can further assume that
\[
\left(\tilde{\mathcal{X}}_t^{y_1}, \ldots, \tilde{\mathcal{X}}_t^{y_m}\right)=
\left(\mathcal{X}_t^{y_1}, \ldots, \mathcal{X}_t^{y_m}\right)
\]
for all $t\leq \hat{\tau}$, where
\[
\hat{\tau}=\inf\{t:~\mathcal{X}_t^{z_i}=\mathcal{X}_t^{y_j}\text{~for some~}1\leq i, j\leq m\}.
\]
By Proposition \ref{proposition 3.2 dual equivalent},
\begin{align*}
&\mathbb{E}_{\mu_p}\left(H_2(\eta_t, y_1, \ldots, y_m)H_1(\eta_t, z_1, \ldots, z_m)\right)\\
&=\hat{\mathbb{E}}\mathbb{E}_{\mu_p}\left(H_2(\eta_0, \mathcal{X}_t^{y_1}, \ldots, \mathcal{X}_t^{y_m})
H_1(\eta_0, \mathcal{X}_t^{z_1}, \ldots, \mathcal{X}_t^{z_m})\right)
\end{align*}
and
\begin{align*}
&\mathbb{E}_{\mu_p}H_2(\eta_t, y_1, \ldots, y_m)\mathbb{E}_{\mu_p}H_1(\eta_t, z_1, \ldots, z_m)\\
&=\hat{\mathbb{E}}\mathbb{E}_{\mu_p}\left(H_2(\eta_0, \tilde{\mathcal{X}}_t^{y_1}, \ldots, \tilde{\mathcal{X}}_t^{y_m})\right)
\hat{\mathbb{E}}\mathbb{E}_{\mu_p}\left(H_1(\eta_0, \mathcal{X}_t^{z_1}, \ldots, \mathcal{X}_t^{z_m})\right)\\
&=\hat{\mathbb{E}}\left(\mathbb{E}_{\mu_p}\left(H_2(\eta_0, \tilde{\mathcal{X}}_t^{y_1}, \ldots, \tilde{\mathcal{X}}_t^{y_m})\right)
\mathbb{E}_{\mu_p}\left(H_1(\eta_0, \mathcal{X}_t^{z_1}, \ldots, \mathcal{X}_t^{z_m})\right)\right)
\end{align*}
since
$
\left\{\left(\tilde{\mathcal{X}}_t^{y_1}, \ldots, \tilde{\mathcal{X}}_t^{y_m}\right)\right\}_{t\geq 0}
$
is independent of
$
\left\{\left(\mathcal{X}_t^{z_1}, \ldots, \mathcal{X}_t^{z_m}\right)\right\}_{t\geq 0}.
$
On the event $\{\hat{\tau}>t\}$,
\begin{align*}
&\mathbb{E}_{\mu_p}\left(H_2(\eta_0, \tilde{\mathcal{X}}_t^{y_1}, \ldots, \tilde{\mathcal{X}}_t^{y_m})\right)
\mathbb{E}_{\mu_p}\left(H_1(\eta_0, \mathcal{X}_t^{z_1}, \ldots, \mathcal{X}_t^{z_m})\right)\\
&=\mathbb{E}_{\mu_p}\left(H_2(\eta_0, \mathcal{X}_t^{y_1}, \ldots, \mathcal{X}_t^{y_m})\right)
\mathbb{E}_{\mu_p}\left(H_1(\eta_0, \mathcal{X}_t^{z_1}, \ldots, \mathcal{X}_t^{z_m})\right)\\
&=\mathbb{E}_{\mu_p}
\left(H_2(\eta_0, \mathcal{X}_t^{y_1}, \ldots, \mathcal{X}_t^{y_m})H_1(\eta_0, \mathcal{X}_t^{z_1}, \ldots, \mathcal{X}_t^{z_m})\right)
\end{align*}
since $\mu_p$ is a product measure and
\[
\left\{\mathcal{X}_t^{z_1}, \ldots, \mathcal{X}_t^{z_m}\right\}
\bigcap\left\{\mathcal{X}_t^{y_1}, \ldots, \mathcal{X}_t^{y_m}\right\}=\emptyset
\]
when $\hat{\tau}>t$. Therefore,
\begin{align*}
&\left|\mathbb{E}_{\mu_p}\left(H_2(\eta_t, y_1, \ldots, y_m)H_1(\eta_t, z_1, \ldots, z_m)\right)
-\mathbb{E}_{\mu_p}H_2(\eta_t, y_1, \ldots, y_m)\mathbb{E}_{\mu_p}H_1(\eta_t, z_1, \ldots, z_m)\right|\\
&\leq 2\|H_1\|_\infty\|H_2\|_\infty\hat{\mathbb{P}}(\hat{\tau}<t)\leq 2\|H_1\|_\infty\|H_2\|_\infty\hat{\mathbb{P}}(\hat{\tau}<+\infty)
\end{align*}
for any $t\geq 0$ and consequently
\begin{equation}\label{equ add 2}
|\Delta_{H_1H_2}(p, t, z_1, \ldots, z_m, y_1, \ldots, y_m)|\leq  2\|H_1\|_\infty\|H_2\|_\infty\hat{\mathbb{P}}(\hat{\tau}<+\infty).
\end{equation}
According to the definition of $\hat{\tau}$, we have
\[
\hat{\mathbb{P}}(\hat{\tau}<+\infty)
\leq \sum_{i=1}^m\sum_{j=1}^m\tilde{\mathbb{P}}(\tau_{z_i, y_j}<+\infty),
\]
where $\tau_{x, y}$ is defined as in Proposition \ref{Proposition 2.1}. According to the strong Markov property,
\begin{align*}
&\tilde{\mathbb{E}}\int_0^{+\infty}1_{\{X_s^{z_i}=\hat{X}_s^{y_j}\}}ds\\
&=\sum_{z\in G}\tilde{\mathbb{P}}\left(\tau_{z_i, y_j}<+\infty,
X^{z_i}_{\tau_{z_i, y_j}}=\hat{X}^{y_j}_{\tau_{z_i, y_j}}=z\right)
\tilde{\mathbb{E}}\int_0^{+\infty}1_{\{X_s^z=\hat{X}_s^z\}}ds.
\end{align*}
As we have pointed out, $\tilde{\mathbb{P}}(X_s^{x}=\hat{X}_s^{y})=p_{2s}(x, y)$ under Assumption \eqref{Assumption 1.2 symmetric}. Therefore, $\tilde{\mathbb{E}}\int_0^{+\infty}1_{\{X_s^z=\hat{X}_s^z\}}ds\geq \frac{C_2}{2}$ under Assumption \eqref{Assumption 1.5}. Consequently,
\[
\tilde{\mathbb{P}}\left(\tau_{z_i, y_j}<+\infty\right)\leq \frac{1}{C_2}\int_0^{+\infty}p_s(z_i, y_j)ds
\]
and
\[
\hat{\mathbb{P}}(\hat{\tau}<+\infty)
\leq \sum_{i=1}^m\sum_{j=1}^m\frac{1}{C_2}\int_0^{+\infty}p_s(z_i, y_j)ds.
\]
For fixed $z_j$, according to the definition of $\beta_r$,
\[
\sum_{z_1,\ldots, z_{j-1}, z_{j+1}, \ldots, z_m\in G}\beta_r\left((x_1, \ldots, x_m), (z_1, \ldots, z_m)\right)
=p_r(x_j, z_j).
\]
Then, since $\sum_{z_j\in G}p_r(x_j, z_j)p_s(z_j, y_i)=p_{r+s}(x_j, y_i)$,
\begin{align*}
&\left|{\rm Cov}_{\mu_p}\left(H_1(\eta_{t+r}, x_1, \ldots, x_m), H_2(\eta_t, y_1, \ldots, y_m)\right)\right|\\
&\leq \frac{2\|H_1\|_\infty\|H_2\|_\infty}{C_2}\sum_{i=1}^m\sum_{j=1}^m\int_0^{+\infty}p_{s+r}(x_i, y_j)ds
\end{align*}
by \eqref{equ add 1} and \eqref{equ add 2}.
\qed

Now we prove Lemma \ref{lemma asymptotic variance}.

\proof[Proof of Lemma \ref{lemma asymptotic variance}]
According to the bilinear property of the covariance and the invariance of $\nu_p$,
\begin{equation}\label{equ 3.4}
{\rm Var}_{\nu_p}(\xi_t^F)=2\int_0^t\left(\int_0^s{\rm Cov}_{\nu_p}\left(F(\eta_r), F(\eta_0)\right)dr\right)ds.
\end{equation}
Consequently, to complete this proof, we only need to show that
\begin{equation}\label{equ 3.5}
\int_0^{+\infty}\left|{\rm Cov}_{\nu_p}\left(F(\eta_r), F(\eta_0)\right)\right|dr<+\infty.
\end{equation}
By \eqref{equ 3.4} and \eqref{equ 3.5}, we have
\begin{equation}\label{equ 3.6}
\sigma_F^2=2\int_0^{+\infty}{\rm Cov}_{\nu_p}\left(F(\eta_r), F(\eta_0)\right)dr.
\end{equation}
Now we verify \eqref{equ 3.5}. According to the definition of $\nu_p$,
\[
{\rm Cov}_{\nu_p}\left(F(\eta_r), F(\eta_0)\right)
=\lim_{t\rightarrow+\infty}{\rm Cov}_{\mu_p}\left(H(\eta_{t+r}, x_1, \ldots, x_m), H(\eta_t, x_1, \ldots, x_m)\right).
\]
Then, by Lemma \ref{lemma 3.zero},
\[
\left|{\rm Cov}_{\nu_p}\left(F(\eta_r), F(\eta_0)\right)\right|\leq
\frac{2\|H\|_\infty^2}{C_2}\sum_{i=1}^m\sum_{j=1}^m\int_0^{+\infty}p_{s+r}(x_i, x_j)ds
\]
and consequently \eqref{equ 3.5} follows from Assumption \eqref{Assumption 1.6}.

\qed

The following lemma shows that, when $\eta_0$ is distributed with $\mu_p$, the limit variance of $\frac{1}{\sqrt{t}}\xi_t^F$ is also $\sigma_F^2$.

\begin{lemma}\label{lemma 3.3}
Let $\eta_0$ be distributed with $\mu_p$ and $\xi_t^F$ be defined as in \eqref{equ 1.7 additive functional} with $\mu=\mu_p$. We have
\[
\lim_{t\rightarrow+\infty}{\rm Var}_{\mu_p}\left(\frac{1}{\sqrt{t}}\xi_t^F\right)=\sigma_F^2,
\]
where $\sigma_F^2$ is the same as that given in Lemma \ref{lemma asymptotic variance}.
\end{lemma}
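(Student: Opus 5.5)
We write the variance as a double integral of covariances. By bilinearity of the covariance,
\[
{\rm Var}_{\mu_p}(\xi_t^F)=2\int_0^t\int_0^{s}{\rm Cov}_{\mu_p}\left(F(\eta_{s}),F(\eta_{u})\right)du\,ds
=2\int_0^t\int_0^{s}c(u,s-u)\,du\,ds ,
\]
where $c(u,r):={\rm Cov}_{\mu_p}\left(H(\eta_{u+r},x_1,\ldots,x_m),H(\eta_u,x_1,\ldots,x_m)\right)$. After substituting $r=s-u$ and exchanging the order of integration, $\frac1t{\rm Var}_{\mu_p}(\xi_t^F)=\frac{2}{t}\int_0^t\int_0^{t-r}c(u,r)\,du\,dr$. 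The goal is to show this converges to $2\int_0^\infty {\rm Cov}_{\nu_p}(F(\eta_r),F(\eta_0))\,dr$, which equals $\sigma_F^2$ by \eqref{equ 3.6}.

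\textbf{Step 1 (domination).} Lemma \ref{lemma 3.zero} gives, uniformly in $u\ge 0$,
\[
|c(u,r)|\le g(r):=\frac{2\|H\|_\infty^2}{C_2}\sum_{i,j}\int_0^\infty p_{s+r}(x_i,x_j)\,ds .
\]
Assumption \eqref{Assumption 1.6} makes $g$ integrable on $[0,\infty)$.

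\textbf{Step 2 (pointwise limit).} Fix $r$. Using the Markov property and Proposition \ref{proposition 3.2 dual equivalent}, write $c(u,r)$ as an expectation of a bounded function of $\eta_u$ and the duals: $\mathbb{E}_{\mu_p}[\Phi_r(\eta_u)]-\mathbb{E}_{\mu_p}[\Psi_r(\eta_u)]\mathbb{E}_{\mu_p}[F(\eta_u)]$. Here $\Phi_r(\eta)=F(\eta)\,S(r)F(\eta)$ and $\Psi_r(\eta)=S(r)F(\eta)$, with $S(r)F(\eta)=\sum_{z}\beta_r((x),(z))H(\eta,z)$. By Proposition \ref{Proposition 2.1}, $\eta_u\Rightarrow\nu_p$ as $u\to\infty$. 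Since $\sum_z\beta_r((x),(z))=1$, the function $S(r)F$ is a uniformly convergent series of local functions, hence continuous in the product topology. Weak convergence therefore gives $c(u,r)\to {\rm Cov}_{\nu_p}(F(\eta_r),F(\eta_0))=:c_\infty(r)$. This is exactly the limit already used in the proof of Lemma \ref{lemma asymptotic variance}, so it can be quoted from there.

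\textbf{Step 3 (Cesàro averaging).} Define $a_t(r)=\frac1t\int_0^{t-r}c(u,r)\,du\,1_{\{r<t\}}$. For fixed $r$, $a_t(r)\to c_\infty(r)$ as $t\to\infty$, because a Cesàro average of a convergent bounded function converges to the same limit. Moreover $|a_t(r)|\le g(r)$. Dominated convergence in $r$ then gives $\frac1t{\rm Var}_{\mu_p}(\xi_t^F)=2\int_0^\infty a_t(r)\,dr\to 2\int_0^\infty c_\infty(r)\,dr=\sigma_F^2$.

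The main obstacle is Step 2: justifying the passage of $u\to\infty$ inside the covariance, which involves a product of observables at two times. This requires the continuity of $S(r)F$ under weak convergence. I would handle it via the uniform-in-$\eta$ tail control $\sum_{z\notin K}\beta_r\to 0$ over finite sets $K$. The uniform domination from Lemma \ref{lemma 3.zero} is what allows the interchange of limit and integral.
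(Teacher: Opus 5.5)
Your proposal is correct and follows the paper's proof. It uses the same double-integral representation of the variance, the same domination $|{\rm Cov}_{\mu_p}(F(\eta_{u+r}),F(\eta_u))|\le g(r)$ from Lemma \ref{lemma 3.zero} (integrable by Assumption \eqref{Assumption 1.6}), and the same pointwise limit coming from $\eta_u\Rightarrow\nu_p$. The differences are minor: you Ces\`aro-average in $u$ for fixed lag $r$ and then apply dominated convergence in $r$, whereas the paper applies dominated convergence in the lag for fixed $s$ and then averages in $s$; and you justify the pointwise limit (via continuity of $S(r)F$) more carefully than the paper, which simply appeals to the definition of $\nu_p$.
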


\proof
According to the bilinear property of the covariance,
\[
{\rm Var}_{\mu_p}(\xi_t^F)=2\int_0^{t}\left(\int_0^s{\rm Cov}_{\mu_p}\left(F(\eta_s), F(\eta_{s-r})\right)dr\right)as.
\]
According to the definition of $\nu_p$, for any $r\geq 0$,
\begin{equation}\label{equ 3.10}
\lim_{s\rightarrow+\infty}{\rm Cov}_{\mu_p}\left(F(\eta_s), F(\eta_{s-r})\right)
={\rm Cov}_{\nu_p}\left(F(\eta_r), F(\eta_{0})\right).
\end{equation}
By Lemma \ref{lemma 3.zero}, we have
\[
|{\rm Cov}_{\mu_p}\left(F(\eta_s), F(\eta_{s-r})\right)|
\leq \frac{2\|H\|_\infty^2}{C_2}\sum_{i=1}^m\sum_{j=1}^m\int_0^{+\infty}p_{v+r}(x_i, x_j)dv
\]
for any $0\leq r\leq s$. Then, by Assumption \eqref{Assumption 1.6} and the dominated convergence theorem,
\[
\lim_{s\rightarrow+\infty}\int_0^s{\rm Cov}_{\mu_p}\left(F(\eta_s), F(\eta_{s-r})\right)dr
=\int_0^{+\infty}{\rm Cov}_{\nu_p}\left(F(\eta_r), F(\eta_{0})\right)dr=\frac{\sigma_F^2}{2}
\]
and hence Lemma \ref{lemma 3.3} holds.

\qed

Now we utilize the martingale decomposition method to prove Theorem \ref{theorem 2.3 main}. We give details of our proof in the case where $\eta_0$ is distributed with $\nu_p$. For the $\mu_p$ case, since the argument is similar, we only give an outline of the proof.

As we have introduced at the end of Section \ref{section two}, we denote by $\{S(t)\}_{t\geq 0}$ the Markov semigroup of $\{\eta_t\}_{t\geq 0}$, i..e,
\[
S(t)F(\eta)=\mathbb{E}_\eta F(\eta_t)
\]
for any $t\geq 0$. According to Proposition \ref{proposition 3.2 dual equivalent},
\[
\mathbb{E}_\eta F(\eta_t)=\sum_{y_1, \ldots, y_m}\beta_t\left((x_1, \ldots, x_m), (y_1, \ldots, y_m)\right)H(\eta, y_1, \ldots, y_m).
\]
By Kolmogorov backward equation,
\begin{align}\label{equ 3.11}
S(N)F(\eta)-F(\eta)&=S(N)F(\eta)-S(0)F(\eta) \notag\\
&=\int_0^N\frac{d}{ds}S(s)F(\eta)ds=\int_0^N\mathcal{L}S(s)F(\eta)ds\notag\\
&=\mathcal{L}\left(\int_0^NS(s)Fds\right)(\eta).
\end{align}
For each $N\geq 1$ and $\eta\in \{0, 1\}^G$, we define
\begin{align*}
G^N(\eta)&=\left(\int_0^NS(s)Fds\right)(\eta)\\
&=\sum_{y_1, \ldots, y_m\in G}H(\eta, y_1, \ldots, y_m)\int_0^N\beta_s\left((x_1, \ldots, x_m), (y_1, \ldots, y_m)\right)ds.
\end{align*}

We further define
\[
M_t^N=G^N(\eta_t)-G^N(\eta_0)-\int_0^t\mathcal{L}G^N(\eta_s)ds.
\]
By Dynkin's martingale formula, $\{M_t^N\}_{t\geq 0}$ is a martingale. By \eqref{equ 3.11},
\begin{equation}\label{equ 3.11 two}
M_t^N=G^N(\eta_t)-G^N(\eta_0)-\int_0^tS(N)F(\eta_s)ds+\int_0^t F(\eta_s)ds.
\end{equation}
Without loss of generality, from now on we further assume that $\mathbb{E}_{\nu_p}F(\eta_0)=0$ and then $\xi_t^F=\int_0^t F(\eta_s)ds$ when $\eta_0$ is distributed with $\nu_p$. Therefore, when $\eta_0$ is distributed with $\nu_p$,
\begin{equation}\label{equ 3.12}
\frac{1}{\sqrt{N}}M_{tN}^N=\frac{1}{\sqrt{N}}G^N(\eta_{tN})-\frac{1}{\sqrt{N}}G^N(\eta_0)-\frac{1}{\sqrt{N}}
\int_0^{tN}S(N)F(\eta_s)ds+\frac{1}{\sqrt{N}}\xi_{tN}^F.
\end{equation}

The following lemma shows that the term
\[
\frac{1}{\sqrt{N}}G^N(\eta_{tN})-\frac{1}{\sqrt{N}}G^N(\eta_0)-\frac{1}{\sqrt{N}}
\int_0^{tN}S(N)F(\eta_s)ds
\]
in decomposition \eqref{equ 3.12} is a small error.

\begin{lemma}\label{lemma 3.4}
Let $\eta_0$ be distributed with $\nu_p$.
For any $t\geq 0$,
\[
\lim_{N\rightarrow+\infty}\left(\frac{1}{\sqrt{N}}G^N(\eta_{tN})-\frac{1}{\sqrt{N}}G^N(\eta_0)-\frac{1}{\sqrt{N}}
\int_0^{tN}S(N)F(\eta_s)ds\right)=0
\]
in $L^2$.
\end{lemma}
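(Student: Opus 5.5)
We show that each of the three terms separately tends to $0$ in $L^2(\mathbb{P}_{\nu_p})$. Since $\nu_p$ is invariant, $\eta_{tN}$ and $\eta_0$ both have law $\nu_p$. Hence it suffices to prove two bounds:
\[
\mathbb{E}_{\nu_p}\big(G^N(\eta_0)\big)^2=o(N)
\]
and
\[
\mathbb{E}_{\nu_p}\Big(\int_0^{tN}S(N)F(\eta_s)ds\Big)^2=o(N).
\]
Note that $\mathbb{E}_{\nu_p}G^N(\eta_0)=\int_0^N\mathbb{E}_{\nu_p}F(\eta_s)ds=0$ because $\mathbb{E}_{\nu_p}F=0$. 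Therefore the first quantity is a variance:
\[
{\rm Var}_{\nu_p}\big(G^N(\eta_0)\big)=\int_0^N\int_0^N{\rm Cov}_{\nu_p}\big(S(u)F(\eta_0),S(v)F(\eta_0)\big)\,du\,dv .
\]
By the Markov property, ${\rm Cov}_{\nu_p}(S(u)F(\eta_0),S(v)F(\eta_0))$ is a covariance of $F$ evaluated at two times of the process. The natural route is to write
\[
S(u)F(\eta_0)\,S(v)F(\eta_0)=\mathbb{E}_{\eta_0}F(\eta_u)\,\mathbb{E}_{\eta_0}F(\eta_v),
\]
which is a product of two conditionally independent copies. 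This is not the same as $\mathbb{E}F(\eta_u)F(\eta_v)$, so Lemma \ref{lemma 3.zero} cannot be applied verbatim.

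Instead, I redo the argument of Lemma \ref{lemma 3.zero} directly via duality. By Proposition \ref{proposition 3.2 dual equivalent},
\[
S(u)F(\eta)=\hat{\mathbb{E}}H(\eta,\mathcal{X}_u^{x_1},\ldots,\mathcal{X}_u^{x_m}).
\]
Take two independent coalescing systems, one run to time $u$ and one to time $v$, with endpoints $Z$ and $Z'$. Then
\[
{\rm Cov}_{\nu_p}\big(S(u)F,S(v)F\big)=\hat{\mathbb{E}}\,{\rm Cov}_{\nu_p}\big(H(\eta_0,Z),H(\eta_0,Z')\big).
\]
I need a spatial decorrelation bound for $\nu_p$. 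Approximate $\nu_p$ by $\mu_p$ at time $T\to\infty$ and apply duality again, exactly as in Lemma \ref{lemma 3.zero} with $r=0$. This gives
\[
|{\rm Cov}_{\nu_p}(H(\eta_0,z),H(\eta_0,z'))|\le \frac{2\|H\|_\infty^2}{C_2}\sum_{i,j}\int_0^\infty p_s(z_i,z'_j)\,ds .
\]
Averaging over $Z,Z'$, each marginal is a walk from $x_i$ (respectively $x_j$) run for time $u$ (respectively $v$). By symmetry the average is
\[
\frac{2\|H\|_\infty^2}{C_2}\sum_{i,j}\int_0^\infty p_{s+u+v}(x_i,x_j)\,ds .
\]
Hence
\[
{\rm Var}_{\nu_p}(G^N)\le C\sum_{i,j}\int_0^N\!\!\int_0^N\!\!\int_0^\infty p_{s+u+v}(x_i,x_j)\,ds\,du\,dv .
\]
Substituting $w=u+v$, the triple integral is at most
\[
\int_0^{2N}w\int_0^\infty p_{s+w}\,ds\,dw .
\]
Assumption \eqref{Assumption 1.6} only says $\int_0^\infty\!\int_0^\infty p_{s+w}\,ds\,dw=\int_0^\infty\phi(w)\,dw<\infty$, where $\phi(w)=\int_0^\infty p_{s+w}ds$. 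Now $\int_0^{2N}w\,\phi(w)\,dw=o(N)$ for any integrable $\phi$: split the range at $\sqrt N$, giving a bound $\sqrt N\|\phi\|_1+2N\int_{\sqrt N}^\infty\phi$. This shows $\frac{1}{\sqrt N}G^N\to 0$ in $L^2$.

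For the third term, stationarity gives
\[
\mathbb{E}_{\nu_p}\Big(\int_0^{tN}S(N)F(\eta_s)ds\Big)^2\le 2tN\int_0^{tN}\big|{\rm Cov}_{\nu_p}\big(S(N)F(\eta_r),S(N)F(\eta_0)\big)\big|\,dr .
\]
Here $\mathbb{E}_{\nu_p}S(N)F=0$. Write $S(N)F(\eta_r)=\mathbb{E}[F(\eta_{r+N})\mid\eta_r]$. Combining duality over $[0,r]$ with the bound above, I expect
\[
|{\rm Cov}|\le C\sum_{i,j}\phi(2N+r).
\]
Then
\[
\frac1N\cdot 2tN\int_0^{tN}\phi(2N+r)\,dr\le 2t\int_{2N}^\infty\phi\to0 .
\]

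The main obstacle is making rigorous the covariance bound for $S(u)F(\eta_0)$ against $S(v)F(\eta_r)$ under $\nu_p$, i.e. a product of two conditional expectations at different times. I would handle it by approximating $\nu_p$ with $\mu_p$ run for a long time $T$ (Proposition \ref{Proposition 2.1}) and using bounded convergence. Under $\mu_p$ the argument is the duality/coupling proof of Lemma \ref{lemma 3.zero}: decouple the two dual families until they first meet, and bound the meeting probability via $C_2$ and the Green function. The time shifts then add up in the exponent of $p$ by the Chapman–Kolmogorov identity and symmetry of $p_t$.
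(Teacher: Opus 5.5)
Your proposal is correct and follows essentially the paper's proof. You reduce by invariance to the two variance bounds \eqref{equ 3.13}--\eqref{equ 3.14}, and expand $S(u)F=\sum_{\vec y}\beta_u(\vec x,\vec y)H(\cdot,\vec y)$ bilinearly. You then bound the resulting $\nu_p$-covariances through Lemma \ref{lemma 3.zero} and the limit $\mu_p\to\nu_p$, collapse the $\beta$'s with Chapman--Kolmogorov and symmetry of $p_t$, and conclude from Assumption \eqref{Assumption 1.6}; your split of $\int_0^{2N}w\,\phi(w)\,dw$ at $\sqrt N$ is only a cosmetic variant of the paper's Ces\`aro step.

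The step you flag as the ``main obstacle'' and leave as ``I expect'' needs no new coupling. Expand both factors of ${\rm Cov}_{\nu_p}(S(N)F(\eta_r),S(N)F(\eta_0))$ exactly as you did for $G^N$. This leaves $\sum_{\vec y,\vec z}\beta_N(\vec x,\vec y)\beta_N(\vec x,\vec z)\,{\rm Cov}_{\nu_p}(H(\eta_r,\vec y),H(\eta_0,\vec z))$. Lemma \ref{lemma 3.zero} applies to this verbatim with time lag $r$, and it yields your bound $C\sum_{i,j}\phi(2N+r)$ directly; this is exactly how the paper argues.
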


\proof
According to the invariance of $\nu_p$, we only need to show that
\begin{equation}\label{equ 3.13}
\lim_{N\rightarrow+\infty}{\rm Var}_{\nu_p}\left(\frac{1}{\sqrt{N}}G^N(\eta_0)\right)=0
\end{equation}
and
\begin{equation}\label{equ 3.14}
\lim_{N\rightarrow+\infty}{\rm Var}_{\nu_p}\left(\frac{1}{\sqrt{N}}
\int_0^{tN}S(N)F(\eta_s)ds\right)=0.
\end{equation}
By the bilinear property of the covariance,
\begin{align*}
{\rm Var}_{\nu_p}\left(G^N(\eta_0)\right)
&=\sum_{y_1, \ldots, y_m, z_1, \ldots, z_m\in G}\Bigg({\rm Cov}_{\nu_p}\left(H(\eta_0, y_1, \ldots, y_m), H(\eta_0, z_1, \ldots, z_m)\right)\\
&\text{\quad\quad}\times\int_0^N\int_0^N\alpha_{\vec{x}}\left(\vec{y}, \vec{z}, s_1, s_2\right)ds_1ds_2\Bigg),
\end{align*}
where $\vec{x}=(x_1, \ldots, x_m), \vec{y}=(y_1, \ldots, y_m), \vec{z}=(z_1, \ldots, z_m)$ and
\[
\alpha_{x_1, \ldots, x_m}\left(\vec{y}, \vec{z}, s_1, s_2\right)
=\beta_{s_1}\left(\vec{x}, \vec{y}\right)\beta_{s_2}\left(\vec{x}, \vec{z}\right).
\]
By Lemma \ref{lemma 3.zero}, we have
\begin{align*}
\left|{\rm Cov}_{\nu_p}\left(H(\eta_0, \vec{y}), H(\eta_0, \vec{z})\right)\right|
&=\lim_{t\rightarrow+\infty}\left|{\rm Cov}_{\mu_p}\left(H(\eta_t, \vec{y}), H(\eta_t, \vec{z})\right)\right|\\
&\leq \frac{2\|H\|_\infty^2}{C_2}\sum_{i=1}^m\sum_{j=1}^m\int_0^{+\infty}p_s(y_i, z_j)ds.
\end{align*}
According to the symmetry of $p_t$ and the Markov property of the coalescing random walk,
\[
\sum_{y_1, \ldots, y_m, z_1, \ldots, z_m\in G}\beta_{s_1}\left(\vec{x}, \vec{y}\right)\beta_{s_2}\left(\vec{x}, \vec{z}\right)
p_s(y_i, z_j)=p_{s+s_1+s_2}(x_i, x_j).
\]
As a result,
\begin{align*}
\frac{1}{N}{\rm Var}_{\nu_p}\left(G^N(\eta_0)\right)&=\frac{O(1)}{N}\sum_{i=1}^m\sum_{j=1}^m\int_0^{+\infty}\left(\int_0^N\int_0^Np_{s+s_1+s_2}(x_i, x_j)d{s_1}ds_2\right)ds\\
&\leq \frac{O(1)}{N}\sum_{i=1}^m\sum_{j=1}^m\int_0^N\left(\int_0^{+\infty}\int_0^{+\infty}p_{s+s_1+s_2}(x_i, x_j)dsds_2\right)ds_1.
\end{align*}
Hence, to prove \eqref{equ 3.13}, we only need to show that
\begin{equation}\label{equ 3.15}
\lim_{s_1\rightarrow+\infty}\int_0^{+\infty}\int_0^{+\infty}p_{s+s_1+s_2}(x_i, x_j)dsds_2=0.
\end{equation}
Since
\[
\int_0^{+\infty}\int_0^{+\infty}p_{s+s_1+s_2}(x_i, x_j)dsds_2=\int^{+\infty}_{\frac{s_1}{2}}\int^{+\infty}_{\frac{s_1}{2}}
p_{s+s_2}(x_i, x_j)dsds_2,
\]
Equation \eqref{equ 3.15} follows from Assumption \eqref{Assumption 1.6} and hence \eqref{equ 3.13} holds.

Now we verify \eqref{equ 3.14}. By the bilinear property of the covariance, the invariance of $\nu_p$ and the definition of $S(N)$,
\begin{align*}
&{\rm Var}_{\nu_p}\left(\int_0^{tN}S(N)F(\eta_s)ds\right)\\
&=2\int_0^{tN}ds_1\int_0^{s_1}\sum_{\vec{y}, \vec{z}\in G^m}\beta_N(\vec{x}, \vec{y})\beta_N(\vec{x}, \vec{z})
{\rm Cov}_{\nu_p}\left(H(\eta_{s_2}, \vec{y}), H(\eta_0, \vec{z})\right)ds_2.
\end{align*}
By Lemma \ref{lemma 3.zero},
\begin{align*}
|{\rm Cov}_{\nu_p}\left(H(\eta_{s_2}, \vec{y}), H(\eta_0, \vec{z})\right)|
&=\lim_{t\rightarrow+\infty}|{\rm Cov}_{\mu_p}\left(H(\eta_{s_2+t}, \vec{y}), H(\eta_t, \vec{z})\right)|\\
&\leq \frac{2\|H\|_\infty^2}{C_2}\sum_{i=1}^m\sum_{j=1}^m\int_0^{+\infty}p_{v+s_2}(y_i, z_j)dv.
\end{align*}
According to the symmetry of $p_t$ and the Markov property of the coalescing random walk,
\[
\sum_{\vec{y}, \vec{z}\in G^m}\beta_{N}\left(\vec{x}, \vec{y}\right)\beta_{N}\left(\vec{x}, \vec{z}\right)
p_{v+s_2}(y_i, z_j)=p_{2N+v+s_2}(x_i, x_j).
\]
As a result,
\[
{\rm Var}_{\nu_p}\left(\int_0^{tN}S(N)F(\eta_s)ds\right)
\leq O(1)N\sum_{1\leq i, j\leq m}\int_0^{+\infty}\int_0^{+\infty}p_{2N+v+s_2}(x_i, x_j)dv ds_2.
\]
Therefore, to prove \eqref{equ 3.14}, we only need to show that
\begin{equation}\label{equ 3.16}
\lim_{N\rightarrow+\infty}\int_0^{+\infty}\int_0^{+\infty}p_{2N+v+s_2}(x_i, x_j)dv ds_2=0.
\end{equation}
Since
\[
\int_0^{+\infty}\int_0^{+\infty}p_{2N+v+s_2}(x_i, x_j)dv ds_2=\int_N^{+\infty}\int_N^{+\infty}p_{v+s_2}(x_i, x_j)dv ds_2,
\]
Equation \eqref{equ 3.16} follows from Assumption \eqref{Assumption 1.6} and hence \eqref{equ 3.14} holds. Since \eqref{equ 3.13} and \eqref{equ 3.14} hold, the proof is complete.
\qed

The following lemma gives the invariance principle of the martingale term in decomposition \eqref{equ 3.12}.

\begin{lemma}\label{lemma 3.5}
Let $T>0$ and $\eta_0$ be distributed with $\nu_p$. As $N\rightarrow+\infty$, the process
\[
\left\{\frac{1}{\sqrt{N}}M_{tN}^N:~0\leq t\leq T\right\}
\]
converges weakly, with respect to the Skorohod topology of $D[0, T]$, to $\{\sigma_FB_t\}_{0\leq t\leq T}$.
\end{lemma}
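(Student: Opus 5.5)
We apply the martingale functional central limit theorem for a triangular array of square-integrable martingales, e.g. Theorem 2.1 of Chapter VIII in Jacod--Shiryaev or the form used in \cite{Komorowski2012}. It suffices to verify two things for $\frac{1}{\sqrt{N}}M^N_{tN}$, $0\le t\le T$. The first is a vanishing-jump condition: $\mathbb{E}_{\nu_p}\sup_{t\le T}\frac{1}{N}|\Delta M^N_{tN}|^2\to 0$. The second is convergence of the quadratic variation: $\frac{1}{N}\langle M^N\rangle_{tN}\to\sigma_F^2t$ in probability for each $t$. The predictable quadratic variation is
\[
\langle M^N\rangle_t=\int_0^t\Gamma^N(\eta_s)ds,\qquad
\Gamma^N(\eta)=\sum_{x}\sum_{y\neq x}q(x,y)\left(G^N(\eta^{x,y})-G^N(\eta)\right)^2 .
\]

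\emph{Step 1 (jumps).} A jump at site $x$ changes $G^N$ by at most
\[
2\|H\|_\infty\sum_{i}\int_0^N\hat{\mathbb{P}}(\mathcal{X}^{x_i}_s=x)ds\leq 2m\|H\|_\infty C_1,
\]
using the duality expression for $G^N$ and Assumption \eqref{Assumption 1.4}. So $|\Delta M^N|$ is uniformly bounded, and after dividing by $\sqrt{N}$ the jump condition holds trivially.

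\emph{Step 2 (mean of the bracket).} Since $\nu_p$ is invariant, $\mathbb{E}_{\nu_p}\frac1N\langle M^N\rangle_{tN}=t\,\mathbb{E}_{\nu_p}\Gamma^N=\frac{1}{N}\mathbb{E}_{\nu_p}(M^N_{tN})^2$. From \eqref{equ 3.12} and Lemma \ref{lemma 3.4}, $\frac{1}{\sqrt N}M^N_{tN}-\frac{1}{\sqrt N}\xi^F_{tN}\to0$ in $L^2$. Combining this with Lemma \ref{lemma asymptotic variance} gives $\frac1N\mathbb{E}(M^N_{tN})^2\to\sigma_F^2t$. Hence $\mathbb{E}_{\nu_p}\Gamma^N\to\sigma_F^2$.

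\emph{Step 3 (law of large numbers for the bracket).} This is the main obstacle. We must show
\[
{\rm Var}_{\nu_p}\left(\frac1N\int_0^{tN}\Gamma^N(\eta_s)ds\right)\to0 .
\]
The plan is to write $\Gamma^N(\eta)$ as a sum over pairs of multi-indices $\vec y,\vec z$ (and flip sites $x$) of products of duality weights $\int_0^N\beta_s(\vec x,\cdot)ds$ times local functions of $\eta$. Since $G^N(\eta^{x,y})-G^N(\eta)$ involves only coordinates $y_i=x$, $\Gamma^N$ is a sum over $x$ of terms of the form $q(x,y)$ times a weight times a function of $\eta$ near $x,y$. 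We then bound
\[
{\rm Cov}_{\nu_p}\left(\Gamma^N(\eta_{s}),\Gamma^N(\eta_0)\right)
\]
by Lemma \ref{lemma 3.zero}, extended by bilinearity to these infinite sums of local functions. The bound is a sum of quantities like $\int_0^\infty p_{v+s}(u,w)dv$ between the locations entering the two copies, weighted by duality weights summable via $C_1$. Summing over locations with the Chapman--Kolmogorov identities, as in the proof of Lemma \ref{lemma 3.4}, should yield
\[
\left|{\rm Cov}_{\nu_p}\left(\Gamma^N(\eta_s),\Gamma^N(\eta_0)\right)\right|\leq C\,\phi(s),\qquad \phi(s)=\sum_{i,j}\int_0^\infty p_{v+s}(x_i,x_j)dv ,
\]
possibly with extra time integrals up to $N$ that cost only $o(N)$. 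Since $\int_0^\infty\phi(s)\,ds\le m^2C_3$ by Assumption \eqref{Assumption 1.6}, the variance of $\frac1N\int_0^{tN}\Gamma^N$ is $O(1/N)$ (or $o(1)$), which gives the claim.

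If the uniform-in-$N$ covariance estimate is too crude, the fallback is to truncate. We would compare $\Gamma^N$ with $\Gamma^{K}$ for fixed large $K$. The $L^1$ difference $\mathbb{E}_{\nu_p}|\Gamma^N-\Gamma^K|$ is controlled by tails $\int_K^N$ of the duality weights, which are small uniformly in $N$ by the same argument that gives \eqref{equ 3.15}. For $\Gamma^K$, which is a quasi-local function with summable structure, the ergodic averaging follows from the covariance decay of Lemma \ref{lemma 3.zero}. The martingale FCLT then gives convergence to $\sigma_FB$ in $D[0,T]$.
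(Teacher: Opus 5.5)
Your proposal is essentially the paper's proof. You use the same reduction to convergence in probability of $\frac{1}{N}\langle M^N\rangle_{tN}$. You identify its mean the same way, via $\mathbb{E}_{\nu_p}\langle M^N\rangle_{tN}=\mathbb{E}_{\nu_p}(M^N_{tN})^2$, \eqref{equ 3.12}, Lemma \ref{lemma 3.4} and Lemma \ref{lemma asymptotic variance}. You control its variance the same way, by expanding $\Gamma^N$ into weighted local functions and applying Lemma \ref{lemma 3.zero}. Your explicit bounded-jump check, which the paper leaves implicit, is a small bonus.

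One correction to Step 3: the weights of $\Gamma^N$ are not summable uniformly in $N$ "via $C_1$" (that only gives $O(N)$). They are summable because both factors of the square need a coordinate at the flipped site $w$, so $\sum_w p_{s_1}(x_i,w)p_{s_2}(x_j,w)=p_{s_1+s_2}(x_i,x_j)$, and Assumption \eqref{Assumption 1.6} then bounds the total weight by a constant times $m^2C_3$. You also do not need the pointwise bound $C\phi(s)$, which is not what the Chapman--Kolmogorov summation produces here. Either the paper's route (decay of each covariance term plus dominated convergence) finishes the argument, or you can integrate the bound of Lemma \ref{lemma 3.zero} over the lag, which is at most $C_3$ per pair of positions; the latter even gives your $O(1/N)$ rate.
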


\proof
We denote by $\{\langle M^N\rangle_t\}_{t\geq 0}$ the predictable quadratic variation process of $\{M_t^N\}_{t\geq 0}$.  By Theorem 1.4 in Chapter 7 of \cite{Ethier1986}, to complete the proof, we only need to show that
\[
\lim_{N\rightarrow+\infty}\frac{1}{N}\langle M^N\rangle_{tN}=\sigma_F^2t
\]
in probability for any $t\geq 0$. Hence, we only need to show that
\begin{equation}\label{equ 3.17}
\lim_{N\rightarrow+\infty}\frac{1}{N}\mathbb{E}_{\nu_p}\langle M^N\rangle_{tN}=\sigma_F^2 t
\end{equation}
and
\begin{equation}\label{equ 3.18}
\lim_{N\rightarrow+\infty}{\rm Var}_{\nu_p}\left(\frac{1}{N}\langle M^N\rangle_{tN}\right)=0.
\end{equation}
Since $\mathbb{E}_{\nu_p}\langle M^N\rangle_{t}={\rm Var}_{\nu_p}(M_t^N)$, Equation \eqref{equ 3.17} follows from \eqref{equ 3.12} and Lemma \ref{lemma 3.4}. Hence we only need to verify \eqref{equ 3.18}. By Dynkin's martingale formula and definition of $\mathcal{L}$,
\[
\langle M^N\rangle_t=\int_0^t\left(\sum_{w\in G}\sum_{v\in G, v\neq w}q(w,v)\left(G^N(\eta_s^{w,v})-G^N(\eta_s)\right)^2\right)ds.
\]
According to the definition of $G^N$,
\[
G^N(\eta^{w, v})-G^N(\eta)=\sum_{\vec{y}\in G^m}\left(H(\eta^{w, v}, \vec{y})-H(\eta, \vec{y})\right)\int_0^N\beta_s(\vec{x}, \vec{y})ds.
\]
When $y_i\neq w$ for all $1\leq i\leq m$, $H(\eta^{w, v}, \vec{y})=H(\eta, \vec{y})$. Hence,
\[
G^N(\eta^{w, v})-G^N(\eta)=\sum_{\vec{y}\in \mathcal{B}(w)}\left(H(\eta^{w, v}, \vec{y})-H(\eta, \vec{y})\right)\int_0^N\beta_s(\vec{x}, \vec{y})ds,
\]
where
\[
\mathcal{B}(w)=\left\{\vec{y}\in G^m:~y_i=w\text{~for some~}1\leq i\leq m\right\}.
\]
Consequently,
\begin{align}\label{equ 3.19}
&\left(G^N(\eta^{w, v})-G^N(\eta)\right)^2\\
&=\sum_{\vec{y}, \vec{z}\in \mathcal{B}(w)}\left(H(\eta^{w, v}, \vec{y})-H(\eta, \vec{y})\right)\left(H(\eta^{w, v}, \vec{z})-H(\eta, \vec{z})\right)\int_0^N\int_0^N\beta_{s_1}(\vec{x}, \vec{y})\beta_{s_2}(\vec{x}, \vec{z})ds_1ds_2.  \notag
\end{align}
According to the definition of $\mathcal{B}(w)$,
\begin{align*}
\sum_{\vec{y}\in \mathcal{B}(w)}\beta_s(\vec{x}, \vec{y})
=\hat{\mathbb{P}}\left(\mathcal{X}_s^{x_i}=w\text{~for some~}i\right)\leq \sum_{i=1}^mp_s(x_i, w).
\end{align*}
Hence,
\begin{align*}
&\sum_{w\in G}\sum_{v\in G, v\neq w}q(w, v)\sum_{\vec{y}, \vec{z}\in \mathcal{B}(w)}\int_0^N\int_0^N\beta_{s_1}(\vec{x}, \vec{y})\beta_{s_2}(\vec{x}, \vec{y})ds_1ds_2\\
&\leq C_5\sum_{i=1}^m\sum_{j=1}^m\int_0^{+\infty}\int_0^{+\infty}\sum_{w\in G}p_{s_1}(x_i, w)p_{s_2}(x_j, w)ds_1ds_2\\
&= C_5\sum_{i=1}^m\sum_{j=1}^m\int_0^{+\infty}\int_0^{+\infty}p_{s_1+s_2}(x_i, x_j)ds_1ds_2<+\infty
\end{align*}
under Assumption \eqref{Assumption 1.6}, where $C_5=\sup_{w\in G}\sum_{v\in G, v\neq w}q(w, v)<+\infty$. Then, by \eqref{equ 3.19} and the fact that
\[
\left|\left(H(\eta^{w, v}, \vec{y})-H(\eta, \vec{y})\right)\left(H(\eta^{w, v}, \vec{z})-H(\eta, \vec{z})\right)\right|
\leq 4\|H\|_\infty^2,
\]
to verify \eqref{equ 3.18} we only need to show that
\begin{equation}\label{equ 3.20}
\lim_{s\rightarrow+\infty}\left(\sup_{r\geq 0}\left|{\rm Cov}_{\nu_p}\left(\tilde{H}(\eta_{r+s}, \vec{y}, \vec{z}, w, v), \tilde{H}(\eta_{r}, \vec{u}, \vec{h}, \tilde{w}, \tilde{v})\right)\right|\right)=0
\end{equation}
for any $\vec{y}, \vec{z}, \vec{u}, \vec{h}\in G^m, w, v, \tilde{w}, \tilde{v}\in G$ according to the bilinear property of the covariance and the dominated convergence theorem, where
\[
\tilde{H}(\eta, \vec{y}, \vec{z}, w, v)=\left(H(\eta^{w, v}, \vec{y})-H(\eta, \vec{y})\right)\left(H(\eta^{w, v}, \vec{z})-H(\eta, \vec{z})\right).
\]
By Lemma \ref{lemma 3.zero},
\begin{align*}
&\left|{\rm Cov}_{\nu_p}\left(\tilde{H}(\eta_{r+s}, \vec{y}, \vec{z}, w, v), \tilde{H}(\eta_{r}, \vec{u}, \vec{h}, \tilde{w}, \tilde{v})\right)\right|\\
&=\lim_{t\rightarrow+\infty}\left|{\rm Cov}_{\mu_p}\left(\tilde{H}(\eta_{r+s+t}, \vec{y}, \vec{z}, w, v), \tilde{H}(\eta_{r+t}, \vec{u}, \vec{h}, \tilde{w}, \tilde{v})\right)\right|\\
&\leq \frac{2\|\tilde{H}\|_\infty^2}{C_2}\sum_{i=1}^{2m+2}\sum_{j=1}^{2m+2}\int_0^{+\infty}p_{\theta+s}\left(\varsigma_i, \varpi_j\right)d\theta,
\end{align*}
where
\[
\vec{\varsigma}=\left(\vec{y}, \vec{z}, w, v\right) \text{~and~} \vec{\varpi}=\left(\vec{u}, \vec{h}, \tilde{w}, \tilde{v}\right).
\]
Consequently, Equation \eqref{equ 3.20} follows from Assumption \eqref{Assumption 1.6}. Since \eqref{equ 3.20} holds, the proof is complete.

\qed

By Equation \eqref{equ 3.12} and Lemmas \ref{lemma 3.4}, \ref{lemma 3.5}, we have the following corollary.

\begin{corollary}\label{corollary 3.6}
Let $\eta_0$ be distributed with $\nu_p$. For any $0<t_1<t_2<\ldots<t_k$, the $\mathbb{R}^k$-valued random variable
\[
\left(\frac{1}{\sqrt{N}}\xi_{t_1N}^F, \frac{1}{\sqrt{N}}\xi_{t_2N}^F, \ldots, \frac{1}{\sqrt{N}}\xi_{t_kN}^F\right)
\]
converges weakly to $\sigma_F\left(B_{t_1}, B_{t_2}, \ldots, B_{t_k}\right)$ as $N\rightarrow+\infty$.
\end{corollary}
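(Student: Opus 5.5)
The plan is to read the finite-dimensional convergence directly off decomposition \eqref{equ 3.12}. Write $R_t^N$ for the error term
\[
R_t^N=\frac{1}{\sqrt{N}}G^N(\eta_{tN})-\frac{1}{\sqrt{N}}G^N(\eta_0)-\frac{1}{\sqrt{N}}\int_0^{tN}S(N)F(\eta_s)ds .
\]
Then \eqref{equ 3.12} gives
\[
\frac{1}{\sqrt{N}}\xi_{tN}^F=\frac{1}{\sqrt{N}}M_{tN}^N-R_t^N
\]
for every $t\geq 0$, when $\eta_0$ is distributed with $\nu_p$ and $\mathbb{E}_{\nu_p}F(\eta_0)=0$. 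The vector
\[
\left(\frac{1}{\sqrt{N}}\xi_{t_1N}^F,\ldots,\frac{1}{\sqrt{N}}\xi_{t_kN}^F\right)
\]
is therefore the martingale vector $\left(\frac{1}{\sqrt{N}}M^N_{t_1N},\ldots,\frac{1}{\sqrt{N}}M^N_{t_kN}\right)$ minus $(R^N_{t_1},\ldots,R^N_{t_k})$.

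First, I handle the error vector. Lemma \ref{lemma 3.4} gives $R^N_{t_i}\to 0$ in $L^2$ for each fixed $t_i$. Since there are finitely many $t_i$, the whole vector $(R^N_{t_1},\ldots,R^N_{t_k})$ tends to $0$ in $L^2(\mathbb{R}^k)$, and hence in probability.

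Second, I handle the martingale vector using Lemma \ref{lemma 3.5}. It gives weak convergence of $\{\frac{1}{\sqrt{N}}M^N_{tN}\}_{0\leq t\leq T}$ in $D[0,T]$ to $\sigma_F B$, where I take $T\geq t_k$. The limit $\sigma_F B$ has continuous paths. For a process with continuous paths, every time is a continuity point, so the projection $\omega\mapsto(\omega(t_1),\ldots,\omega(t_k))$ is continuous at almost every path of the limit in the Skorohod topology. The continuous mapping theorem then yields the convergence in distribution
\[
\left(\frac{1}{\sqrt{N}}M^N_{t_1N},\ldots,\frac{1}{\sqrt{N}}M^N_{t_kN}\right)\Rightarrow\sigma_F(B_{t_1},\ldots,B_{t_k}).
\]

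Finally, I combine the two steps with Slutsky's lemma. A sequence converging in distribution, minus a sequence converging to $0$ in probability, has the same limit in distribution. This gives the claim.

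I expect no serious obstacle here. The only point needing care is the justification that finite-dimensional projections are a.s. continuous at the limit, which holds because Brownian motion has continuous paths. The substantive work was already done in Lemmas \ref{lemma 3.4} and \ref{lemma 3.5}.
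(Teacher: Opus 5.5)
Your proposal is correct and is exactly the paper's argument: the paper states this corollary as an immediate consequence of decomposition \eqref{equ 3.12} together with Lemmas \ref{lemma 3.4} and \ref{lemma 3.5}. You have made explicit the two routine steps the paper leaves unstated, namely that finite-dimensional projections are continuous in the Skorohod topology at continuous limit paths (so the continuous mapping theorem applies), and that Slutsky's lemma lets you discard the error vector, which vanishes in $L^2$.
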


By Corollary \ref{corollary 3.6}, we only need to verify that $\left\{\frac{1}{\sqrt{N}}\xi_{tN}^F:~0\leq t\leq T\right\}_{N\geq 1}$ are tight with respect to the uniform topology of $C[0, T]$ to complete the proof of Theorem \ref{theorem 2.3 main}. We will show that
\[
\mathbb{E}_{\nu_p}\left(\left(\xi_{tN}^F-\xi_{sN}^F\right)^4\right)=O(1)(t-s)^2N^2
\]
for $s<t$ and then the aforesaid tightness follows from Corollary 14.9 of \cite{Kallenberg1997}. According to Proposition \ref{proposition 3.2 dual equivalent} and the definitions of $\xi_t^F, \nu_p$,
\[
\mathbb{E}_{\nu_p}\left(\left(\xi_{t}^F-\xi_{s}^F\right)^4\right)=24\int_{s}^tds_1\int_s^{s_1}ds_2\int_s^{s_2}ds_3\int_s^{s_4}
\mathbb{E}_{\nu_p}\left(\prod_{j=1}^4F(\eta_{s_j})\right)ds_3
\]
and
\begin{align*}
&\mathbb{E}_{\nu_p}\left(\prod_{j=1}^4F(\eta_{s_j})\right)
=\lim_{\theta\rightarrow+\infty}\mathbb{E}_{\mu_p}\left(\prod_{j=1}^4\left(F(\eta_{\theta+s_j})-\mathbb{E}_{\mu_p}F(\eta_{\theta+s_j})\right)\right)\\
&=\lim_{\theta\rightarrow+\infty}\hat{\mathbb{E}}\mathbb{E}_{\mu_p}\left(\prod_{j=1}^4\left(H(\eta_0, \mathcal{X}_{\theta+s_1}^{s_1-s_j, x_1}, \ldots,
\mathcal{X}_{\theta+s_1}^{s_1-s_j, x_m})-\mathbb{E}_{\mu_p}F(\eta_{\theta+s_j})\right)\right)
\end{align*}
for $F(\eta)=H(\eta, x_1,\ldots, x_m)$,
so we need to bound
\[
\hat{\mathbb{E}}\mathbb{E}_{\mu_p}\left(\prod_{j=1}^4\left(H(\eta_0, \mathcal{X}_{s_1}^{s_1-s_j, x_1}, \ldots,
\mathcal{X}_{s_1}^{s_1-s_j, x_m})-\mathbb{E}_{\mu_p}F(\eta_{s_j})\right)\right)
\]
from above for any $s_4\leq s_3\leq s_2\leq s_1$. For this purpose, we introduce some notations. For $0\leq s_4\leq s_3\leq s_2\leq s_1$ and $1\leq k\leq 4$, we define
\[
\left\{\left(\mathcal{X}_{t, k}^{s_1-s_k, x_1}, \mathcal{X}_{t, k}^{s_1-s_k, x_2}, \ldots, \mathcal{X}_{t, k}^{s_1-s_k, x_m}\right)\right\}_{t\geq 0}
\]
as a copy the coalescing random walk $\left\{\left(\mathcal{X}_{t}^{s_1-s_k, x_1}, \mathcal{X}_{t}^{s_1-s_k, x_2}, \ldots, \mathcal{X}_{t}^{s_1-s_k, x_m}\right)\right\}_{t\geq 0}$. We further assume that these four different coalescing random walks are independent. Now we couple these four independent coalescing random walks and the coalescing random walk
\begin{align*}
&\Bigg\{\Big(\mathcal{X}_{t}^{s_1-s_1, x_1}, \mathcal{X}_{t}^{s_1-s_1, x_2}, \ldots, \mathcal{X}_{t}^{s_1-s_1, x_m},
\mathcal{X}_{t}^{s_1-s_2, x_1} \ldots, \mathcal{X}_{t}^{s_1-s_2, x_m}, \\
&\text{\quad\quad}\mathcal{X}_{t}^{s_1-s_3, x_1}, \ldots, \mathcal{X}_{t}^{s_1-s_3, x_m}
, \mathcal{X}_{t}^{s_1-s_4, x_1}, \ldots, \mathcal{X}_{t}^{s_1-s_4, x_m}\Big)\Bigg\}_{t\geq 0}
\end{align*}
in the same probability space. For $0\leq s_4\leq s_3\leq s_2\leq s_1$ and $1\leq l<k\leq 4$, we define
\begin{equation}\label{equ 3.21}
\tau_{l, k}^{s_1, s_2, s_3, s_4}
=\inf\left\{t\geq s_1-s_k:~\mathcal{X}_{t, k}^{s_1-s_k, x_i}=\mathcal{X}_{t, l}^{s_1-s_l, x_j}\text{~for some~}1\leq i\leq j\leq m\right\}.
\end{equation}
We further define $\tilde{\tau}^{s_1, s_2, s_3, s_4}=\inf\{\tau_{l, k}^{s_1, s_2, s_3, s_4}:~1\leq l<k\leq 4\}$. For $t<\tilde{\tau}^{s_1, s_2, s_3, s_4}$, there is no collision for any two active coordinates respectively from two independent coalescing random walks and hence we can define
\[
\mathcal{X}_{t}^{s_1-s_k, x_j}=\mathcal{X}_{t, k}^{s_1-s_k, x_j}
\]
for any $1\leq k\leq 4$, $1\leq j\leq m$ and $t<\tilde{\tau}^{s_1, s_2, s_3, s_4}$ in the sense of coupling. For $t\geq \tilde{\tau}^{s_1, s_2, s_3, s_4}$, since the coalescent of two independent active walks can be equivalently considered as one walk is erased and another is remained, we define $\mathcal{X}_{t}^{s_1-s_k, x_j}$ as follows. When $\tilde{\tau}^{s_1, s_2, s_3, s_4}=\tau_{l, k}^{s_1, s_2, s_3, s_4}$ for $1\leq l<k\leq 4$, we denote by $\hat{l}, \hat{k}$ the two elements in $\{1, 2, 3, 4\}\setminus \{l, k\}$ such that $\hat{l}<\hat{k}$. At any moment $t\geq \tilde{\tau}^{s_1, s_2, s_3, s_4}$, when an active coordinate from $\left(\mathcal{X}_{\cdot, k}^{s_1-s_k, x_1}, \mathcal{X}_{\cdot, k}^{s_1-s_k, x_2}, \ldots, \mathcal{X}_{\cdot, k}^{s_1-s_k, x_m}\right)$ collides with another active coordinate from $\left(\mathcal{X}_{\cdot, l}^{s_1-s_l, x_1}, \mathcal{X}_{\cdot, l}^{s_1-s_l, x_2}, \ldots, \mathcal{X}_{\cdot, l}^{s_1-s_l, x_m}\right)$, then the one from
\[
\left(\mathcal{X}_{\cdot, l}^{s_1-s_l, x_1}, \mathcal{X}_{\cdot, l}^{s_1-s_l, x_2}, \ldots, \mathcal{X}_{\cdot, l}^{s_1-s_l, x_m}\right)
\]
is remained and the one from $\left(\mathcal{X}_{\cdot, k}^{s_1-s_k, x_1}, \mathcal{X}_{\cdot, k}^{s_1-s_k, x_2}, \ldots, \mathcal{X}_{\cdot, k}^{s_1-s_k, x_m}\right)$ is erased. Similarly, when an active coordinate from $\left(\mathcal{X}_{\cdot, \hat{k}}^{s_1-s_{\hat{k}}, x_1}, \mathcal{X}_{\cdot, \hat{k}}^{s_1-s_{\hat{k}}, x_2}, \ldots, \mathcal{X}_{\cdot, \hat{k}}^{s_1-s_{\hat{k}}, x_m}\right)$ collides with another active coordinate from $\left(\mathcal{X}_{\cdot, \hat{l}}^{s_1-s_{\hat{l}}, x_1}, \mathcal{X}_{\cdot, \hat{l}}^{s_1-s_{\hat{l}}, x_2}, \ldots, \mathcal{X}_{\cdot, \hat{l}}^{s_1-s_{\hat{l}}, x_m}\right)$, then the one from
\[
\left(\mathcal{X}_{\cdot, \hat{l}}^{s_1-s_{\hat{l}}, x_1}, \mathcal{X}_{\cdot, \hat{l}}^{s_1-s_{\hat{l}}, x_2}, \ldots, \mathcal{X}_{\cdot, \hat{l}}^{s_1-s_{\hat{l}}, x_m}\right)
\]
is remained and the one from $\left(\mathcal{X}_{\cdot, \hat{k}}^{s_1-s_{\hat{k}}, x_1}, \mathcal{X}_{\cdot, \hat{k}}^{s_1-s_{\hat{k}}, x_2}, \ldots, \mathcal{X}_{\cdot, \hat{k}}^{s_1-s_{\hat{k}}, x_m}\right)$ is erased. Furthermore, when an active coordinate from
\[
\left(\mathcal{X}_{\cdot, k}^{s_1-s_k, x_1}, \mathcal{X}_{\cdot, k}^{s_1-s_k, x_2}, \ldots, \mathcal{X}_{\cdot, k}^{s_1-s_k, x_m},
\mathcal{X}_{\cdot, l}^{s_1-s_l, x_1}, \mathcal{X}_{\cdot, l}^{s_1-s_l, x_2}, \ldots, \mathcal{X}_{\cdot, l}^{s_1-s_l, x_m}\right)
\]
collides with another active coordinate from
\[
\left(\mathcal{X}_{\cdot, \hat{k}}^{s_1-s_{\hat{k}}, x_1}, \mathcal{X}_{\cdot, \hat{k}}^{s_1-s_{\hat{k}}, x_2}, \ldots, \mathcal{X}_{\cdot, \hat{k}}^{s_1-s_{\hat{k}}, x_m}, \mathcal{X}_{\cdot, \hat{l}}^{s_1-s_{\hat{l}}, x_1}, \mathcal{X}_{\cdot, \hat{l}}^{s_1-s_{\hat{l}}, x_2}, \ldots, \mathcal{X}_{\cdot, \hat{l}}^{s_1-s_{\hat{l}}, x_m}\right),
\]
then the one from
\[
\left(\mathcal{X}_{\cdot, \hat{k}}^{s_1-s_{\hat{k}}, x_1}, \mathcal{X}_{\cdot, \hat{k}}^{s_1-s_{\hat{k}}, x_2}, \ldots, \mathcal{X}_{\cdot, \hat{k}}^{s_1-s_{\hat{k}}, x_m}, \mathcal{X}_{\cdot, \hat{l}}^{s_1-s_{\hat{l}}, x_1}, \mathcal{X}_{\cdot, \hat{l}}^{s_1-s_{\hat{l}}, x_2}, \ldots, \mathcal{X}_{\cdot, \hat{l}}^{s_1-s_{\hat{l}}, x_m}\right)
\]
is remained and the one from
\[
\left(\mathcal{X}_{\cdot, k}^{s_1-s_k, x_1}, \mathcal{X}_{\cdot, k}^{s_1-s_k, x_2}, \ldots, \mathcal{X}_{\cdot, k}^{s_1-s_k, x_m},
\mathcal{X}_{\cdot, l}^{s_1-s_l, x_1}, \mathcal{X}_{\cdot, l}^{s_1-s_l, x_2}, \ldots, \mathcal{X}_{\cdot, l}^{s_1-s_l, x_m}\right)
\]
is erased.

For any different $l, k\in \{1, 2, 3, 4\}$ and $x, y\in \{x_1, x_2, \ldots, x_m\}$, we denote by $A_{l, k, x, y}^{s_1, s_2, s_3, s_4}$ the event that
\[
\mathcal{X}_{t,l}^{s_1-s_l, x}=\mathcal{X}_{t, k}^{s_1-s_k, y}
\]
for some $t\geq \max\{s_1-s_k, s_1-s_l\}$. We have the following lemma.

\begin{lemma}\label{lemma 3.7}
For any $1\leq l<k\leq 4$ and $x, y\in \{x_1, x_2, \ldots, x_m\}$,
\[
\int_0^{s_l}\hat{\mathbb{P}}\left(A_{l, k, x, y}^{s_1, s_2, s_3, s_4}\right)ds_k\leq \frac{C_3}{C_2},
\]
where $C_2$ is defined as in Assumption \eqref{Assumption 1.5} and $C_3$ is defined as in Assumption \eqref{Assumption 1.6}.
\end{lemma}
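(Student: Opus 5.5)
The plan is to reduce the event $A_{l,k,x,y}^{s_1,s_2,s_3,s_4}$ to a meeting event of two independent random walks, then bound its probability exactly as in the proof of Lemma \ref{lemma 3.zero}, and finally integrate over $s_k$.

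\emph{Reduction to two walks.} Since $l<k$, we have $s_k\leq s_l$, so $s_1-s_l\leq s_1-s_k$. The walk $\mathcal{X}_{\cdot,l}^{s_1-s_l,x}$ is frozen at $x$ up to time $s_1-s_l$ and then moves as a copy of $X$ from $x$. The walk $\mathcal{X}_{\cdot,k}^{s_1-s_k,y}$ is frozen at $y$ up to time $s_1-s_k$. The two coalescing systems indexed by $l$ and $k$ are independent, and each single coordinate is marginally a (delayed) copy of $X$.

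At time $u:=s_1-s_k$ the $l$-walk has been active for time $s_l-s_k$, so its position there has law $p_{s_l-s_k}(x,\cdot)$. After time $u$ both walks are active and independent. By the Markov property at time $u$, $\hat{\mathbb{P}}(A_{l,k,x,y})$ is therefore bounded by
\[
\sum_{z\in G}p_{s_l-s_k}(x,z)\,\tilde{\mathbb{P}}(\tau_{z,y}<+\infty).
\]

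\emph{Bounding the meeting probability.} Exactly as in the proof of Lemma \ref{lemma 3.zero}, the strong Markov property together with Assumption \eqref{Assumption 1.5} and the symmetry of $p_t$ gives
\[
\tilde{\mathbb{P}}(\tau_{z,y}<+\infty)\leq \frac{1}{C_2}\int_0^{+\infty}p_v(z,y)\,dv.
\]
Summing over $z$ with Chapman--Kolmogorov then yields
\[
\hat{\mathbb{P}}\left(A_{l,k,x,y}\right)\leq \frac{1}{C_2}\int_0^{+\infty}p_{s_l-s_k+v}(x,y)\,dv.
\]

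\emph{Integration over $s_k$.} Substitute $w=s_l-s_k$, so that $w$ ranges over $[0,s_l]\subseteq[0,+\infty)$. This gives
\[
\int_0^{s_l}\hat{\mathbb{P}}(A)\,ds_k\leq \frac{1}{C_2}\int_0^{+\infty}\int_0^{+\infty}p_{w+v}(x,y)\,dv\,dw\leq \frac{C_3}{C_2},
\]
by Assumption \eqref{Assumption 1.6}.

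\emph{Main obstacle.} The only delicate point is the first reduction. One must check that the event concerns the independent copies $\mathcal{X}_{\cdot,l}$ and $\mathcal{X}_{\cdot,k}$ (not the coupled system), so that no interaction with the other coordinates affects the law of the pair of single coordinates. One must also check that meetings before $\max\{s_1-s_k,s_1-s_l\}=s_1-s_k$ are excluded by definition. Given both, conditioning on the $l$-walk's position at time $s_1-s_k$ is legitimate.
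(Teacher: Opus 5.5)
Your proposal is correct and is essentially the paper's proof. The paper likewise applies the Markov property at time $s_1-s_k$ to write $\hat{\mathbb{P}}(A_{l,k,x,y}^{s_1,s_2,s_3,s_4})$ as $\sum_{z\in G}p_{s_l-s_k}(x,z)$ times a two-walk meeting probability, bounds that probability by $\frac{1}{C_2}\int_0^{+\infty}p_v(z,y)\,dv$ as in Lemma \ref{lemma 3.zero}, and concludes by Chapman--Kolmogorov, integration in $s_k$, and Assumption \eqref{Assumption 1.6}. The paper states the first step as an equality rather than your upper bound, which makes no difference.
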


\proof
According to the Markov property,
\begin{align*}
\hat{\mathbb{P}}\left(A_{l, k, x, y}^{s_1, s_2, s_3, s_4}\right)
=\sum_{z\in G}p_{s_l-s_k}(x, z)\hat{\mathbb{P}}\left(\mathcal{X}_{t, l}^{0, z}=\mathcal{X}_{t, k}^{0, y}\text{~for some~}t>0\right).
\end{align*}
As we have shown in the proof of Lemma \ref{lemma asymptotic variance},
\[
\hat{\mathbb{P}}\left(\mathcal{X}_{t, l}^{0, z}=\mathcal{X}_{t, k}^{0, y}\text{~for some~}t>0\right)\leq \frac{1}{C_2}\int_0^{+\infty}p_s(z, y)ds
\]
and hence
\[
\hat{\mathbb{P}}(A_{l, k, x, y}^{s_1, s_2, s_3, s_4})
\leq \frac{1}{C_2}\int_0^{+\infty}p_{s_l-s_k+\theta}(x, y)d\theta.
\]
Therefore, Lemma \ref{lemma 3.7} follows from Assumption \eqref{Assumption 1.6}.
\qed

For different $l, k, j\in \{1, 2, 3, 4\}$ and $x, y, z\in \{x_1, \ldots, x_m\}$, we denote by $\mathcal{H}_{l, k, j, x, y, z}^{s_1, s_2, s_3, s_4}$ the event
\[
A_{l, k, x, y}^{s_1, s_2, s_3, s_4}\bigcap A_{l, j, x, z}^{s_1, s_2, s_3, s_4}.
\]
We have the following lemma.

\begin{lemma}\label{lemma 3.8}
For different $l, k, j\in \{1, 2, 3, 4\}$ and $x, y, z\in \{x_1, \ldots, x_m\}$,
\[
\int_0^{s_a}ds_b\int_0^{s_b}\hat{\mathbb{P}}\left(\mathcal{H}_{l, k, j, x, y, z}^{s_1, s_2, s_3, s_4}\right)ds_c\leq 3\left(C_3/C_2\right)^2,
\]
where $a=\min\{l, k, j\}, c=\max\{l, k, j\}$ and $b=\{l, k, j\}\setminus\{a, c\}$.
\end{lemma}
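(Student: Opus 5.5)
We bound $\hat{\mathbb{P}}(\mathcal{H})$ by a product of two single-meeting probabilities, each of the type in Lemma \ref{lemma 3.7}. Then we integrate out the two free times one at a time, gaining a factor $C_3/C_2$ from each integration. Since the walks indexed by $l,k,j$ start at different frozen times, the bound depends on which of $l,k,j$ is the oldest, middle or youngest. This gives three cases according to the position of $l$ among $a<b<c$, which explains the factor $3$.

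Write $d_\iota=s_1-s_\iota$ for the activation time of walk $\iota$; larger index means a smaller $s$ and a later activation. On $\mathcal{H}=A_{l,k,x,y}\cap A_{l,j,x,z}$ the walk $\mathcal{X}_{\cdot,l}^{d_l,x}$ meets both $\mathcal{X}_{\cdot,k}^{d_k,y}$ and $\mathcal{X}_{\cdot,j}^{d_j,z}$. The three walks are independent, since they come from independent copies. Hence, conditionally on the path of walk $l$, the events $A_{l,k,x,y}$ and $A_{l,j,x,z}$ are independent. The key step is to bound each conditional probability by a Green-function quantity and to apply the Markov property at the appropriate times.

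\textbf{Case $l=a$ (walk $l$ is the oldest).} I would first try conditioning on the position of walk $l$ at time $d_b$, and then at $d_c$. Walk $l$ is at $X^x_{s_a-s_b}$ when walk $b$ activates. By the argument in the proof of Lemma \ref{lemma asymptotic variance}, the probability that walk $b$ meets walk $l$ afterwards is at most $\frac{1}{C_2}\int_0^\infty p_\theta(X^x_{s_a-s_b},\cdot)\,d\theta$. The analogous bound holds for walk $c$, using the position $X^x_{s_a-s_c}$. The two meeting events are conditionally independent given walk $l$, but they involve the same path, so I would bound the joint probability using the positions at the two activation times and the Markov property of walk $l$ between them. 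This yields a bound of the form
\[
\frac{1}{C_2^2}\sum_{w}p_{s_b-s_c}(x,w)\int_0^\infty p_{s_a-s_b+\theta}(w,y')\,d\theta\int_0^\infty p_{\theta'}(w,z')\,d\theta'.
\]
Integrating over $s_c\in[0,s_b]$ handles the factor carrying $s_b-s_c$: as in Lemma \ref{lemma 3.7}, Assumption \eqref{Assumption 1.6} gives $\le C_3$ after summing over $w$ and using $\sum_w p_u(x,w)p_\theta(w,z')=p_{u+\theta}(x,z')$. The remaining factor carrying $s_a-s_b$ is integrated over $s_b\in[0,s_a]$ and also gives $\le C_3$. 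The two factors cannot be decoupled completely, since $w$ appears in both. The main obstacle is therefore to order the integrations and Markov decompositions so that each time integral meets exactly one double integral of the form in \eqref{Assumption 1.6}. A supremum over the intermediate point is allowed, because \eqref{Assumption 1.6} is uniform in $x,y$, and the single-sum factor is bounded by $C_1$-type quantities. Doing this reduces the problem to twice applying the computation of Lemma \ref{lemma 3.7}.

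\textbf{Cases $l=b$ and $l=c$.} The same strategy applies with the roles of the activation times permuted. For example, if $l=c$ is the youngest, walks $a$ and $b$ have already moved by the time $l$ activates. One then conditions on the positions of walks $a$ and $b$ at time $d_c$ and uses the $\frac{1}{C_2}\int p$ meeting bound for each, followed by Assumption \eqref{Assumption 1.6} in $s_c$ and then in $s_b$. Each case contributes at most $(C_3/C_2)^2$, and since $l$ occupies exactly one of the three positions among $a,b,c$, the bound $3(C_3/C_2)^2$ covers all cases.
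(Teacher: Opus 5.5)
Your proposal has a genuine gap at its central step: the bound of $\hat{\mathbb{P}}(\mathcal{H}_{l,k,j,x,y,z}^{s_1,s_2,s_3,s_4})$ by essentially a product of two Green-function meeting bounds is asserted, not derived, and it is false in general. Conditioning on the path $\omega$ of $\mathcal{X}_{\cdot,l}^{s_1-s_l,x}$ gives $\hat{\mathbb{P}}(\mathcal{H})=\hat{\mathbb{E}}[f(\omega)g(\omega)]$, where $f$ and $g$ are the conditional probabilities of the two meetings. The estimate $\frac{1}{C_2}\int_0^{+\infty}p_\theta(w,\cdot)\,d\theta$ controls $f$ and $g$ only after averaging over the future of $\omega$. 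Since $f$ and $g$ are functions of the same future path, their product is not controlled by the product of their averages.

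Here is a concrete failure in your case $l=a$. Work on $\mathbb{Z}^d$ with $d\geq 5$, and take $s_a=s_b=s_c$ and $x\neq y=z$. Then $f=g$. Reading $y',z'$ as $y,z$, your bound becomes $C_2^{-2}\bigl(\int_0^{+\infty}p_\theta(x,y)\,d\theta\bigr)^2$. This equals $\hat{\mathbb{P}}(A_{l,k,x,y}^{s_1,s_2,s_3,s_4})^2$, because by translation invariance the meeting bound is an identity there. But $\hat{\mathbb{P}}(\mathcal{H})=\hat{\mathbb{E}}[f^2]>(\hat{\mathbb{E}}f)^2$, since $f$ is not a.s.\ constant. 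You flag this yourself as ``the main obstacle'' but leave it unresolved. Markov decompositions at the deterministic activation times cannot decouple the two meetings; the example has no intermediate activation time at all. Falling back on a supremum and $C_1$-type bounds would also not produce the constant $3(C_3/C_2)^2$.

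The missing idea is to order the two meetings and apply the strong Markov property at the first one, which is a stopping time. For $(l,k,j)=(1,2,3)$ the paper lets $\tau_1,\tau_2$ be the first meeting times of $\mathcal{X}_{\cdot,1}^{0,x}$ with $\mathcal{X}_{\cdot,2}^{s_1-s_2,y}$ and with $\mathcal{X}_{\cdot,3}^{s_1-s_3,z}$. It splits $\mathcal{H}$ into $\{\tau_1\leq s_1-s_3\}$, $\{s_1-s_3\leq\tau_1<\tau_2\}$ and $\{\tau_2<\tau_1\}$.

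On the middle piece, the strong Markov property at $\tau_1$ bounds the probability of the second meeting by $\frac{1}{C_2}\int_0^{+\infty}p_{u-(s_2-s_3)+\theta}(z,v)\,d\theta$. Here $(u,v)=(\tau_1-(s_1-s_2),\mathcal{X}_{\tau_1,1}^{0,x})$, whose law does not depend on $s_3$. Integrating in $s_3$ first gives at most $C_3$ uniformly in $(u,v)$. This leaves $\frac{C_3}{C_2}\hat{\mathbb{P}}(A_{1,2,x,y}^{s_1,s_2,s_3,s_4})$, and Lemma \ref{lemma 3.7} in $s_2$ supplies the second factor $C_3/C_2$.

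The other two pieces are analogous; on $\{\tau_2<\tau_1\}$ the roles are swapped and one integrates in $s_2$ first. The factor $3$ counts these three pieces. It is not explained by the position of $l$ among $a<b<c$: that position is fixed once $(l,k,j)$ is given, so your three cases are alternatives and do not add up.
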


\proof We only deal with the case $(l, k, j)=(1, 2, 3)$, since other cases follow from similar arguments.
Let
\[
\tau_1=\min\{t\geq s_1-s_2:~\mathcal{X}_{t, 1}^{0, x}=\mathcal{X}_{t, 2}^{s_1-s_2, y}\}
\]
and
\[
\tau_2=\min\{t\geq s_1-s_3:~\mathcal{X}_{t, 1}^{0, x}=\mathcal{X}_{t, 3}^{s_1-s_3, z}\},
\]
then
\begin{align*}
\hat{\mathbb{P}}\left(\mathcal{H}_{l, k, j, x, y, z}^{s_1, s_2, s_3, s_4}\right)
=&\hat{\mathbb{P}}\left(\mathcal{H}_{l, k, j, x, y, z}^{s_1, s_2, s_3, s_4}, \tau_1\leq s_1-s_3\right)
+\hat{\mathbb{P}}\left(\mathcal{H}_{l, k, j, x, y, z}^{s_1, s_2, s_3, s_4}, s_1-s_3\leq \tau_1<\tau_2\right)\\
&+\hat{\mathbb{P}}\left(\mathcal{H}_{l, k, j, x, y, z}^{s_1, s_2, s_3, s_4}, \tau_2<\tau_1\right).
\end{align*}
According to the strong Markov property,
\begin{align*}
&\hat{\mathbb{P}}\left(\mathcal{H}_{l, k, j, x, y, z}^{s_1, s_2, s_3, s_4}, s_1-s_3\leq \tau_1<\tau_2\right)\\
&\leq \sum_{w_1\in G}\sum_{w_2\in G}\int_{s_2-s_3}^{+\infty}1_{\{v=w_1\}}p_{u-(s_2-s_3)}(z, w_2)\hat{\mathbb{P}}\left(\mathcal{X}_{t, 1}^{0, w_1}=\mathcal{X}_{t, 2}^{0, w_2}\text{~for some~}t\geq 0\right)m(du, dv),
\end{align*}
where $m(du, dv)$ is the joint distribution of $\left(\tau_1-(s_1-s_2), X_{\tau_1, 1}^{0, x}\right)$. As we have shown in the proof of Lemma \ref{lemma asymptotic variance},
\[
\hat{\mathbb{P}}\left(\mathcal{X}_{t, 1}^{0, w_1}=\mathcal{X}_{t, 2}^{0, w_2}\text{~for some~}t\geq 0\right)
\leq \frac{1}{C_2}\int_0^{+\infty}p_{\theta}(w_1, w_2)d\theta
\]
and hence
\begin{align*}
&\hat{\mathbb{P}}\left(\mathcal{H}_{l, k, j, x, y, z}^{s_1, s_2, s_3, s_4}, s_1-s_3\leq \tau_1<\tau_2\right)\\
&\leq \frac{1}{C_2}\sum_{w_1\in G}\int_{s_2-s_3}^{+\infty}1_{\{v=w_1\}}\left(\int_0^{+\infty}p_{u-(s_2-s_3)+\theta}(z, w_1) d\theta\right) m(du, dv).
\end{align*}
Consequently, by Assumption \eqref{Assumption 1.6},
\begin{align*}
&\int_0^{s_2}\hat{\mathbb{P}}\left(\mathcal{H}_{1, 2, 3, x, y, z}^{s_1, s_2, s_3, s_4}, s_1-s_3\leq \tau_1<\tau_2\right)ds_3\\
&\leq \frac{\sum_{w_1\in G}}{C_2}\int_0^{s_2}\left(\int_{s_2-s_3}^{+\infty}1_{\{v=w_1\}}\left(\int_0^{+\infty}p_{u-(s_2-s_3)+\theta}(z, w_1) d\theta\right) m(du, dv)\right)ds_3\\
&=\frac{\sum_{w_1\in G}}{C_2}\int_0^{+\infty}1_{\{v=w_1\}}\left(\int^{s_2}_{\max\{0, s_2-u\}}\left(\int_0^{+\infty}p_{u-(s_2-s_3)+\theta}(z, w_1) d\theta\right)ds_3\right)m(du, dv)\\
&\leq \frac{C_3}{C_2}\sum_{w_1\in G}\int_0^{+\infty}1_{\{v=w_1\}}m(du, dv)=\frac{C_3}{C_2}\int_0^{+\infty}1m(du)=\frac{C_3}{C_2}\hat{\mathbb{P}}\left(\tau_1<+\infty\right)\\
&=\frac{C_3}{C_2}\hat{\mathbb{P}}\left(A_{1, 2, x, y}^{s_1, s_2, s_3, s_4}\right).
\end{align*}
Hence, by Lemma \ref{lemma 3.7},
\[
\int_0^{s_1}ds_2\int_0^{s_2}\hat{\mathbb{P}}\left(\mathcal{H}_{1, 2, 3, x, y, z}^{s_1, s_2, s_3, s_4}, s_1-s_3\leq \tau_1<\tau_2\right)ds_3
\leq \left(C_3/C_2\right)^2.
\]
According to similar arguments, the above inequality still holds when $\{s_1-s_3\leq \tau_1<\tau_2\}$ is replaced by $\{\tau_1\leq s_1-s_3\}$ or $\{\tau_2<\tau_1\}$ and the proof is complete.
\qed

To bound
\[
\hat{\mathbb{E}}\mathbb{E}_{\mu_p}\left(\prod_{j=1}^4\left(H(\eta_0, \mathcal{X}_{s_1}^{s_1-s_j, x_1}, \ldots,
\mathcal{X}_{s_1}^{s_1-s_j, x_m})-\mathbb{E}_{\mu_p}F(\eta_{s_j})\right)\right)
\]
from above, we further introduce some notations.
For $1\leq l\leq 4$, we denote by $\mathcal{E}_l$ the event that
\[
\tau_{l, k}^{s_1, s_2, s_3, s_4}>s_1
\]
for all $k\in \{1, 2, 3, 4\}\setminus \{l\}$, where $\tau_{l, k}^{s_1, s_2, s_3, s_4}$ is defined as in \eqref{equ 3.21}.

For any $1\leq l<k\leq 4$, we denote by $\mathcal{E}_{lk}$ the event that
\[
\tau_{l, k}^{s_1, s_2, s_3, s_4}\leq s_1.
\]
For $l, k, \hat{l}, \hat{k}$ such that $\{l, k\}\cup \{\hat{l}, \hat{k}\}=\{1, 2, 3, 4\}$ and $\{l, k\}\cap \{\hat{l}, \hat{k}\}=\emptyset$,
we define $\mathcal{E}_{lk, \hat{l}\hat{k}}=\mathcal{E}_{lk}\cap \mathcal{E}_{\hat{l}\hat{k}}$ and
\[
\mathcal{E}_{lk, \hat{l}, \hat{k}}=\mathcal{E}_{lk}\cap\mathcal{E}_{\hat{l}}\cap\mathcal{E}_{\hat{k}}.
\]
For $1\leq l<k<j\leq 4$, we define
\[
\mathcal{E}_{lkj}=\left(\mathcal{E}_{lk}\cap \mathcal{E}_{lj}\right)\cup \left(\mathcal{E}_{lk}\cap \mathcal{E}_{kj}\right)\cup \left(\mathcal{E}_{lj}\cap \mathcal{E}_{kj}\right).
\]

We denote by $\mathcal{E}_{1, 2, 3, 4}$ the event that
\[
\tilde{\tau}^{s_1, s_2, s_3, s_4}>s_1.
\]
For simplicity, we write $(s_1, s_2, s_3, s_4)$ as $\vec{s}$ and write all above events $\mathcal{E}_\cdot$ as $\mathcal{E}_\cdot^{\vec{s}}$ when we need to emphasize the dependence on $s_1, s_2, s_3, s_4$. For simplicity, we denote by $\mathcal{J}^{\vec{s}}$
the random variable
\[
\mathbb{E}_{\mu_p}\left(\prod_{j=1}^4\left(H(\eta_0, \mathcal{X}_{s_1}^{s_1-s_j, x_1}, \ldots,
\mathcal{X}_{s_1}^{s_1-s_j, x_m})-\mathbb{E}_{\mu_p}F(\eta_{s_j})\right)\right).
\]
We bound $\hat{\mathbb{E}}\mathcal{J}^{\vec{s}}$ from above step by step. First we have the following lemma.
\begin{lemma}\label{lemma 3.9}
There exists $C_6<+\infty$ independent of $a, b, B^{\vec{s}}$ such that
\begin{align*}
\int_a^bds_1\int_a^{s_1}ds_2\int_a^{s_2}ds_3\int_a^{s_3}\left|\hat{\mathbb{E}}\left(1_{B^{\vec{s}}}\mathcal{J}^{\vec{s}}\right)\right|ds_4
\leq C_6(b-a)^2
\end{align*}
for any $0<a<b$ and $B^{\vec{s}}=\mathcal{E}_{lk, \hat{l}\hat{k}}^{\vec{s}}$ or $\mathcal{E}_{lkj}^{\vec{s}}$ for any different $l, k, j\in \{1, 2, 3, 4\}$ and $\{\hat{l}, \hat{k}\}=\{1, 2, 3, 4\}\setminus\{l, k\}$.
\end{lemma}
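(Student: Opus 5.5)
The plan is to bound $|\hat{\mathbb{E}}(1_{B^{\vec{s}}}\mathcal{J}^{\vec{s}})|$ crudely by $16\|H\|_\infty^4\,\hat{\mathbb{P}}(B^{\vec{s}})$. This holds because each factor in $\mathcal{J}^{\vec{s}}$ is bounded by $2\|H\|_\infty$. The whole task then reduces to showing that the fourfold time integral of $\hat{\mathbb{P}}(B^{\vec{s}})$ over $a\le s_4\le s_3\le s_2\le s_1\le b$ is $O((b-a)^2)$. The heuristic is that each of these events forces at least two ``independent'' collisions between the four independent coalescing families. Each collision costs one time integration with an $O(1)$ bound, by Lemma \ref{lemma 3.7} or Lemma \ref{lemma 3.8}, and the two remaining free integrations each contribute a factor $b-a$.

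\emph{Step 1: reduce to the events $A$ and $\mathcal{H}$.} On $\mathcal{E}_{lk}$ we have $\tau^{\vec{s}}_{l,k}\le s_1$, so some coordinates $x,y\in\{x_1,\ldots,x_m\}$ of families $l$ and $k$ meet. Hence $\mathcal{E}_{lk}\subseteq\bigcup_{x,y}A^{\vec{s}}_{l,k,x,y}$, and a union bound costs only a factor $m^2$. Similarly, $\mathcal{E}_{lk}\cap\mathcal{E}_{lj}\subseteq\bigcup_{x,y,z}$ of events of the form $A_{l,k,x,y}\cap A_{l,j,x',z}$. Here I must deal with the fact that $\mathcal{H}$ uses the \emph{same} coordinate $x$ of family $l$ in both collisions. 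Since the proof of Lemma \ref{lemma 3.8} works verbatim when the two colliding walks of family $l$ start from different $x,x'$ (they are marginally random walks, and the strong Markov argument only uses the position of one walk at $\tau_1$ and the transition kernel of the other), I will use Lemma \ref{lemma 3.8} in that mildly generalized form. The other pieces of $\mathcal{E}_{lkj}$ have the same shape with a different common index ($k$ in $\mathcal{E}_{lk}\cap\mathcal{E}_{kj}$, $j$ in $\mathcal{E}_{lj}\cap\mathcal{E}_{kj}$). Since the order of indices in $A$ is symmetric, these are again $\mathcal{H}$-type events with the shared family playing the role of $l$.

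\emph{Step 2: integrate.} For $B=\mathcal{E}_{lkj}$, apply Lemma \ref{lemma 3.8} to the two integrations over $s_b$ and $s_c$ (the middle and largest indices among $l,k,j$). After possibly reordering the iterated integral by Fubini, this yields a bound $3(C_3/C_2)^2$ uniformly in the other variables. The two remaining variables range over a set of measure at most $(b-a)^2$. The reordering is the delicate point: the constraint structure $s_4\le s_3\le s_2\le s_1$ has to be enlarged, for instance by letting the integrated variables range over $[0,s_a]$ as in the lemma, which only increases the bound since the integrand is nonnegative. For $B=\mathcal{E}_{lk,\hat l\hat k}$, bound $\hat{\mathbb{P}}(A_{l,k,\cdot}\cap A_{\hat l,\hat k,\cdot})$. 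The families are independent and the pairs $\{l,k\}$ and $\{\hat l,\hat k\}$ are disjoint, so this probability factorizes as $\hat{\mathbb{P}}(A_{l,k,\cdot})\hat{\mathbb{P}}(A_{\hat l,\hat k,\cdot})$. Integrating $s_{\max(l,k)}$ over $[0,s_{\min(l,k)}]$ and $s_{\max(\hat l,\hat k)}$ over $[0,s_{\min(\hat l,\hat k)}]$ via Lemma \ref{lemma 3.7} gives $(C_3/C_2)^2$. The remaining two variables again give $(b-a)^2$.

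The main obstacle I expect is this Fubini bookkeeping. After enlarging the domain, one has to check that the two ``collision'' variables can really be integrated out while the other two stay confined to $[a,b]$. The bounds in Lemmas \ref{lemma 3.7} and \ref{lemma 3.8} depend on the $s$'s only through differences, and they are uniform in the fixed variables, so I will integrate the collision variables first over a superset and the free variables last. The constant is then $C_6=16\|H\|_\infty^4 m^6\cdot 3\cdot 3(C_3/C_2)^2$ (or similar), which is independent of $a$, $b$ and $B^{\vec{s}}$.
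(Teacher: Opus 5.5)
Your proposal is correct, and its skeleton is the paper's. You bound $|\mathcal{J}^{\vec s}|$ by a constant. You reduce $\mathcal{E}_{lk,\hat l\hat k}^{\vec s}$ to $A_{l,k,x,y}^{\vec s}\cap A_{\hat l,\hat k,z,w}^{\vec s}$ and use independence of the families plus Lemma \ref{lemma 3.7}. You reduce $\mathcal{E}_{lkj}^{\vec s}$ to two collisions sharing one family and use Lemma \ref{lemma 3.8}. Finally, you integrate the two collision variables over enlarged ranges $[0,s_\cdot]$ while the other two stay in $[a,b]$.

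The genuine difference is the subcase where the two collisions involve different coordinates $x\neq z$ of the shared family $l$. The paper uses Lemma \ref{lemma 3.8} only in its literal form, with the same coordinate $x$. It therefore splits according to where the within-family coalescence time $\tilde\varphi_1$ of $\mathcal{X}_{\cdot,l}^{s_1-s_l,x}$ and $\mathcal{X}_{\cdot,l}^{s_1-s_l,z}$ falls relative to the two cross-family collision times. It also introduces an auxiliary independent copy $\tilde{\mathcal{X}}_{\cdot,l}^{s_1-s_l,z}$ with erasure conventions, so that each subcase lands in an $\mathcal{H}$-event or in a product of independent $A$-events.

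You bypass all of this by noting that the proof of Lemma \ref{lemma 3.8} never uses that both collisions involve the same walk. Each of its three subcases only needs two facts. First, given $\mathcal{F}_\tau$ at the relevant stopping time (or at the deterministic time $s_1-s_3$), the future of \emph{any} single coordinate of the coalescing family $l$ is a random walk from its current position. Second, the other family is independent of family $l$ with explicit marginal law. This is true, so your route is shorter and avoids the fairly delicate coupling and erasure bookkeeping. The paper's route only buys the convenience of citing Lemma \ref{lemma 3.8} exactly as stated.

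When you write it up, check the three subcases explicitly rather than saying ``verbatim.'' In the subcase $\{\tau_2<\tau_1\}$ you condition at $\tau_2$ on the position of $\mathcal{X}_{\cdot,l}^{s_1-s_l,x}$, which is not the walk defining $\tau_2$. Also, when the shared family is not the smallest of the three indices, the $s$-integration needs a small time-shift change of variables; the paper also leaves this to ``similar arguments.''
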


\proof
When the event $B^{\vec{s}}=\mathcal{E}_{lk, \hat{l}\hat{k}}^{\vec{s}}$ occurs for some $l<k, \hat{l}<\hat{k}$, there exists $x, y, z, w\in \{x_1, \ldots, x_m\}$ such that the event
\[
A_{l, k, x, y}^{s_1, s_2, s_3, s_4}\bigcap A_{\hat{l}, \hat{k}, z, w}^{s_1, s_2, s_3, s_4}
\]
occurs. Since $\{l, k\}\cap \{\hat{l}, \hat{k}\}=\emptyset$, the events $A_{l, k, x, y}^{s_1, s_2, s_3, s_4}$ and $A_{\hat{l}, \hat{k}, z, w}^{s_1, s_2, s_3, s_4}$ are independent. Therefore,
by Lemma \ref{lemma 3.7},
\begin{align*}
&\int_a^bds_1\int_a^{s_1}ds_2\int_a^{s_2}ds_3\int_a^{s_3}\hat{\mathbb{P}}\left(A_{l, k, x, y}^{s_1, s_2, s_3, s_4}\bigcap A_{\hat{l}, \hat{k}, z, w}^{s_1, s_2, s_3, s_4}\right)ds_4\\
&\leq \int_a^b (C_3/C_2)ds_l\int_a^b (C_3/C_2)ds_{\hat{l}}=(C_3/C_2)^2(b-a)^2
\end{align*}
and hence
\begin{align*}
&\int_a^bds_1\int_a^{s_1}ds_2\int_a^{s_2}ds_3\int_a^{s_3}\left|\hat{\mathbb{E}}\left(1_{B^{\vec{s}}}\mathbb{E}_{\mu_p}\left(\prod_{j=1}^4H(\eta_0, \mathcal{X}_{s_1}^{s_1-s_j, x_1}, \ldots,
\mathcal{X}_{s_1}^{s_1-s_j, x_m})\right)\right)\right|ds_4\\
&=O(1)(b-a)^2.
\end{align*}

When $B^{\vec{s}}=\mathcal{E}_{lkj}^{\vec{s}}$ occurs, then one of $\mathcal{E}_{lk}\cap \mathcal{E}_{lj}, \mathcal{E}_{lk}\cap \mathcal{E}_{kj}, \mathcal{E}_{lj}\cap \mathcal{E}_{kj}$ occurs. When $\mathcal{E}_{lk}\cap \mathcal{E}_{lj}$ occurs, there exists $x, y, z, w\in \{x_1, \ldots, x_m\}$ such that
\[
A_{l, k, x, y}^{s_1, s_2, s_3, s_4}\bigcap A_{l, j, z, w}^{s_1, s_2, s_3, s_4}
\]
occurs. When $x=z$, then the event $\mathcal{H}_{l, k, j, x, y, w}^{s_1, s_2, s_3, s_4}$ occurs. By Lemma \ref{lemma 3.8},
\begin{align*}
&\int_a^bds_1\int_a^{s_1}ds_2\int_a^{s_2}ds_3\int_a^{s_3}\hat{\mathbb{P}}\left(\mathcal{H}_{l, k, j, x, y, w}^{s_1, s_2, s_3, s_4}\right)ds_4\\
&\leq \int_a^b ds_{\hat{l}}\int_a^b 3(C_3/C_2)^2 ds_{\hat{a}}=3(b-a)^2(C_3/C_2)^2,
\end{align*}
where $\hat{a}=\max\{l, k, j\}$ and $\hat{l}=\{1, 2, 3, 4\}\setminus\{l, k, j\}$.

When $x\neq z$, we denote by $\tilde{\varphi}_1$ the collision moment of $\mathcal{X}_{\cdot, l}^{s_1-s_l, x}$ and $\mathcal{X}_{\cdot, l}^{s_1-s_l, z}$, by $\tilde{\varphi}_2$ the first collision moment of $\mathcal{X}_{\cdot, l}^{s_1-s_l, x}$ and $\mathcal{X}_{\cdot, k}^{s_1-s_k, y}$, by $\tilde{\varphi}_3$ the first collision moment of $\mathcal{X}_{\cdot, l}^{s_1-s_l, z}$ and $\mathcal{X}_{\cdot, j}^{s_1-s_j, w}$. Let

When $\tilde{\varphi}_1<\min\{\tilde{\varphi}_2, \tilde{\varphi}_3\}$, then the event
\[
\mathcal{H}_{l, k, j, x, y, w}^{s_1, s_2, s_3, s_4}\bigcup \mathcal{H}_{l, k, j, z, y, w}^{s_1, s_2, s_3, s_4}
\]
occurs. In detail, when $\mathcal{X}_{\cdot, l}^{s_1-s_l, z}$ (resp. $\mathcal{X}_{\cdot, l}^{s_1-s_l, x}$) is erased after $\tilde{\varphi}_1$, the event
$\mathcal{H}_{l, k, j, x, y, w}^{s_1, s_2, s_3, s_4}$ (resp. $\mathcal{H}_{l, k, j, z, y, w}^{s_1, s_2, s_3, s_4}$) occurs. By Lemma \ref{lemma 3.8},
\begin{align*}
&\int_a^bds_1\int_a^{s_1}ds_2\int_a^{s_2}ds_3\int_a^{s_3}\hat{\mathbb{P}}\left(\mathcal{H}_{l, k, j, x, y, w}^{s_1, s_2, s_3, s_4}\bigcup \mathcal{H}_{l, k, j, z, y, w}^{s_1, s_2, s_3, s_4}\right)ds_4\\
&\leq 6(b-a)^2(C_3/C_2)^2.
\end{align*}
When $\tilde{\varphi}_1>\max\{\tilde{\varphi}_2, \tilde{\varphi}_3\}$, the event
\[
A_{l, k, x, y}^{s_1, s_2, s_3, s_4}\bigcap \tilde{A}_{l, j, z, w}^{s_1, s_2, s_3, s_4}
\]
occurs, where $\tilde{A}_{l, j, z, w}^{s_1, s_2, s_3, s_4}$ is the event that
\[
\tilde{\mathcal{X}}_{t,l}^{s_1-s_l, z}=\mathcal{X}_{t, j}^{s_1-s_j, w}
\]
for some $t\geq \max\{s_1-s_j, s_1-s_l\}$, where $\tilde{\mathcal{X}}_{\cdot, l}^{s_1-s_l, z}$ is a copy of $\mathcal{X}_{\cdot, l}^{s_1-s_l, z}$ such that $\tilde{\mathcal{X}}_{\cdot, l}^{s_1-s_l, z}$ is independent of $\mathcal{X}_{\cdot, l}^{s_1-s_l, x}$ and
\[
\tilde{\mathcal{X}}_{t, l}^{s_1-s_l, z}=\mathcal{X}_{t, l}^{s_1-s_l, z}
\]
for $0\leq t\leq \tilde{\varphi}_1$. Note that $\tilde{A}_{l, j, z, w}^{s_1, s_2, s_3, s_4}$ is independent of $A_{l, k, x, y}^{s_1, s_2, s_3, s_4}$. Therefore, by Lemma \ref{lemma 3.7},
\begin{align*}
&\int_a^bds_1\int_a^{s_1}ds_2\int_a^{s_2}ds_3\int_a^{s_3}\hat{\mathbb{P}}\left(A_{l, k, x, y}^{s_1, s_2, s_3, s_4}\bigcap \tilde{A}_{l, j, z, w}^{s_1, s_2, s_3, s_4}\right)ds_4 \\
&\leq \int_a^bds_{\hat{l}}\int_a^b(C_3/C_2)^2ds_{\hat{a}}=(C_3/C_2)^2(b-a)^2.
\end{align*}
According to the definition of the coalescing random walk, we can further assume that the process $\mathcal{X}_{\cdot, l}^{s_1-s_l, x}$ (resp. $\tilde{\mathcal{X}}_{\cdot, l}^{s_1-s_l, z}$) is remained and $\tilde{\mathcal{X}}_{\cdot, l}^{s_1-s_l, z}$ (resp. $\mathcal{X}_{\cdot, l}^{s_1-s_l, x}$) is erased after $t\geq \tilde{\varphi}_1$ when $x=x_i, z=x_j$ for some $i<j$ (resp. $i>j$). Therefore, when $\tilde{\varphi}_2\leq \tilde{\varphi}_1\leq \tilde{\varphi}_3$, the event
\[
\mathcal{H}_{l, k, j, x, y, w}^{s_1, s_2, s_3, s_4}\bigcup \left(A_{l, k, x, y}^{s_1, s_2, s_3, s_4}\bigcap \tilde{A}_{l, j, z, w}^{s_1, s_2, s_3, s_4}\right)
\]
occurs. In detail, when $\mathcal{X}_{\cdot, l}^{s_1-s_l, z}$ (resp. $\mathcal{X}_{\cdot, l}^{s_1-s_l, x}$) is erased after $\tilde{\varphi}_1$, the event
$\mathcal{H}_{l, k, j, x, y, w}^{s_1, s_2, s_3, s_4}$ (resp. $A_{l, k, x, y}^{s_1, s_2, s_3, s_4}\bigcap \tilde{A}_{l, j, z, w}^{s_1, s_2, s_3, s_4}$) occurs.
Similarly, when $\tilde{\varphi}_3\leq \tilde{\varphi}_1\leq \tilde{\varphi}_2$, the event
\[
\mathcal{H}_{l, k, j, z, y, w}^{s_1, s_2, s_3, s_4}\bigcup \left(A_{l, k, x, y}^{s_1, s_2, s_3, s_4}\bigcap \tilde{A}_{l, j, z, w}^{s_1, s_2, s_3, s_4}\right)
\]
occurs. By Lemmas \ref{lemma 3.7} and \ref{lemma 3.8},
\begin{align*}
&\int_a^bds_1\int_a^{s_1}ds_2\int_a^{s_2}ds_3\int_a^{s_3}\hat{\mathbb{P}}\left(\mathcal{H}_{l, k, j, z, y, w}^{s_1, s_2, s_3, s_4}
\bigcup\left(A_{l, k, x, y}^{s_1, s_2, s_3, s_4}\bigcap \tilde{A}_{l, j, z, w}^{s_1, s_2, s_3, s_4}\right)\right)ds_4 \\
&\leq 4(C_3/C_2)^2(b-a)^2.
\end{align*}
As a result, we have
\begin{align*}
&\int_a^bds_1\int_a^{s_1}ds_2\int_a^{s_2}ds_3\int_a^{s_3}\left|\hat{\mathbb{E}}\left(1_{B^{\vec{s}}}\mathbb{E}_{\mu_p}\left(\prod_{j=1}^4H(\eta_0, \mathcal{X}_{s_1}^{s_1-s_j, x_1}, \ldots,
\mathcal{X}_{s_1}^{s_1-s_j, x_m})\right)\right)\right|ds_4\\
&=O(1)(b-a)^2
\end{align*}
when $B^{\vec{s}}=\mathcal{E}_{lk}^{\vec{s}}\cap \mathcal{E}_{lj}^{\vec{s}}$ and hence when $B^{\vec{s}}=\mathcal{E}_{lkj}^{\vec{s}}$. In conclusion, the proof is complete.
\qed

Then, we have the following lemma.
\begin{lemma}\label{lemma 3.10}
There exists $C_7<+\infty$ independent of $a, b, B^{\vec{s}}$ such that
\begin{align*}
\int_a^bds_1\int_a^{s_1}ds_2\int_a^{s_2}ds_3\int_a^{s_3}\left|\hat{\mathbb{E}}\left(1_{B^{\vec{s}}}\mathcal{J}^{\vec{s}}\right)\right|ds_4
\leq C_7(b-a)^2
\end{align*}
for any $0<a<b$ and $B^{\vec{s}}=\mathcal{E}_{lk, \hat{l}, \hat{k}}^{\vec{s}}$ for any different $l, k\in \{1, 2, 3, 4\}$ and $\{\hat{l}, \hat{k}\}=\{1, 2, 3, 4\}\setminus\{l, k\}$.
\end{lemma}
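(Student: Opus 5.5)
The idea is to show that on $\mathcal{E}_{lk,\hat l,\hat k}^{\vec s}$ the two ``free'' groups $\hat l,\hat k$ contribute a centered factor whose mean vanishes, up to error events that force a second collision. Those error events are then controlled by the double-collision estimates already obtained, namely Lemmas \ref{lemma 3.7}, \ref{lemma 3.8} and the argument of Lemma \ref{lemma 3.9}.

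First I would construct an auxiliary system. I keep the coupled coalescing system of groups $l$ and $k$ as in the construction before \eqref{equ 3.21}. For groups $\hat l$ and $\hat k$, I use the independent copies $\mathcal{X}_{\cdot,\hat l}^{s_1-s_{\hat l},\cdot}$ and $\mathcal{X}_{\cdot,\hat k}^{s_1-s_{\hat k},\cdot}$. These are independent of each other and of groups $l,k$, and are never coalesced with anything outside their own group. I define $\mathcal{J}_1^{\vec s}$ exactly as $\mathcal{J}^{\vec s}$ but with these walks at time $s_1$ for groups $\hat l,\hat k$. By the coupling rule, no coordinate of $\hat l$ or $\hat k$ meets another group before $s_1$ on $\mathcal{E}_{\hat l}\cap\mathcal{E}_{\hat k}$. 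Hence $1_{B^{\vec s}}\mathcal{J}^{\vec s}=1_{B^{\vec s}}\mathcal{J}_1^{\vec s}$. Writing $1_{B^{\vec s}}=1_{\mathcal{E}_{lk}}-1_{\mathcal{E}_{lk}}1_{(\mathcal{E}_{\hat l}\cap\mathcal{E}_{\hat k})^c}$, it suffices to bound $\hat{\mathbb{E}}(1_{\mathcal{E}_{lk}}\mathcal{J}_1^{\vec s})$ and $\hat{\mathbb{P}}(\mathcal{E}_{lk}\cap(\mathcal{E}_{\hat l}\cap\mathcal{E}_{\hat k})^c)$.

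The second quantity is easy to handle. On this event, some coordinate of $\hat l$ or $\hat k$ meets a coordinate of another group, in addition to the $l$--$k$ collision. Thus we get either two collisions between disjoint pairs of groups, as in $\mathcal{E}_{lk,\hat l\hat k}$, or two collisions sharing a group, as in $\mathcal{E}_{lkj}$. The integrated probabilities of both are $O((b-a)^2)$ by exactly the case analysis in Lemma \ref{lemma 3.9}. That analysis uses independence plus Lemma \ref{lemma 3.7} for disjoint pairs, and Lemma \ref{lemma 3.8} together with the splitting according to $\tilde\varphi_1,\tilde\varphi_2,\tilde\varphi_3$ for shared groups. Since $|\mathcal{J}|\le 16\|H\|_\infty^4$, this error is $O((b-a)^2)$.

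For the main term, let $\mathcal{J}_2^{\vec s}$ be the fully factorized version: the product of $\mathbb{E}_{\mu_p}[\prod_{j\in\{l,k\}}(\cdots)]$ with the separate $\mu_p$-expectations of the $\hat l$ and $\hat k$ factors. Since $\mu_p$ is a product measure, $\mathcal{J}_1^{\vec s}=\mathcal{J}_2^{\vec s}$ whenever the position sets of the $\hat l$ group, the $\hat k$ group and the $(l,k)$ groups at time $s_1$ are pairwise disjoint. Any overlap at time $s_1$ implies an event $A_{\hat l,\cdot,x,y}^{\vec s}$ or $A_{\hat k,\cdot,x,y}^{\vec s}$, with the convention that the hat walks are compared with the independent copies. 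Intersected with $\mathcal{E}_{lk}$, this is again a double-collision event bounded by $O((b-a)^2)$ as above.

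Finally, $\hat{\mathbb{E}}(1_{\mathcal{E}_{lk}}\mathcal{J}_2^{\vec s})=0$. The $\hat l$ walks are independent of $1_{\mathcal{E}_{lk}}$ and of the other factors. Moreover, by Proposition \ref{proposition 3.2 dual equivalent}, $\hat{\mathbb{E}}\,\mathbb{E}_{\mu_p}H(\eta_0,\mathcal{X}_{s_1,\hat l}^{s_1-s_{\hat l},x_1},\ldots)=\mathbb{E}_{\mu_p}F(\eta_{s_{\hat l}})$, so the averaged $\hat l$ factor vanishes.

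I expect the main obstacle to be the bookkeeping in the two comparison steps. One must check that every discrepancy event (between $\mathcal{J}$ and $\mathcal{J}_1$, and between $\mathcal{J}_1$ and $\mathcal{J}_2$) really contains a second collision of the form covered by Lemma \ref{lemma 3.7} or Lemma \ref{lemma 3.8}. One must also check that the integration variables then line up so that two of the four time integrals are absorbed by the $C_3/C_2$ bounds. In the shared-group case, in particular, I would reuse the erasure and independent-copy device from Lemma \ref{lemma 3.9} verbatim.
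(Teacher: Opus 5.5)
Your proposal is correct and follows the paper's proof essentially step for step. Your $\mathcal{J}_2^{\vec s}$ is the paper's $\mathcal{R}^{\vec s}$, with your $\{l,k\}$ factor playing the role of $\mathcal{U}^{\vec s}$. Your error event $\mathcal{E}_{lk}\cap(\mathcal{E}_{\hat l}\cap\mathcal{E}_{\hat k})^c$ is the paper's $\mathcal{C}^{\vec s}\subseteq\mathcal{E}_{lk,\hat l\hat k}\cup\mathcal{E}_{lk\hat l}\cup\mathcal{E}_{lk\hat k}$, controlled by the argument of Lemma \ref{lemma 3.9}, and your vanishing of $\hat{\mathbb{E}}(1_{\mathcal{E}_{lk}}\mathcal{J}_2^{\vec s})$ by independence plus duality is exactly \eqref{equ 3.22}. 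The only difference is the intermediate $\mathcal{J}_1^{\vec s}$: both of its discrepancy events lie in that same set, so your two comparison steps collapse into the paper's single one.
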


\proof
We only deal with the case where $l=1, k=2$ since other cases follow from similar arguments. On the event $\mathcal{E}_{12, 3, 4}^{\vec{s}}$,
\begin{align*}
\mathcal{J}^{\vec{s}}
&=\mathbb{E}_{\mu_p}\left(\prod_{j=1}^2\left(H(\eta_0, \mathcal{X}_{s_1}^{s_1-s_j, x_1}, \ldots,
\mathcal{X}_{s_1}^{s_1-s_j, x_m})-\mathbb{E}_{\mu_p}F(\eta_{s_j})\right)\right)
\\
&\text{\quad}\times\prod_{j=3}^4\mathbb{E}_{\mu_p}\left(H(\eta_0, \mathcal{X}_{s_1, j}^{s_1-s_j, x_1}, \ldots,
\mathcal{X}_{s_1, j}^{s_1-s_j, x_m})-\mathbb{E}_{\mu_p}F(\eta_{s_j})\right)
\end{align*}
and there exists a random variable $\mathcal{U}^{\vec{s}}$ measurable with respect to
\[
\left\{\mathcal{X}_{r, 1}^{0, x_1}, \ldots,
\mathcal{X}_{r, 1}^{0, x_m}, \mathcal{X}_{r, 2}^{s_1-s_2, x_1}, \ldots, \mathcal{X}_{r, 2}^{s_1-s_2, x_m}\right\}_{0\leq r\leq s_1}
\]
such that
\[
\mathbb{E}_{\mu_p}\left(\prod_{j=1}^2\left(H(\eta_0, \mathcal{X}_{s_1}^{s_1-s_j, x_1}, \ldots,
\mathcal{X}_{s_1}^{s_1-s_j, x_m})-\mathbb{E}_{\mu_p}F(\eta_{s_j})\right)\right)=\mathcal{U}^{\vec{s}}
\]
on the event $\mathcal{E}_{12, 3, 4}^{\vec{s}}$.

For simplicity, we denote by $\mathcal{R}^{\vec{s}}$ the term
\[
\mathcal{U}^{\vec{s}}\prod_{j=3}^4\mathbb{E}_{\mu_p}\left(H(\eta_0, \mathcal{X}_{s_1, j}^{s_1-s_j, x_1}, \ldots,
\mathcal{X}_{s_1, j}^{s_1-s_j, x_m})-\mathbb{E}_{\mu_p}F(\eta_{s_j})\right).
\]
We denote by $\mathcal{C}^{\vec{s}}$ the set
\[
\mathcal{E}_{12}^{\vec{s}}\setminus \mathcal{E}_{12, 3, 4}^{\vec{s}},
\]
then
\[
\mathcal{C}^{\vec{s}}\subseteq \mathcal{E}_{12, 34}^{\vec{s}}\bigcup \mathcal{E}_{123}^{\vec{s}} \bigcup \mathcal{E}_{124}^{\vec{s}}.
\]
Therefore, according to an argument similar to that leading to Lemma \ref{lemma 3.9},
\[
\int_a^bds_1\int_a^{s_1}ds_2\int_a^{s_2}ds_3\int_a^{s_3}\left|\hat{\mathbb{E}}\left(1_{\mathcal{C}^{\vec{s}}}\mathcal{R}^{\vec{s}}\right)\right|ds_4
=O(1)(b-a)^2.
\]
Hence, to complete the proof, we only need to show that
\begin{equation*}
\int_a^bds_1\int_a^{s_1}ds_2\int_a^{s_2}ds_3\int_a^{s_3}\left|\hat{\mathbb{E}}\left(1_{\mathcal{E}_{12}^{\vec{s}}}\mathcal{R}^{\vec{s}}\right)\right|ds_4
=O(1)(b-a)^2.
\end{equation*}
Actually, we have a stronger conclusion that
\begin{equation}\label{equ 3.22}
\hat{\mathbb{E}}\left(1_{\mathcal{E}_{12}^{\vec{s}}}\mathcal{R}^{\vec{s}}\right)=0.
\end{equation}
Note that $1_{\mathcal{E}_{12}^{\vec{s}}}\mathcal{U}^{\vec{s}}$
is measurable with respect to
\[
\mathcal{X}_{\cdot, 1}^{0, x_1}, \ldots,
\mathcal{X}_{\cdot, 1}^{0, x_m}, \mathcal{X}_{\cdot, 2}^{s_1-s_2, x_1}, \ldots, \mathcal{X}_{\cdot, 2}^{s_1-s_2, x_m}
\]
and hence is independent of
\[
\mathcal{X}_{s_1, 3}^{s_1-s_3, x_1}, \ldots,
\mathcal{X}_{s_1, 3}^{s_1-s_3, x_m}, \mathcal{X}_{s_1, 4}^{s_1-s_4, x_1}, \ldots, \mathcal{X}_{s_1, 4}^{s_1-s_4, x_m}.
\]
Therefore, by Proposition \ref{proposition 3.2 dual equivalent},
\begin{align*}
\hat{\mathbb{E}}\left(1_{\mathcal{E}_{12}^{\vec{s}}}\mathcal{R}^{\vec{s}}\right)
&=\hat{\mathbb{E}}\left(1_{\mathcal{E}_{12}^{\vec{s}}}\mathcal{U}^{\vec{s}}\right)
\prod_{j=3}^4\hat{\mathbb{E}}\mathbb{E}_{\mu_p}\left(H(\eta_0, \mathcal{X}_{s_1, j}^{s_1-s_j, x_1}, \ldots,
\mathcal{X}_{s_1, j}^{s_1-s_j, x_m})-\mathbb{E}_{\mu_p}F(\eta_{s_j})\right)\\
&=\hat{\mathbb{E}}\left(1_{\mathcal{E}_{12}^{\vec{s}}}\mathcal{U}^{\vec{s}}\right)
\prod_{j=3}^4\left(\mathbb{E}_{\mu_p}F(\eta_{s_j})-\mathbb{E}_{\mu_p}F(\eta_{s_j})\right)=0.
\end{align*}
Since \eqref{equ 3.22} holds, the proof is complete.
\qed

Next, we have the following lemma.
\begin{lemma}\label{lemma 3.11}
There exists $C_8<+\infty$ independent of $a, b$ such that
\begin{align*}
\int_a^bds_1\int_a^{s_1}ds_2\int_a^{s_2}ds_3\int_a^{s_3}\left|\hat{\mathbb{E}}\left(1_{\mathcal{E}_{1, 2, 3, 4}^{\vec{s}}}\mathcal{J}^{\vec{s}}\right)\right|ds_4
\leq C_8(b-a)^2
\end{align*}
for any $0<a<b$.
\end{lemma}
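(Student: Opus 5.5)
On the event $\mathcal{E}_{1,2,3,4}^{\vec{s}}$ no active walk of one of the four independent copies has met an active walk of another copy before time $s_1$. So at time $s_1$ the four families of positions coincide with those of the independent copies, and these families occupy pairwise disjoint sets of sites. Since $\mu_p$ is a product measure, on this event
\[
\mathcal{J}^{\vec{s}}=\prod_{j=1}^4 \mathcal{V}_j^{\vec{s}},\qquad \mathcal{V}_j^{\vec{s}}=\mathbb{E}_{\mu_p}\left(H(\eta_0, \mathcal{X}_{s_1, j}^{s_1-s_j, x_1}, \ldots,\mathcal{X}_{s_1, j}^{s_1-s_j, x_m})\right)-\mathbb{E}_{\mu_p}F(\eta_{s_j}).
\]
Each $\mathcal{V}_j^{\vec{s}}$ is measurable with respect to the $j$-th independent copy, and by Proposition \ref{proposition 3.2 dual equivalent} it satisfies $\hat{\mathbb{E}}\mathcal{V}_j^{\vec{s}}=0$.

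The plan is to write $1_{\mathcal{E}_{1,2,3,4}^{\vec{s}}}$ by inclusion--exclusion in terms of the pairwise events $\mathcal{E}_{lk}^{\vec{s}}$. The identity $1_{\mathcal{E}_{1,2,3,4}}=\prod_{l<k}(1-1_{\mathcal{E}_{lk}})$ expands into a sum over subsets $S$ of pairs $\{l,k\}$ of $(-1)^{|S|}\prod_{\{l,k\}\in S}1_{\mathcal{E}_{lk}}$. Rather than work with $\mathcal{J}^{\vec{s}}$ itself, I will work with the product $\mathcal{P}^{\vec{s}}=\prod_{j}\mathcal{V}_j^{\vec{s}}$, which agrees with $\mathcal{J}^{\vec{s}}$ on $\mathcal{E}_{1,2,3,4}^{\vec{s}}$. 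The empty set contributes $\hat{\mathbb{E}}\mathcal{P}^{\vec{s}}=\prod_j\hat{\mathbb{E}}\mathcal{V}_j^{\vec{s}}=0$, by independence of the four copies.

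A term with $|S|=1$, say $S=\{\{1,2\}\}$, gives $\hat{\mathbb{E}}(1_{\mathcal{E}_{12}}\mathcal{V}_1\mathcal{V}_2)\,\hat{\mathbb{E}}\mathcal{V}_3\,\hat{\mathbb{E}}\mathcal{V}_4=0$. This works because $\mathcal{E}_{12}$ depends only on copies $1$ and $2$, exactly as in the proof of \eqref{equ 3.22}. The same factorization kills every $S$ that leaves some index $j$ uncovered by its pairs: the factor $\mathcal{V}_j$ is then independent of everything else and has mean zero.

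The remaining terms are those in which every index lies in some pair of $S$ and $|S|\geq 2$. Such an $S$ contains either two disjoint pairs, or two pairs sharing an index, so the event $\bigcap_{S}\mathcal{E}_{lk}$ is contained in some event $\mathcal{E}_{lk}\cap\mathcal{E}_{\hat l\hat k}$ or $\mathcal{E}_{lk}\cap\mathcal{E}_{lj}$. These events are defined through the independent copies, and I will bound them as in the proof of Lemma \ref{lemma 3.9}, using $|\mathcal{P}^{\vec{s}}|\le 16\|H\|_\infty^4$. For disjoint pairs, independence together with Lemma \ref{lemma 3.7} gives the bound $(C_3/C_2)^2(b-a)^2$ after the fourfold time integration. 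For pairs sharing an index, Lemma \ref{lemma 3.8} and the case analysis on collision orders from the proof of Lemma \ref{lemma 3.9} give a bound of order $(C_3/C_2)^2(b-a)^2$. Here the independent copies are in fact simpler than the coupled walks there, because no erasure occurs, so only the split $x=z$ versus $x\neq z$ and the ordering of the collision times are needed. Summing the finitely many subsets $S$ and all choices of $x,y,z,w\in\{x_1,\dots,x_m\}$ yields the constant $C_8$.

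The main obstacle is the shared-index case. One must make sure that the events $A_{l,k,x,y}\cap A_{l,j,z,w}$ for independent copies, with $x\neq z$ inside copy $l$, are still controlled by Lemmas \ref{lemma 3.7}--\ref{lemma 3.8}. The coalescence between $\mathcal{X}^{x}_{\cdot,l}$ and $\mathcal{X}^{z}_{\cdot,l}$ inside copy $l$ is handled by the same $\tilde{\varphi}_1$ versus $\tilde{\varphi}_2,\tilde{\varphi}_3$ decomposition used in Lemma \ref{lemma 3.9}.
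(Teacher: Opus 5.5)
Your proposal is correct and follows essentially the paper's route. The paper writes $1_{\mathcal{E}_{1,2,3,4}^{\vec{s}}}=1-1_{(\mathcal{E}_{1,2,3,4}^{\vec{s}})^c}$ and uses $\hat{\mathbb{E}}\mathcal{V}^{\vec{s}}=0$. It then kills the single-collision events $\mathcal{E}_{lk,\hat{l},\hat{k}}^{\vec{s}}$ by the independence argument of \eqref{equ 3.22}, and bounds the multi-collision events $\mathcal{E}_{lkj}^{\vec{s}}$, $\mathcal{E}_{lk,\hat{l}\hat{k}}^{\vec{s}}$ by the probability estimates of Lemma \ref{lemma 3.9}. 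This is a truncated form of your inclusion--exclusion; your version is an exact identity, so it needs no care about the overlaps in the paper's cover of the complement.

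One small correction: the events in the proof of Lemma \ref{lemma 3.9} are already defined through the independent copies, and the ``erasure'' there is the coalescence of $\mathcal{X}_{\cdot,l}^{s_1-s_l,x}$ and $\mathcal{X}_{\cdot,l}^{s_1-s_l,z}$ inside copy $l$. So your shared-index case is not simpler than that one but identical to it, which is fine, since you invoke the same $\tilde{\varphi}$ decomposition.
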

\proof
On the event $\mathcal{E}_{1, 2, 3, 4}$,
\[
\mathcal{J}^{\vec{s}}=\prod_{j=1}^4\mathbb{E}_{\mu_p}\left(H(\eta_0, \mathcal{X}_{s_1, j}^{s_1-s_j, x_1}, \ldots,
\mathcal{X}_{s_1, j}^{s_1-s_j, x_m})-\mathbb{E}_{\mu_p}F(\eta_{s_j})\right).
\]
For simplicity, we denote by $\mathcal{V}^{\vec{s}}$ the term
\[
\prod_{j=1}^4\mathbb{E}_{\mu_p}\left(H(\eta_0, \mathcal{X}_{s_1, j}^{s_1-s_j, x_1}, \ldots,
\mathcal{X}_{s_1, j}^{s_1-s_j, x_m})-\mathbb{E}_{\mu_p}F(\eta_{s_j})\right).
\]
Let $(\mathcal{E}_{1, 2, 3, 4}^{\vec{s}})^c$ be the complement of $\mathcal{E}_{1, 2, 3, 4}^{\vec{s}}$, then
\[
(\mathcal{E}_{1, 2, 3, 4}^{\vec{s}})^c
\subseteq \bigcup_{l,k,j}\mathcal{E}_{lkj}\bigcup\bigcup_{l, k}\mathcal{E}_{lk, \hat{l}\hat{k}}\bigcup\bigcup_{l, k}\mathcal{E}_{lk, \hat{l}, \hat{k}}.
\]
According to an argument similar to that leading to Lemma \ref{lemma 3.9},
\[
\int_a^bds_1\int_a^{s_1}ds_2\int_a^{s_2}ds_3\int_a^{s_3}\left|\hat{\mathbb{E}}\left(1_{\mathcal{E}^{\vec{s}}_{lk, \hat{l}\hat{k}}}\mathcal{V}^{\vec{s}}\right)\right|ds_4=O(1)(b-a)^2
\]
and
\[
\int_a^bds_1\int_a^{s_1}ds_2\int_a^{s_2}ds_3\int_a^{s_3}\left|\hat{\mathbb{E}}\left(1_{\mathcal{E}_{lkj}^{\vec{s}}}\mathcal{V}^{\vec{s}}\right)\right|ds_4
=O(1)(b-a)^2.
\]
According to an argument similar to that leading to Lemma \ref{lemma 3.10},
\[
\int_a^bds_1\int_a^{s_1}ds_2\int_a^{s_2}ds_3\int_a^{s_3}\left|\hat{\mathbb{E}}\left(1_{\mathcal{E}_{lk, \hat{l}, \hat{k}}^{\vec{s}}}\mathcal{V}^{\vec{s}}\right)\right|ds_4=O(1)(b-a)^2.
\]
Therefore,
\[
\int_a^bds_1\int_a^{s_1}ds_2\int_a^{s_2}ds_3\int_a^{s_3}\left|\hat{\mathbb{E}}\left(1_{(\mathcal{E}_{1, 2, 3, 4}^{\vec{s}})^c}\mathcal{V}^{\vec{s}}\right)\right|ds_4=O(1)(b-a)^2.
\]
Hence, to complete this proof, we only need to show that
\[
\int_a^bds_1\int_a^{s_1}ds_2\int_a^{s_2}ds_3\int_a^{s_3}\left|\hat{\mathbb{E}}\left(\mathcal{V}^{\vec{s}}\right)\right|ds_4=O(1)(b-a)^2.
\]
Actually, according to an analysis similar to that leading to \eqref{equ 3.22}, we have the stronger conclusion that
\[
\int_a^bds_1\int_a^{s_1}ds_2\int_a^{s_2}ds_3\int_a^{s_3}\left|\hat{\mathbb{E}}\left(\mathcal{V}^{\vec{s}}\right)\right|ds_4=0
\]
and hence the proof is complete.
\qed

Now we prove Theorem \ref{theorem 2.3 main} in the case where $\eta_0$ is distributed with $\nu_p$.
\proof[Proof of Theorem \ref{theorem 2.3 main} in the case where $\eta_0$ is distributed with $\nu_p$]
Since
\[
\mathcal{E}_{1, 2, 3, 4}\bigcup\bigcup_{l,k,j}\mathcal{E}_{lkj}\bigcup\bigcup_{l, k}\mathcal{E}_{lk, \hat{l}\hat{k}}\bigcup\bigcup_{l, k}\mathcal{E}_{lk, \hat{l}, \hat{k}}
\]
is the universal set, according to Lemmas \ref{lemma 3.9}-\ref{lemma 3.11},
\begin{equation}\label{equ 3.23 two}
\int_{sN}^{tN}ds_1\int_{sN}^{s_1}ds_2\int_{sN}^{s_2}ds_3\int_{sN}^{s_3}\left|\hat{\mathbb{E}}\left(\mathcal{J}^{\theta+\vec{s}}\right)\right|ds_4
\leq C_9(t-s)^2N^2
\end{equation}
for any $0\leq s\leq t$ and $\theta>0$, where $\theta+\vec{s}=(s_1+\theta, s_2+\theta, s_3+\theta, s_4+\theta)$ and $C_9<+\infty$ is a constant independent of $s, t, N, \theta$. Let $\theta\rightarrow+\infty$ in \eqref{equ 3.23 two}, then by Proposition \ref{proposition 3.2 dual equivalent},
\begin{equation}\label{equ 3.23}
\mathbb{E}_{\nu_p}\left(\left(\frac{1}{\sqrt{N}}\xi_{tN}^F-\frac{1}{\sqrt{N}}\xi_{sN}^F\right)^4\right)\leq 24C_9(t-s)^2
\end{equation}
for any $0\leq s\leq t$. By \eqref{equ 3.23} and Corollary 14.9 of \cite{Kallenberg1997}, the sequence
\[
\left\{\frac{1}{\sqrt{N}}\xi_{tN}^F:~0\leq t\leq T\right\}_{N\geq 1}
\]
are tight with respect to the uniform topology of $C[0, T]$. Theorem \ref{theorem 2.3 main} follows from the aforesaid tightness and Corollary \ref{corollary 3.6}.
\qed


At last, we give an outline of how to prove Theorem \ref{theorem 2.3 main} when $\eta_0$ is distributed with $\mu_p$. Note that, from now on
$\xi_t^F=\int_0^t\left(F(\eta_s)-\mathbb{E}_{\mu_p}F(\eta_s)\right)ds$. Let $G^N$ be defined as in \eqref{equ 3.11 two}. We further define
\[
\tilde{G}^N(t, \eta)=G^N(\eta)-\mathbb{E}_{\mu_p}G^N(\eta_t).
\]
By \eqref{equ 3.11 two},
\[
M_t^N+C_{10}(t)=\tilde{G}^N(t, \eta_t)-\tilde{G}^N(0, \eta_0)-\int_0^t\left(S(N)F(\eta_s)-\mathbb{E}_{\mu_p}S(N)F(\eta_s)\right)ds+\xi_t^F,
\]
where
\[
C_{10}(t)=-\mathbb{E}_{\mu_p}G^N(\eta_t)+\mathbb{E}_{\mu_p}G^N(\eta_0)+\int_0^t\mathbb{E}_{\mu_p}S(N)F(\eta_s)ds-\int_0^t\mathbb{E}_{\mu_p}F(\eta_s)ds.
\]
Since the means of $M_t^N, \tilde{G}^N(t, \eta_t), S(N)F(\eta_s)-\mathbb{E}_{\mu_p}S(N)F(\eta_s), \xi_t^F$ under $\mu_p$ are all $0$, we have $C_{10}(t)=0$ and hence
\begin{align}\label{equ 3.25}
\frac{1}{\sqrt{N}}M_{tN}^N=&\frac{1}{\sqrt{N}}\tilde{G}^N(tN, \eta_{tN})-\frac{1}{\sqrt{N}}\tilde{G}^N(0, \eta_0)\notag\\
&-\frac{1}{\sqrt{N}}
\int_0^{tN}\left(S(N)F(\eta_s)-\mathbb{E}_{\mu_p}S(N)F(\eta_s)\right)ds+\frac{1}{\sqrt{N}}\xi_{tN}^F,
\end{align}
which is an analogue of \eqref{equ 3.12}. We have the following analogue of Lemma \ref{lemma 3.4}.

\begin{lemma}\label{lemma 3.12}
Let $\eta_0$ be distributed with $\mu_p$.
For any $t\geq 0$,
\begin{align*}
&\lim_{N\rightarrow+\infty}\Big(\frac{1}{\sqrt{N}}\tilde{G}^N(tN, \eta_{tN})-\frac{1}{\sqrt{N}}\tilde{G}^N(0, \eta_0)\\
&\text{\quad\quad\quad}-\frac{1}{\sqrt{N}}
\int_0^{tN}\left(S(N)F(\eta_s)-\mathbb{E}_{\mu_p}S(N)F(\eta_s)\right)ds\Big)=0
\end{align*}
in $L^2$.
\end{lemma}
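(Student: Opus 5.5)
Each of the three terms has mean zero under $\mu_p$, so the plan is to show that each has variance $o(N)$, mirroring the proof of Lemma \ref{lemma 3.4}. The change is that $\nu_p$ is no longer available and $\mu_p$ is not invariant. We therefore need uniform-in-time covariance bounds under $\mu_p$, and Lemma \ref{lemma 3.zero} supplies them directly: it holds for every $t,r\ge 0$ under $\mu_p$ without passing to a limit. This makes the argument actually cleaner than in the $\nu_p$ case.

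\textbf{Boundary terms.} For $\tilde G^N(u,\eta_u)$ with $u\in\{0,tN\}$, expand as in Lemma \ref{lemma 3.4}:
\[
{\rm Var}_{\mu_p}\bigl(G^N(\eta_u)\bigr)=\sum_{\vec y,\vec z}{\rm Cov}_{\mu_p}\bigl(H(\eta_u,\vec y),H(\eta_u,\vec z)\bigr)\int_0^N\!\!\int_0^N\beta_{s_1}(\vec x,\vec y)\beta_{s_2}(\vec x,\vec z)\,ds_1ds_2 .
\]
Apply Lemma \ref{lemma 3.zero} with $r=0$ and time $u$ to bound the covariance by $\frac{2\|H\|_\infty^2}{C_2}\sum_{i,j}\int_0^\infty p_s(y_i,z_j)\,ds$, uniformly in $u$. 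Then sum against $\beta_{s_1}\beta_{s_2}$ using the marginal identity $\sum\beta_s(\vec x,\vec y)1_{\{y_i=\cdot\}}=p_s(x_i,\cdot)$ together with symmetry of $p$. This yields exactly the bound $O(1)\sum_{i,j}\int_0^N\bigl(\int\!\!\int p_{s+s_1+s_2}(x_i,x_j)\,ds\,ds_2\bigr)ds_1$ from before. Dividing by $N$ and using \eqref{equ 3.15} (a Cesàro argument from Assumption \eqref{Assumption 1.6}), this tends to $0$.

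\textbf{Integral term.} Write
\[
{\rm Var}_{\mu_p}\Bigl(\int_0^{tN}S(N)F(\eta_s)\,ds\Bigr)=2\int_0^{tN}ds_1\int_0^{s_1}\sum_{\vec y,\vec z}\beta_N(\vec x,\vec y)\beta_N(\vec x,\vec z)\,{\rm Cov}_{\mu_p}\bigl(H(\eta_{s_1},\vec y),H(\eta_{s_2},\vec z)\bigr)\,ds_2 .
\]
Lemma \ref{lemma 3.zero}, applied with $t=s_2$ and $r=s_1-s_2$, bounds the covariance by $O(1)\sum_{i,j}\int_0^\infty p_{v+s_1-s_2}(y_i,z_j)\,dv$. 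Summing against $\beta_N\beta_N$ gives $p_{2N+v+s_1-s_2}(x_i,x_j)$. Integrating in $s_2$ and then $s_1$ gives at most $O(1)\,tN\int_N^\infty\!\int_N^\infty p_{v+w}(x_i,x_j)\,dv\,dw$, which is $o(N)$ by \eqref{equ 3.16}.

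The three variances are all $o(N)$, so the $L^2$ norm of the combination, divided by $\sqrt N$, tends to $0$ by Minkowski's inequality.

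\textbf{Main obstacle.} The step needing care is the nonstationary two-time covariance. In the $\nu_p$ case we reduced to ${\rm Cov}(\cdot_{s},\cdot_0)$ by invariance; here we must check that the bound in Lemma \ref{lemma 3.zero} depends only on the lag $r$ and not on the base time $t$. It does, since the proof there used only $\hat{\mathbb P}(\hat\tau<t)\le\hat{\mathbb P}(\hat\tau<\infty)$.

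A minor point is that Lemma \ref{lemma 3.zero} is stated with the same $m$ and local sites for both functions. Here both functions are $H$ evaluated at arbitrary site vectors $\vec y,\vec z$, so it applies verbatim.
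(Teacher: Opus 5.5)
Your proposal is correct and is essentially the paper's own proof. Both arguments show that each of the three mean-zero terms has variance $o(N)$: the two-time covariances are bounded by Lemma \ref{lemma 3.zero}, which is uniform in the base time under $\mu_p$; the $\beta$-sums collapse into $p_{s+s_1+s_2}$ and $p_{2N+v+w}$ by symmetry; and Assumption \eqref{Assumption 1.6} finishes it. The paper only sketches this, and your version fills in the details cleanly, including the factor $t$ from integrating over $[0,tN]$ that the paper's displayed bound for the integral term omits.
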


\proof
According to an argument similar to that given in the proof of Lemma \ref{lemma 3.4}, we have
\begin{align*}
{\rm Var}_{\mu_p}\left(\frac{1}{\sqrt{N}}\tilde{G}^N(t, \eta_t)\right)\leq \frac{2\|H\|_\infty^2}{NC_2}\sum_{i=1}^m\sum_{j=1}^m\int_0^Nds_1\left(\int_0^{+\infty}\int_0^{+\infty}p_{s_1+s_2+\theta}(x_i, x_j)ds_2d\theta\right)
\end{align*}
and
\begin{align*}
{\rm Var}_{\mu_p}\left(\frac{1}{\sqrt{N}}\int_0^tS(N)F(\eta_s)ds\right)
\leq \frac{4\|H\|_\infty^2}{C_2}\sum_{i=1}^m\sum_{j=1}^m\int_0^{+\infty}\int_0^{+\infty}p_{2N+v+s_2}(x_i, x_j)dvds_2
\end{align*}
for any $t\geq 0$. Lemma \ref{lemma 3.12} follows from the above two inequalities and Assumption \eqref{Assumption 1.6}.
\qed

Then we have the following analogue of Lemma \ref{lemma 3.5}.
\begin{lemma}\label{lemma 3.13}
Let $T>0$ and $\eta_0$ be distributed with $\mu_p$. As $N\rightarrow+\infty$, the process
\[
\left\{\frac{1}{\sqrt{N}}M_{tN}^N:~0\leq t\leq T\right\}
\]
converges weakly, with respect to the Skorohod topology of $D[0, T]$, to $\{\sigma_FB_t\}_{0\leq t\leq T}$.
\end{lemma}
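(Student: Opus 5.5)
We follow the proof of Lemma \ref{lemma 3.5} essentially verbatim, replacing $\nu_p$ by $\mu_p$. By Theorem 1.4 in Chapter 7 of \cite{Ethier1986}, it suffices to show that $\frac{1}{N}\langle M^N\rangle_{tN}\to\sigma_F^2 t$ in probability for every $t\geq 0$. As before, this reduces to two statements:
\[
\lim_{N\rightarrow+\infty}\frac{1}{N}\mathbb{E}_{\mu_p}\langle M^N\rangle_{tN}=\sigma_F^2 t
\quad\text{and}\quad
\lim_{N\rightarrow+\infty}{\rm Var}_{\mu_p}\left(\frac{1}{N}\langle M^N\rangle_{tN}\right)=0 .
\]
The jump sizes of $M^N/\sqrt{N}$ also need to vanish. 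Since $|G^N(\eta^{w,v})-G^N(\eta)|$ is bounded by $2\|H\|_\infty\sum_i\int_0^{+\infty}p_s(x_i,w)ds\leq 2m\|H\|_\infty C_1$ uniformly in $N$, this condition holds automatically.

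\textbf{First moment.} We use $\mathbb{E}_{\mu_p}\langle M^N\rangle_{tN}={\rm Var}_{\mu_p}(M^N_{tN})$ together with the decomposition \eqref{equ 3.25}. By Lemma \ref{lemma 3.12}, the error terms in \eqref{equ 3.25} vanish in $L^2$ after dividing by $\sqrt N$. Hence ${\rm Var}_{\mu_p}(M^N_{tN})/N$ has the same limit as ${\rm Var}_{\mu_p}(\xi^F_{tN})/N$, which equals $\sigma_F^2t$ by Lemma \ref{lemma 3.3}, applied with $tN$ in place of $t$. Lemma \ref{lemma 3.12} is stated for fixed $t$ and arbitrary $N$, so this step only combines $L^2$ convergence with the Cauchy--Schwarz inequality.

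\textbf{Second moment.} We write $\langle M^N\rangle_{tN}$ as in the proof of Lemma \ref{lemma 3.5}, expanding the square via \eqref{equ 3.19}. This gives a sum over $w,v,\vec y,\vec z$ of $q(w,v)\int_0^N\int_0^N\beta_{s_1}(\vec x,\vec y)\beta_{s_2}(\vec x,\vec z)ds_1ds_2$ times $\int_0^{tN}\tilde H(\eta_s,\vec y,\vec z,w,v)ds$. The weights are summable uniformly in $N$, with total mass bounded by $C_5\sum_{i,j}\int\int p_{s_1+s_2}(x_i,x_j)$, which is finite by Assumption \eqref{Assumption 1.6}. Expanding the variance bilinearly and applying dominated convergence, it suffices to show that for fixed indices,
\[
\frac{1}{N^2}\int_0^{tN}\int_0^{tN}\left|{\rm Cov}_{\mu_p}\left(\tilde H(\eta_{r_1},\vec y,\vec z,w,v),\tilde H(\eta_{r_2},\vec u,\vec h,\tilde w,\tilde v)\right)\right|dr_1dr_2\to 0 .
\]
This is the one place where the argument really uses $\mu_p$ rather than $\nu_p$. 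Lemma \ref{lemma 3.zero} is already stated under $\mu_p$ for arbitrary $t,r\geq 0$, with a bound that depends only on the time lag $|r_1-r_2|$. That bound is $\frac{2\|\tilde H\|_\infty^2}{C_2}\sum_{i,j}\int_0^{+\infty}p_{\theta+|r_1-r_2|}(\varsigma_i,\varpi_j)d\theta$, which tends to $0$ as $|r_1-r_2|\to\infty$ by Assumption \eqref{Assumption 1.6}. Inside the double integral, the covariance is bounded by $4\|\tilde H\|_\infty^2$ everywhere and is small once the lag is large. The region where the lag is at most $L$ has measure $O(LN)=o(N^2)$, so the normalized integral tends to $0$. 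Interchanging the limit with the sum over indices is justified by the uniform summable weights and dominated convergence.

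\textbf{Main obstacle.} The delicate point is the bookkeeping in the first-moment step. We need the identity ${\rm Var}(M^N_{tN})=\mathbb{E}\langle M^N\rangle_{tN}$ under the non-stationary initial law $\mu_p$, which holds because $M^N_0=0$. We also need to ensure that the time-dependent centering in $\tilde G^N$ introduces no extra deterministic drift; this is exactly the fact $C_{10}(t)=0$ established above.
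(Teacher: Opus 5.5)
Your proposal is correct and follows the paper's proof. You reduce, via the martingale central limit theorem, to the mean and variance of $\frac{1}{N}\langle M^N\rangle_{tN}$. You obtain the mean from \eqref{equ 3.25} together with Lemmas \ref{lemma 3.12} and \ref{lemma 3.3}, and the variance from the expansion \eqref{equ 3.19} combined with the lag-only covariance bound of Lemma \ref{lemma 3.zero}, which holds directly under $\mu_p$. Your extra check that the jumps of $M^N/\sqrt{N}$ are at most $2m\|H\|_\infty C_1/\sqrt{N}$ verifies a hypothesis of the martingale central limit theorem that the paper leaves implicit.
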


\proof
As in the proof of Lemma \ref{lemma 3.5}, we only need to show that
\begin{equation}\label{equ 3.26}
\lim_{N\rightarrow+\infty}\frac{1}{N}\mathbb{E}_{\mu_p}\langle M^N\rangle_{tN}=\sigma_F^2 t
\end{equation}
and
\begin{equation}\label{equ 3.27}
\lim_{N\rightarrow+\infty}{\rm Var}_{\mu_p}\left(\frac{1}{N}\langle M^N\rangle_{tN}\right)=0.
\end{equation}
Equation \eqref{equ 3.26} follows from Equation \eqref{equ 3.25},  Lemmas \ref{lemma 3.3}, \ref{lemma 3.12} and the fact that
\[
\mathbb{E}_{\mu_p}\langle M^N\rangle_{tN}=\mathbb{E}_{\mu_p}\left(\left(M^N_{tN}\right)^2\right).
\]
Equation \eqref{equ 3.27} follows from an argument similar to that leading to \eqref{equ 3.18}.
\qed

By Lemmas \ref{lemma 3.12}, \ref{lemma 3.13} and Equation \eqref{equ 3.25}, we have the following analogue of Corollary \ref{corollary 3.6}.

\begin{corollary}\label{corollary 3.14}
Let $\eta_0$ be distributed with $\mu_p$. For any $0<t_1<t_2<\ldots<t_k$, the $\mathbb{R}^k$-valued random variable
\[
\left(\frac{1}{\sqrt{N}}\xi_{t_1N}^F, \frac{1}{\sqrt{N}}\xi_{t_2N}^F, \ldots, \frac{1}{\sqrt{N}}\xi_{t_kN}^F\right)
\]
converges weakly to $\sigma_F\left(B_{t_1}, B_{t_2}, \ldots, B_{t_k}\right)$ as $N\rightarrow+\infty$.
\end{corollary}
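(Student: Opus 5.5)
The plan is to mirror the derivation of Corollary \ref{corollary 3.6} from Lemmas \ref{lemma 3.4} and \ref{lemma 3.5}, now using the decomposition \eqref{equ 3.25} together with Lemmas \ref{lemma 3.12} and \ref{lemma 3.13}.

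First, rewrite \eqref{equ 3.25} as
\[
\frac{1}{\sqrt{N}}\xi_{tN}^F=\frac{1}{\sqrt{N}}M_{tN}^N+\mathcal{R}_t^N,
\]
where $\mathcal{R}_t^N$ collects the three error terms. By Lemma \ref{lemma 3.12}, $\mathcal{R}_t^N\to 0$ in $L^2$ for each fixed $t$. Applied at the finitely many times $t_1,\ldots,t_k$, this gives
\[
\left(\mathcal{R}_{t_1}^N,\ldots,\mathcal{R}_{t_k}^N\right)\to 0
\]
in $L^2$, hence in probability, as an $\mathbb{R}^k$-valued vector.

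Second, extract finite-dimensional convergence of the martingale part from Lemma \ref{lemma 3.13}. That lemma gives weak convergence of $\{\frac{1}{\sqrt{N}}M^N_{tN}\}_{0\leq t\leq T}$ in $D[0,T]$ with $T\geq t_k$ to $\sigma_F B$. The limit $\sigma_FB$ has continuous paths, so the projection $\omega\mapsto(\omega(t_1),\ldots,\omega(t_k))$ is continuous in the Skorohod topology at almost every limit path. The continuous mapping theorem then yields
\[
\frac{1}{\sqrt{N}}\left(M^N_{t_1N},\ldots,M^N_{t_kN}\right)\Rightarrow\sigma_F\left(B_{t_1},\ldots,B_{t_k}\right).
\]

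Third, combine the two steps with Slutsky's theorem: a sequence converging weakly plus a sequence converging to $0$ in probability converges weakly to the same limit. This gives the claim.

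The only point needing care is the second step. Coordinate projections are not continuous on all of $D[0,T]$, only at paths continuous at $t_i$. The limit is continuous almost surely, so this is harmless. All substantive work is already contained in Lemmas \ref{lemma 3.12} and \ref{lemma 3.13}, so I expect no real obstacle.
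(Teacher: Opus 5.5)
Your proposal is correct and follows the paper's route. The paper derives Corollary~\ref{corollary 3.14} in one line from the decomposition \eqref{equ 3.25} together with Lemmas~\ref{lemma 3.12} and \ref{lemma 3.13}. Your continuous-mapping step (taking $T\geq t_k$ and using path continuity of $\sigma_F B$) and your application of Slutsky's theorem just make explicit what the paper leaves implicit.
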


Now we prove Theorem \ref{theorem 2.3 main} in the case where $\eta_0$ is distributed with $\mu_p$.
\proof[Proof of Theorem \ref{theorem 2.3 main} in the case where $\eta_0$ is distributed with $\mu_p$]
By Equation \eqref{equ 3.23 two},
\begin{equation*}
\mathbb{E}_{\mu_p}\left(\left(\frac{1}{\sqrt{N}}\xi_{tN}^F-\frac{1}{\sqrt{N}}\xi_{sN}^F\right)^4\right)\leq 24C_9(t-s)^2
\end{equation*}
for any $0\leq s<t$. Consequently, when $\eta_0$ is distributed with $\mu_p$, the sequence
\[
\left\{\frac{1}{\sqrt{N}}\xi_{tN}^F:~0\leq t\leq T\right\}_{N\geq 1}
\]
are tight with respect to the uniform topology of $C[0, T]$. Theorem \ref{theorem 2.3 main} follows from the aforesaid tightness and Corollary \ref{corollary 3.14}.
\qed

\quad

\textbf{Acknowledgments.}
The author is grateful to financial
supports from the National Natural Science Foundation of China with grant number 12371142.

{}
\end{document}